\documentclass[11pt]{amsart}

\usepackage[utf8]{inputenc}
\usepackage{newpxtext}
\usepackage[T1]{fontenc}
\usepackage{lipsum}

\usepackage{amsmath}
\usepackage{amssymb}
\usepackage{amsthm}
\usepackage{mathtools}
\usepackage{xfrac}
\usepackage{tikz-cd}
\usepackage{cases}
\usepackage{tabularx}
\usepackage{enumitem}
\usepackage{stmaryrd}
\usepackage[scr=boondox]{mathalpha}

\usepackage{graphicx}
\usepackage[export]{adjustbox}
\usepackage[dvipsnames, table]{xcolor} 
\definecolor{Blue}{RGB}{0, 25, 61}
\definecolor{Brown}{RGB}{102, 12, 0}
\definecolor{Green}{RGB}{0, 51, 0}
\definecolor{Gray}{RGB}{120,120,120}
\definecolor{Cream}{RGB}{245,240,200}
\definecolor{blue}{RGB}{0,120,140}
\definecolor{red}{RGB}{140,20,0}
\definecolor{green}{RGB}{0,90,0}

\usepackage[a4paper, margin=1.7cm]{geometry}

\theoremstyle{plain}
\newtheorem{theorem}{Theorem}[section]
\newtheorem{proposition}[theorem]{Proposition}
\newtheorem{lemma}[theorem]{Lemma}
\newtheorem{corollary}[theorem]{Corollary}

\theoremstyle{definition}
\newtheorem{definition}[theorem]{Definition}
\newtheorem{example}[theorem]{Example}

\theoremstyle{remark}
\newtheorem{remark}[theorem]{Remark}

\usepackage{hyperref}
\hypersetup{
    colorlinks=true,
    linkcolor=Blue,
    citecolor=Green,
    filecolor=magenta,
    urlcolor=cyan
}

\let\oldtocsection=\tocsection
\let\oldtocsubsection=\tocsubsection
\let\oldtocsubsubsection=\tocsubsubsection
\renewcommand{\tocsection}[2]{\hspace{0em}\oldtocsection{#1}{#2}}
\renewcommand{\tocsubsection}[2]{\hspace{1.5em}\oldtocsubsection{#1}{#2}}
\renewcommand{\tocsubsubsection}[2]{\hspace{3em}\oldtocsubsubsection{#1}{#2}}

\usepackage[url=false]{biblatex}
\title{Morita equivalence of Nijenhuis structures}
\author{Andrés I. Rodríguez}
\address{Instituto de Matemática Pura e Aplicada, Estrada Dona Castorina 110, Rio de Janeiro, 22460-320, Brazil.}
\email{andres.rodriguez@impa.br}
\date{}

\begin{document}

\begin{abstract}
We introduce Morita equivalence for Nijenhuis groupoids and for their infinitesimal counterparts, establishing a global-to-infinitesimal correspondence under the Lie functor. A special case is that of holomorphic Lie groupoids and algebroids. We use our framework to enhance the known Morita equivalences for quasi-symplectic groupoids and Dirac structures with compatible Nijenhuis structures. Finally, subject to certain conditions, we prove that the modular class of Poisson-Nijenhuis manifolds is invariant under Morita equivalence.
\end{abstract}

\maketitle
\tableofcontents
\section{Introduction}

Nijenhuis structures provide a convenient setting for the study of integrability of several geometric structures. A key example is found in complex geometry, where a holomorphic structure on a manifold is equivalently described by an almost-complex structure satisfying the Nijenhuis condition (i.e., whose Nijenhuis torsion vanishes) \cite{newlander1957complex,frolicher1959differentialgeometrie}. Nijenhuis structures are also important in the theory of integrable systems, serving as a tool for the systematic construction of hierarchies of functions in involution \cite{magri1978simple}. More recently, the study of \textit{Nijenhuis geometry} as a subject in its own right has emerged (see, e.g. \cite{bolsinov2022nijenhuis,konyaev2021nijenhuis,bolsinov2024nijenhuis,bolsinov2022applications}).

Nijenhuis structures on Lie groupoids naturally arise e.g. in the study of holomorphic Lie groupoids \cite{laurent2009integration} and in the context of integration of Poisson-Nijenhuis manifolds to  symplectic-Nijenhuis groupoids \cite{stienon-xu2007poisson,drummond2022lie,das2019poisson}. 
The primary goal of this paper is to establish an appropriate notion of Morita equivalence for Nijenhuis groupoids. This is achieved by introducing a general notion of Morita equivalence for multiplicative (1,1)-tensor fields which is compatible with the Nijenhuis condition, so it restricts to multiplicative Nijenhuis structures. This paves the way for the study of the Morita equivalence of more general \textit{quasi-Nijenhuis} groupoids, a topic to be addressed in a future work.

We also develop a parallel infinitesimal  theory for (1,1)-tensor fields on Lie algebroids, and in particular for Nijenhuis tensor fields. This leads to a notion of Morita equivalence for infinitesimal Nijenhuis structures, which is directly shown to correspond to the Morita equivalence of their integrations on source-simply connected Lie groupoids.

This framework finds a natural application in Poisson geometry. A compatibility between Poisson and Nijenhuis structures was established by Magri \cite{magri1978simple} in the context of bi-Hamiltonian integrable systems. This compatibility  was later generalized to \textit{Dirac--Nijenhuis structures} by Bursztyn, Drummond and Netto \cite{bursztyn-drummond-netto2022dirac}, and integrated to its global counterpart, \textit{quasi-symplectic--Nijenhuis groupoids}. We show that our definition of Morita equivalence of Nijenhuis structures preserves compatibility with these structures, at both the global and infinitesimal levels. This allows us to define a consistent notion of Morita equivalence for Dirac--Nijenhuis structures and quasi-symplectic--Nijenhuis groupoids. In particular, this encompasses the case of holomorphic presymplectic groupoids and holomorphic Dirac structures. 

We investigate the compatibility between Morita equivalence and the hierarchies of geometric structures naturally induced by Poisson--Nijenhuis and Dirac--Nijenhuis manifolds. Under suitable assumptions, we demonstrate that Morita equivalence persists throughout these hierarchies. As a cohomological consequence, we examine the intrinsic modular vector fields of Poisson--Nijenhuis manifolds introduced by Damianou and Fernandes \cite{damianou-fernandes2008integrable}. Recall that the standard modular class of a Poisson manifold, which measures the obstruction to the existence of an invariant measure under Hamiltonian flows, is preserved under Morita equivalence \cite{crainic2003differentiable}. In the enriched setting, the Poisson cohomology class of the intrinsic modular vector field serves as an additional modular class; its vanishing is the precise condition for the vector field to be bi-Hamiltonian. In Theorem \ref{thm:modular-vectors}, we establish that this modular class is also invariant under our introduced notion of Morita equivalence. Specifically, if two Poisson--Nijenhuis manifolds $(M_{1}, \pi_{1}, r_{1})$ and $(M_{2}, \pi_{2}, r_{2})$ are Morita equivalent via a non-degenerate infinitesimal bibundle, then their respective modular classes $[\mathscr{Y}_{r_{1}}]$ and $[\mathscr{Y}_{r_{2}}]$ correspond under the induced isomorphism between the first Poisson cohomology groups of $\big(M_{1}, \pi_{1}^{(1)}\big)$ and $\big(M_{2}, \pi_{2}^{(1)}\big)$.

\subsubsection*{Outline of the paper}
The paper is organized as follows:
\begin{itemize}
  \item Section \ref{sec:nijenhuis-groupoids}: Following a review of Nijenhuis structures, we introduce Nijenhuis groupoids and define Morita equivalence for multiplicative $(1,1)$-tensor fields (Definition \ref{def:morita-nijenhuis}). In Theorem \ref{thm:morita-equiv-relation}, we prove that this notion constitutes an equivalence relation.
  \item Section \ref{sec:nijenhuis-lie-algebroids}: Transitioning to the infinitesimal setting, we introduce Morita equivalence for infinitesimal Nijenhuis structures on Lie algebroids (Definition \ref{def:morita-equiv-algebroid}). Theorems \ref{thm:morita-nijenhuis-groupoid-to-algebroid} and \ref{thm:morita-nijenhuis-algebroid-to-groupoid} establish the respective differentiation and integration procedures, yielding a one-to-one correspondence between global and infinitesimal Morita equivalences (Corollary \ref{cor:corresp-morita-nijenhuis}).
  \item Section \ref{sec:symplectic-nijenhuis}: We study the compatibility of the Morita equivalence for Nijenhuis structures with both quasi-symplectic groupoids and twisted Dirac structures. Consequently, we extend the established notions of Morita equivalence for quasi-symplectic groupoids and Dirac structures to quasi-symplectic-Nijenhuis groupoids and Dirac-Nijenhuis structures, respectively. We establish the global-to-infinitesimal correspondence via differentiation (Theorem \ref{thm:symp-nij-morita-dirac-nij}) and integration (Theorem \ref{thm:morita-diracnij-to-sympnij}). Finally, in Theorem \ref{thm:modular-vectors}, we prove that the modular class of Poisson-Nijenhuis manifolds is invariant under Morita equivalence via non-degenerate infinitesimal bibundles.
\end{itemize}

\subsubsection*{Acknowledgements} I would like to thank Henrique Bursztyn for suggesting this project and for his invaluable guidance. I am also grateful to Alejandro Cabrera and Daniel Álvarez for their comments on this manuscript. This work was supported by a grant from CNPq.

\section{Nijenhuis Groupoids}

\label{sec:nijenhuis-groupoids}

In this section, we establish a notion of Morita equivalence for multiplicative (1,1)-tensor fields on Lie groupoids that is compatible with the Nijenhuis condition. We provide two equivalent characterizations (Proposition \ref{prop:morita-equiv-characterization}) for our definition of Morita equivalence (Definition \ref{def:morita-nijenhuis}). The main result of this section, Theorem \ref{thm:morita-equiv-relation}, establishes that this relation constitutes an equivalence relation for Lie groupoids endowed with multiplicative (1,1)-tensor fields, and in particular for Nijenhuis groupoids.
\subsection{Preliminaries on Nijenhuis structures}

 We denote by $\Omega^{k}(P,TP)=\Gamma(\wedge^{k}T^{*}P\otimes TP)$ the space of $(1,k)$-tensor fields (also referred to as {\em vector-valued $k$-forms}) on a manifold $P$. Recall that given a $(1,1)$-tensor field $N\in \Omega^{1}(P,TP)$, its \textbf{Nijenhuis torsion} \cite{nijenhuis1951xn} is the $(1,2)$-tensor field $\mathscr{N}_{N}\in \Omega^{2}(P,TP)$ given by
\begin{equation}
  \label{def:nijenhuis-torsion}
  \mathscr{N}_{N}\left(u,v \right) \coloneqq \left[ N(u),N(v) \right] -N\left( \left[ N(u) , v\right]+\left[ u,N(v) \right] - N([ u,v ])\right),
\end{equation}
for every $u,v\in \mathfrak{X}(P)$. The tensor field $N\in \Omega^{1}(P,TP)$ is said to be a \textbf{Nijenhuis tensor field} if its Nijenhuis torsion vanishes.

Given a smooth map $\varphi:P\to B$, we say that two $(1,k)$-tensor fields $N \in \Omega^{k}(P,TP)$ and $N_{B}\in \Omega^{k}(B,TB)$ are \textit{$\varphi$-related} (see e.g., \cite{kolar2013natural}) if for every $p\in P$ and $u_{1},\ldots,u_{k} \in T_{p}P$ we have
\[
\mathrm{d}\varphi \left( N_{p}(u_{1},\ldots,u_{k}) \right) = (N_{B})_{\varphi(p)}\left( \mathrm{d}\varphi(u_{1}), \ldots,\mathrm{d}\varphi(u_{k}) \right).
\]
In the case where $\varphi:P\to B$ is a surjective submersion, we say that $N\in \Omega^{k}(P,TP)$ is \textbf{$\varphi$-projectable} if it is $\varphi$-related to some $N_{B}\in \Omega^{k}(B,TB)$.

The Nijenhuis torsion is \textit{natural} in the sense that $\varphi$-related (1,1)-tensor fields have $\varphi$-related Nijenhuis torsions; in particular given a $\varphi$-projectable $(1,1)$-tensor field $N\in \Omega^{1}(P,TP)$, its Nijenhuis torsion $\mathscr{N}_{N} \in \Omega^{2}(P,TP)$ is also  $\varphi$-projectable \cite[Section 8.15]{kolar2013natural}. So if $N$ is a Nijenhuis $\varphi$-projectable $(1,1)$-tensor field $\varphi$-related to $N_{B}\in \Omega^{1}(B,TB)$, then the Nijenhuis torsion $\mathscr{N}_{N_{B}}$ of $N_{B}$ vanishes; hence $N_{B}$ is also a Nijenhuis tensor field.

We want to characterize the conditions under which a $(1,1)$-tensor field is projectable. Let $\varphi:P\to B$ be a surjective submersion. The differential of $\varphi$ induces a vector bundle map $\mathrm{d}\varphi:TP\to \varphi^{*}TB$ covering the identity. Then, we have the short exact sequence of vector bundles over $P$:
\begin{equation*}
  0 \xrightarrow{\quad \quad} \operatorname{ker}\mathrm{d}\varphi \xhookrightarrow{\quad  \quad} TP \xrightarrow{\quad \mathrm{d}\varphi \quad} \varphi^{*}TB \xrightarrow{\quad \quad } 0.
\end{equation*}
Consequently, the vector bundle $\varphi^{*}TB$ is isomorphic to $TP / \operatorname{ker}\mathrm{d}\varphi$. Recall that there is an action of the submersion groupoid $P\times_{B} P\rightrightarrows P$ on $\varphi^{*}TB$ given by $\left(p,q  \right)\cdot  \left(q,v  \right)=\left(p,v  \right)$, and it induces a unique action of $P\times_{B} P$ on $TP / \operatorname{ker} \mathrm{d}\varphi$ such that the isomorphism $\mathrm{d}\varphi$ is equivariant (see \cite[Section 1.6]{mackenzie2005general}).

\begin{lemma}
  \label{lemma:projectable-iff-preserves}
  Let $N\in \Omega^{1}(P,TP)$ be a $(1,1)$-tensor field and let $\varphi:P\to B$ be a surjective submersion. Then $N$ is $\varphi$-projectable if and only if $N\left( \operatorname{ker}\mathrm{d}\varphi \right) \subseteq \operatorname{ker}\mathrm{d}\varphi$ and the induced map $\bar{N}:TP/\operatorname{ker}\mathrm{d}\varphi\to TP/\operatorname{ker}\mathrm{d}\varphi$ is $(P\times_{B}P)$-equivariant.
\end{lemma}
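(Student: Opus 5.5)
The proof turns everything into statements about $\mathrm{d}\varphi$ and the identification $\varphi^{*}TB\cong TP/\operatorname{ker}\mathrm{d}\varphi$. The key observation is that $N$ is $\varphi$-related to some $N_{B}$ exactly when there is a bundle map $\varphi^{*}N_{B}:\varphi^{*}TB\to\varphi^{*}TB$ of the form $(p,v)\mapsto (p,(N_{B})_{\varphi(p)}v)$ with $\mathrm{d}\varphi\circ N=\varphi^{*}N_{B}\circ \mathrm{d}\varphi$. Moreover, an endomorphism of $\varphi^{*}TB$ is a pullback of an endomorphism of $TB$ if and only if it commutes with the $P\times_{B}P$-action, i.e. $(p,q)\cdot$ intertwines its values at $q$ and $p$.

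\emph{($\Rightarrow$)} Suppose $N$ is $\varphi$-related to $N_{B}$.
\begin{itemize}
\item If $u\in\operatorname{ker}\mathrm{d}\varphi_{p}$, then $\mathrm{d}\varphi(Nu)=N_{B}(\mathrm{d}\varphi u)=0$, so $N$ preserves the kernel and induces $\bar N$.
\item Under the equivariant isomorphism induced by $\mathrm{d}\varphi$, the map $\bar N$ corresponds to $(p,v)\mapsto(p,N_{B}v)$, with $v\in T_{\varphi(p)}B$.
\item For $(p,q)\in P\times_{B}P$ we have $(p,q)\cdot(q,N_{B}v)=(p,N_{B}v)$, which is the image of $(p,q)\cdot(q,v)$. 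Hence $\bar N$ is equivariant.
\end{itemize}

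\emph{($\Leftarrow$)} Suppose $N$ preserves the kernel and $\bar N$ is equivariant. Transport $\bar N$ to an endomorphism $\tilde N$ of $\varphi^{*}TB$; it is smooth since $\mathrm{d}\varphi$ is a smooth bundle isomorphism onto $\varphi^{*}TB$ after quotienting. Write $\tilde N(p,v)=(p,A_{p}v)$ with $A_{p}\in\operatorname{End}(T_{\varphi(p)}B)$.
\begin{itemize}
\item Equivariance gives $A_{p}=A_{q}$ whenever $\varphi(p)=\varphi(q)$.
\item So $(N_{B})_{b}:=A_{p}$ for any $p\in\varphi^{-1}(b)$ is well defined, using surjectivity of $\varphi$.
\item By construction, $\mathrm{d}\varphi(N u)=N_{B}(\mathrm{d}\varphi u)$.
\end{itemize}

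The main obstacle is smoothness of $N_{B}$. Since $\varphi$ is a submersion, it admits local sections $\sigma:U\to P$ around each $b\in B$. On $U$ we have $N_{B}=\sigma^{*}A$, i.e. $(N_{B})_{b'}=A_{\sigma(b')}$. Concretely, $(N_{B})_{b'}(w)=\mathrm{d}\varphi\bigl(N(\tilde w)\bigr)$ for any lift $\tilde w\in T_{\sigma(b')}P$. One can take $\tilde w=\mathrm{d}\sigma(w)$, so on $U$
\[
N_{B}=\mathrm{d}\varphi\circ N\circ\mathrm{d}\sigma ,
\]
a composition of smooth maps. Hence $N_{B}$ is a smooth $(1,1)$-tensor field and $N$ is $\varphi$-projectable.
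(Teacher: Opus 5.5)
Your proof is correct and follows the same route as the paper's: kernel preservation together with equivariance under the $P\times_{B}P$-action on $TP/\operatorname{ker}\mathrm{d}\varphi\cong\varphi^{*}TB$, and in the converse direction $N_{B}$ is defined through lifts and shown to be independent of both the lift and the point in the fiber. Your local-section formula $N_{B}=\mathrm{d}\varphi\circ N\circ\mathrm{d}\sigma$ also proves that $N_{B}$ is smooth, a point the paper's proof does not address.
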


\begin{proof}
  First, assume that $N$ is $\varphi$-projectable. There exists a tensor field $N_{B} \in \Omega^{1}(B,TB)$ such that $\mathrm{d}\varphi(N_{p}(u)) = (N_{B})_{\varphi(p)}(\mathrm{d}\varphi(u))$ for every $u \in T_{p}P$. If $u \in \operatorname{ker}\mathrm{d}\varphi$, it follows that
  \[
    \mathrm{d}\varphi(N(u)) = N_{B}(\mathrm{d}\varphi(u)) = N_{B}(0) = 0.
  \]
  Thus, $N(\operatorname{ker}\mathrm{d}\varphi)\subseteq \operatorname{ker}\mathrm{d}\varphi$, and $N$ descends to a well-defined map $\bar{N}$ on the quotient bundle $TP/\operatorname{ker}\mathrm{d}\varphi$.

  To establish equivariance, we use the fact that the isomorphism $\varphi^{*}TB \cong TP/\operatorname{ker}\mathrm{d}\varphi$ induces an action of $P\times_{B}P$ on the quotient bundle. Specifically, an arrow $(p,q) \in P\times_{B}P$ acts on $\bar{v} \in (TP/\operatorname{ker}\mathrm{d}\varphi)_{q}$ to yield $(p,q)\cdot \bar{v}= \bar{u} \in (TP/\operatorname{ker}\mathrm{d}\varphi)_{p}$ if and only if $\mathrm{d}\varphi(u) = \mathrm{d}\varphi(v)$. Using the definition of the projected tensor $N_{B}$, we compute:
  \[
    \bar{N}_{p}(\bar{u}) \cong (N_{B})_{\varphi(p)}(\mathrm{d}\varphi(u)) = (N_{B})_{\varphi(q)}(\mathrm{d}\varphi(v)) \cong \bar{N}_{q}(\bar{v}).
  \]
  Since $(p,q)$ acts as the identity on the base vectors, this equality implies $\bar{N}_{p}((p,q)\cdot \bar{v}) = (p,q) \cdot \bar{N}_{q}(\bar{v})$, proving that $\bar{N}$ is $(P\times_{B}P)$-equivariant.

  Conversely, suppose that $N(\operatorname{ker}\mathrm{d}\varphi)\subseteq \operatorname{ker}\mathrm{d}\varphi$ and that the induced map $\bar{N}$ is equivariant. We define a tensor field $N_{B} \in \Omega^{1}(B,TB)$ by setting $(N_{B})_{b}(w) \coloneqq \mathrm{d}\varphi(N_{p}(u))$, where $p\in \varphi^{-1}(b)$ and $u\in T_{p}P$ satisfies $\mathrm{d}\varphi(u)=w$. The value of $(N_{B})_{b}(w)$ is independent of the choice of the lift $u$ because $N$ preserves the vertical kernel. Furthermore, it is independent of the choice of $p$ because the induced map $\bar{N}$ is $(P\times_{B}P)$-equivariant. Thus, $N_{B}$ is well-defined and $N$ is $\varphi$-projectable.
\end{proof}
Recall that an action of a Lie groupoid $\mathscr{G}=\left(G_{1}\rightrightarrows G_{0}\right)$ on a manifold $P$ is \textit{principal} if there exists a surjective submersion $\varphi:P\to B$ such that the map $G_1 \times_{G_{0}} P \to P\times_{B}P$ given by $(g,p)\mapsto (g\cdot p,p)$ is a diffeomorphism. In this context, the action groupoid $\mathscr{G}\ltimes P$ is isomorphic to the submersion groupoid $P\times_{B}P$, yielding the following result.
\begin{corollary}
    \label{cor:projectable-iff-preserves-action}
    Suppose that a Lie groupoid $\mathscr{G}$ acts principally on $P$ with quotient map $\varphi:P\to B$. A $(1,1)$-tensor field $N\in \Omega^{1}(P,TP)$ is $\varphi$-projectable if and only if $N$ preserves the tangent spaces to the orbits of the action and the induced map $\bar{N}:TP /\operatorname{ker}\mathrm{d}\varphi\to TP /\operatorname{ker}\mathrm{d}\varphi$ is $\mathscr{G}$-equivariant.
\end{corollary}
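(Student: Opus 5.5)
The plan is to reduce the corollary directly to Lemma \ref{lemma:projectable-iff-preserves}. That lemma needs two conditions: $N(\operatorname{ker}\mathrm{d}\varphi)\subseteq \operatorname{ker}\mathrm{d}\varphi$, and $(P\times_{B}P)$-equivariance of $\bar{N}$. I will show that, for a principal action, these two conditions are exactly the ones in the statement.

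\emph{Vertical bundle.} I first identify $\operatorname{ker}\mathrm{d}\varphi$ with the tangent distribution to the $\mathscr{G}$-orbits. By principality, the map $G_1\times_{G_0}P\to P\times_{B}P$, $(g,p)\mapsto (g\cdot p,p)$, is a diffeomorphism. Its image is the fibered product, so the orbit through $p$ equals the fiber $\varphi^{-1}(\varphi(p))$. This fiber is an embedded submanifold because $\varphi$ is a submersion. Hence $T_{p}(\mathscr{G}\cdot p)=T_{p}\varphi^{-1}(\varphi(p))=\operatorname{ker}\mathrm{d}_{p}\varphi$. Equivalently, differentiating $t\mapsto \varphi(g_t\cdot p)$ gives the inclusion of infinitesimal orbit directions into $\operatorname{ker}\mathrm{d}\varphi$, and the dimension count from the diffeomorphism gives equality. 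So ``$N$ preserves the tangent spaces to the orbits'' is the same as $N(\operatorname{ker}\mathrm{d}\varphi)\subseteq\operatorname{ker}\mathrm{d}\varphi$.

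\emph{Equivariance.} Next I transport the action of $P\times_{B}P$ on $TP/\operatorname{ker}\mathrm{d}\varphi$ to a $\mathscr{G}$-action along the groupoid isomorphism $\mathscr{G}\ltimes P\cong P\times_{B}P$, $(g,p)\mapsto(g\cdot p,p)$. First I check that this map really is a groupoid morphism. Source and target match: $s(g,p)=p$ and $t(g,p)=g\cdot p$. Composition matches: $(h,g\cdot p)(g,p)=(hg,p)$ maps to $(hg\cdot p,g\cdot p)(g\cdot p,p)$. The induced $\mathscr{G}$-action on the quotient is $g\cdot\bar v:=(g\cdot p,p)\cdot\bar v$. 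Concretely, it sends $\bar v$ to the unique class $\bar u$ at $g\cdot p$ with $\mathrm{d}\varphi(u)=\mathrm{d}\varphi(v)$. This is the natural meaning of ``$\mathscr{G}$-equivariant'' in the statement. It also agrees with the quotient of the honest tangent action where that exists, e.g.\ through bisections, since $\varphi$ is $\mathscr{G}$-invariant.

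Since the correspondence $(g,p)\leftrightarrow(g\cdot p,p)$ is bijective, every arrow of $P\times_B P$ comes from a unique arrow of $\mathscr{G}\ltimes P$. Therefore $\bar N$ is $(P\times_BP)$-equivariant iff $\bar N_{g\cdot p}(g\cdot\bar v)=g\cdot \bar N_p(\bar v)$ for all composable $(g,p)$, i.e.\ iff $\bar N$ is $\mathscr{G}$-equivariant. Combining this with the first step and the lemma proves the corollary.

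The main obstacle is the first step: showing that the orbit tangent spaces equal $\operatorname{ker}\mathrm{d}\varphi$, rather than merely being contained in it. This is the only place where principality, rather than just invariance of $\varphi$, is used. I handle it by noting that the orbits coincide with the $\varphi$-fibers as sets, and that orbits of Lie groupoid actions are immersed submanifolds whose tangent spaces are spanned by the infinitesimal action. Two connected immersed submanifolds with the same underlying set, one of which is embedded, have the same tangent spaces, so the containment is an equality.
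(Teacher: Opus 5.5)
Your proposal is correct and follows the paper's route. The paper obtains the corollary directly from Lemma \ref{lemma:projectable-iff-preserves} through the isomorphism $\mathscr{G}\ltimes P\cong P\times_{B}P$, and it takes for granted that $\operatorname{ker}\mathrm{d}\varphi$ equals the orbit tangent spaces. For that step you can drop the connectedness hypothesis, since the $\varphi$-fibers need not be connected. The principal diffeomorphism $(g,p)\mapsto(g\cdot p,p)$ commutes with the projections to the second factor $P$, so it restricts to a diffeomorphism $\mathbf{s}^{-1}(\mu(p))\to\varphi^{-1}(\varphi(p))$, $g\mapsto g\cdot p$, whose differential at the unit identifies the infinitesimal orbit directions with $\operatorname{ker}\mathrm{d}_{p}\varphi$.
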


In particular, a Nijenhuis tensor field $N\in \Omega^{1}(P,TP)$ satisfying the conditions of Corollary \ref{cor:projectable-iff-preserves-action} induces a Nijenhuis tensor field on the quotient.
\subsection{Multiplicative Nijenhuis structures}

\textit{Multiplicative tensor fields} on a Lie groupoid $\mathscr{G}=(G_{1}\rightrightarrows G_{0})$ are defined by means of the \textit{tangent groupoid} $T\mathscr{G}$. The tangent groupoid is obtained by applying the tangent functor to $\mathscr{G}$; specifically, the manifolds of arrows and objects are $TG_{1}$ and $TG_{0}$, respectively, and the structure maps are the differentials of the structure maps of $\mathscr{G}$. Moreover, by taking $k$-fold Whitney sums, we obtain a Lie groupoid $\oplus^{k}T\mathscr{G}=\left(\oplus^{k}TG_{1}\rightrightarrows \oplus^{k}TG_{0}\right)$, with structure maps defined componentwise.

A $(1,k)$-tensor field $K\in \Omega^{k}(G_{1},TG_{1})$ is said to be a $\textbf{multiplicative $(1,k)$-tensor field}$ if there exists a $(1,k)$-tensor field $r_{K}\in \Omega^k(G_{0},TG_{0})$ such that
\[
  \begin{tikzcd}
\oplus^{k}TG_1 \arrow[r, "K"] \arrow[d, shift left] \arrow[d, shift right] & TG_1 \arrow[d, shift right] \arrow[d, shift left] \\
\oplus^k TG_0 \arrow[r, "r_K"]                                             & TG_0                                             
\end{tikzcd}
\]
is a Lie groupoid morphism. We denote by $\Omega^{k}(\mathscr{G},T\mathscr{G})$ the space of multiplicative $(1,k)$-tensor fields on $\mathscr{G}$. For details on multiplicative tensor fields we refer to \cite{bursztyn-drummond2019multiplicative}.

\begin{definition}
  \label{def:nijenhuis-groupoid}
  A \textbf{Nijenhuis groupoid} is a pair $(\mathscr{G},N)$, where $\mathscr{G}=\left( G_{1}\rightrightarrows G_{0} \right) $ is a Lie groupoid and $N \in \Omega^{1}(\mathscr{G},T\mathscr{G})$ is a Nijenhuis tensor field; that is, $N$ is a multiplicative $(1,1)$-tensor field whose Nijenhuis torsion $\mathscr{N}_{N}$ vanishes.
\end{definition}
\begin{example}
  \label{eg:holomorphic-groupoid}
  A \textit{holomorphic groupoid} \cite{laurent2009integration,weinstein2007integration} is a groupoid object in the category of complex manifolds; that is, the spaces of arrows and objects are complex manifolds and all structural maps are holomorphic. Equivalently, such a groupoid is a (real) Lie groupoid with an integrable multiplicative almost-complex structure \cite[Proposition 3.16]{laurent2009integration}. Thus, a holomorphic groupoid is a Nijenhuis groupoid $(\mathscr{G},I_{\mathscr{G}})$, where $I_{\mathscr{G}}\in \Omega^{1}(\mathscr{G},T\mathscr{G})$ satisfies $I_{\mathscr{G}}^{2}=-\mathrm{id}_{T\mathscr{G}}$.
\end{example}
\begin{example}
  \label{eg:SN-groupoid}
  Nijenhuis groupoids also arise naturally in the integration of Poisson-Nijenhuis manifolds (see Example \ref{eg:poisson-nijenhuis}). Stiénon and Xu \cite{stienon-xu2007poisson} proved that if the underlying Poisson structure of a Poisson-Nijenhuis manifold $(M,\pi,r)$ integrates to a symplectic groupoid $(\mathscr{G}, \omega)$, then $r$ lifts to a unique multiplicative Nijenhuis tensor field $N\in \Omega^{1}(\mathscr{G},T\mathscr{G})$. The compatibility between $\pi$ and $r$ ensures that $N$ and $\omega$ are compatible, endowing $\mathscr{G}$ with the structure of a \textit{symplectic-Nijenhuis groupoid}.
\end{example}
The following lemma establishes conditions under which a $(1,1)$-tensor field on a manifold $P$ is projectable when a Nijenhuis groupoid acts on $P$.
\begin{lemma}
    \label{lem:projectable_tensor_principal_bundle}
    Let $\mathscr{G} = (G_1 \rightrightarrows G_0)$ be a Lie groupoid and $\pi: P \to B$ a principal $\mathscr{G}$-bundle equipped with an action $\mathscr{A}: G_1 \times_{G_0} P \to P$. Assume that $\mathscr{G}$ is equipped with a multiplicative $(1,1)$-tensor field $N \in \Omega^1(\mathscr{G}, T\mathscr{G})$ and that $P$ carries a $(1,1)$-tensor field $J \in \Omega^1(P, TP)$. If the action intertwines the tensor fields, i.e.,
    \begin{equation}
        \label{eq:intertwine_action_lemma}
        J \circ \mathrm{d}\mathscr{A} = \mathrm{d}\mathscr{A} \circ (N \times J),
    \end{equation}
    then $J$ projects to a unique $(1,1)$-tensor field $J_B \in \Omega^1(B, TB)$ on the base. Furthermore, if $N$ and $J$ are Nijenhuis tensor fields, then $J_B$ is also a Nijenhuis tensor field.
\end{lemma}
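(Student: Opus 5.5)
The plan is to apply Corollary \ref{cor:projectable-iff-preserves-action} to the principal action with quotient map $\pi:P\to B$. Two things must be checked: that $J$ preserves the tangent spaces to the orbits, i.e.\ $J(\ker\mathrm{d}\pi)\subseteq\ker\mathrm{d}\pi$, and that the induced map $\bar J$ on $TP/\ker\mathrm{d}\pi$ is $\mathscr{G}$-equivariant. Uniqueness of $J_B$ is automatic, since $\mathrm{d}\pi$ is surjective. The Nijenhuis statement then follows from the naturality of the Nijenhuis torsion, as recalled after Lemma \ref{lemma:projectable-iff-preserves}.

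\emph{Preservation of vertical vectors.} Let $\mu:P\to G_0$ be the moment map. Since the action is principal, $\ker\mathrm{d}_p\pi$ is the tangent space to the orbit through $p$. It therefore consists of vectors $\mathrm{d}\mathscr{A}(\xi,0_p)$ with $\xi\in T_{1_{\mu(p)}}s^{-1}(\mu(p))$. More generally, it consists of vectors $\mathrm{d}\mathscr{A}(\xi,v)-v$ where $(\xi,v)$ is tangent to $G_1\times_{G_0}P$ at $(1_{\mu(p)},p)$.

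I will argue directly with the first description. By multiplicativity, $N$ is a groupoid morphism $TG_1\to TG_1$ covering $r_N$, so it preserves $\ker\mathrm{d}s$ and units. In particular $N(\ker\mathrm{d}s|_{G_0})\subseteq\ker\mathrm{d}s|_{G_0}$, and $(N\xi,J0_p)=(N\xi,0_p)$ is again composable. Applying \eqref{eq:intertwine_action_lemma} gives
\[
J\,\mathrm{d}\mathscr{A}(\xi,0_p)=\mathrm{d}\mathscr{A}(N\xi,0_p),
\]
which is again vertical.

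A subtle point is that $N\times J$ must map $T(G_1\times_{G_0}P)$ into itself, which the hypothesis implicitly requires. This amounts to $\mathrm{d}\mu\circ J=r_N\circ\mathrm{d}\mu$. I will read this compatibility as part of the meaning of \eqref{eq:intertwine_action_lemma}, or derive it by applying $\mathrm{d}\mu$ to \eqref{eq:intertwine_action_lemma} at units.

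\emph{Equivariance.} The action of $g\in G_1$ on $(TP/\ker\mathrm{d}\pi)_p$ sends $\bar v$ to $\overline{\mathrm{d}\mathscr{A}(\eta,v)}$, where $\eta\in T_gG_1$ is any vector with $\mathrm{d}s(\eta)=\mathrm{d}\mu(v)$. The resulting class is independent of the choice of $\eta$, because two choices differ by an element of $\ker\mathrm{d}s$, which produces an orbit direction. Then
\[
\bar J\big(g\cdot\bar v\big)=\overline{J\,\mathrm{d}\mathscr{A}(\eta,v)}=\overline{\mathrm{d}\mathscr{A}(N\eta,Jv)}=g\cdot\bar J(\bar v).
\]
The last step uses that $N\eta\in T_gG_1$ and $\mathrm{d}s(N\eta)=r_N\mathrm{d}s(\eta)=r_N\mathrm{d}\mu(v)=\mathrm{d}\mu(Jv)$, so $N\eta$ is an admissible lift for $Jv$. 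Corollary \ref{cor:projectable-iff-preserves-action} then yields $J_B$.

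\emph{Nijenhuis part.} If $J$ is Nijenhuis, its torsion $\mathscr{N}_J=0$ is $\pi$-related to $\mathscr{N}_{J_B}$. Surjectivity of $\mathrm{d}\pi$ then forces $\mathscr{N}_{J_B}=0$. The hypothesis on $N$ is not needed for this conclusion.

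The main obstacle is the well-definedness of the induced action on the quotient, together with the compatibility $\mathrm{d}\mu\circ J=r_N\circ\mathrm{d}\mu$ that makes $N\times J$ act on the fibered product. I will verify both carefully, using only that $N$ is a groupoid morphism covering $r_N$.
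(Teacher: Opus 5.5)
Your proposal is correct and follows the same route as the paper: apply Corollary~\ref{cor:projectable-iff-preserves-action}, show that vertical vectors $\mathrm{d}\mathscr{A}(\xi,0_p)$ are sent to $\mathrm{d}\mathscr{A}(N\xi,0_p)$ because $N$ preserves $\ker\mathrm{d}\mathbf{s}$, obtain equivariance from \eqref{eq:intertwine_action_lemma}, and finish the Nijenhuis part by naturality of the torsion (as you note, only $\mathscr{N}_J=0$ is used). Your equivariance step is cleaner than the paper's: the paper writes $w=\mathrm{d}\mathscr{A}(\overrightarrow{\xi}_g,v)$, which is composable only when $\mathrm{d}\mu(v)=0$, whereas you use an arbitrary lift $\eta$ with $\mathrm{d}\mathbf{s}(\eta)=\mathrm{d}\mu(v)$ and check that the result does not depend on the choice; however, $\mathrm{d}\mu\circ J=r_N\circ\mathrm{d}\mu$ should simply be taken as part of the hypothesis, because deriving it from \eqref{eq:intertwine_action_lemma} is circular (that identity already presupposes that $(N\eta,Jv)$ is composable).
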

\begin{proof}
We apply Corollary \ref{cor:projectable-iff-preserves-action}. First, we verify that $J$ preserves the vertical bundle $\operatorname{ker}\mathrm{d}\pi$, which coincides with the tangent spaces to the orbits of the $\mathscr{G}$-action. Let $\mathscr{O}$ be the orbit through a point $p\in P$, and write $x=\mu(p)$. For any $v\in T_{p}\mathscr{O}$, there exists $\xi\in \Gamma(A_{\mathscr{G}})$, where $A_{\mathscr{G}}$ is the Lie algebroid of $\mathscr{G}$, such that $v = \mathrm{d}\mathscr{A}_{(\mathbf{1}_{x},p)}\left( \overrightarrow{\xi}_{x},0_{p} \right)$. Applying condition \eqref{eq:intertwine_action_lemma} and the linearity of $J$, we obtain:
    \[
        J(v) = J\left(\mathrm{d}\mathscr{A}\left( \overrightarrow{\xi}_{x},0_{p} \right) \right) = \mathrm{d}\mathscr{A} \left( N\left( \overrightarrow{\xi}_{x} \right),J\left( 0_{p} \right) \right) = \mathrm{d}\mathscr{A} \left( N\left( \overrightarrow{\xi}_{x} \right),0_{p} \right).
    \]
    Since $N$ is a morphism of the tangent groupoid, it preserves the kernel of the source map differential. Consequently, there exists $\zeta\in \Gamma\left( A_{\mathscr{G}} \right)$ such that $N\left( \overrightarrow{\xi} \right) = \overrightarrow{\zeta}$. Therefore, $J(v)$ takes the form $J(v) = \mathrm{d}\mathscr{A}\left( \overrightarrow{\zeta}_{x}, 0_{p} \right)$, which implies $J(v) \in T_{p}\mathscr{O}$.

To establish that the induced map $\bar{J} \colon TP/\operatorname{ker}\mathrm{d}\pi \to TP/\operatorname{ker}\mathrm{d}\pi$ is $\mathscr{G}$-equivariant, we must show that $g \cdot \bar{J}(\bar{v}) = \bar{J}(g \cdot \bar{v})$ for every $v \in T_{p}P$, and $(g,p)\in G_{1}\times_{G_{0}}P$. Under the identification $TP/\operatorname{ker}\mathrm{d}\pi \cong \varphi^{*}TB$, we may write $\bar{v} \cong (\varphi(p),\mathrm{d}\pi(v))$. Thus, $w \in T_{g\cdot p}P$ satisfies $g \cdot \bar{v} = \bar{w}$ if and only if $\mathrm{d}\pi(v) = \mathrm{d}\pi(w)$. We proceed as before: if $g \cdot \bar{v} = \bar{w}$, then there exists $\xi \in \Gamma(A_{\mathscr{G}})$ such that $w = \mathrm{d}\mathscr{A}\left( \overrightarrow{\xi}_{g},v \right)$. Applying condition \eqref{eq:intertwine_action_lemma}, we obtain
    \[
      J_{g\cdot p} (w) = J_{g\cdot p} \left( \mathrm{d}\mathscr{A}\left( \overrightarrow{\xi}_{g},v \right) \right) = \mathrm{d}\mathscr{A}\left( N\left( \overrightarrow{\xi}_{g} \right), J_{p}(v) \right) = \mathrm{d}\mathscr{A}\left( \overrightarrow{\zeta}_{g}, J_{p}(v) \right)
    \]
for some $\zeta \in \Gamma(A_{\mathscr{G}})$. Therefore, $\bar{J}(\bar{w}) = g \cdot \overline{J(v)}$, which concludes the proof.

Finally, assume that $N$ and $J$ are Nijenhuis tensor fields. Since $J$ projects to $J_B$, the Nijenhuis torsion $N_{J}$ is $\pi$-related to $N_{J_B}$. The vanishing of $N_J$ implies the vanishing of $N_{J_B}$, proving that $J_B$ is a Nijenhuis tensor field.
\end{proof}

\subsection{Morita equivalence of Nijenhuis groupoids}

 In order to introduce Morita equivalence of Nijenhuis groupoids, we recall two equivalent notions of Morita equivalence for Lie groupoids following \cite[Theorem 2.2]{behrend2011differentiable} (see also \cite[Section 2.6]{moerdijk-mrcun2005lie}).

A first formulation of Morita equivalence is given in terms of \textit{principal bibundles}. A \textbf{Morita equivalence} between Lie groupoids $\mathscr{G}=\left( G_{1}\rightrightarrows G_{0} \right)$ and $\mathscr{H}=\left( H_{1}\rightrightarrows H_{0} \right)$ consists of a manifold $P$ fitting into a diagram
\[
    \begin{tikzcd}
       G_1 \arrow[d, shift right] \arrow[d, shift left] & P \arrow[ld, "\mu_1"'] \arrow[rd, "\mu_2"] & H_1 \arrow[d, shift right] \arrow[d, shift left] \\ 
       G_0                                              &                                            & H_0                                             
    \end{tikzcd}
\]
together with two commuting actions $\mathscr{A}_{1}:G_{1}\times_{(\mathbf{s}_{\mathscr{G}},\mu_{1})}P\to P$ and $\mathscr{A}_{2}:P\times_{(\mu_{2},\mathbf{t}_{\mathscr{H}})}H_1\to P$. We require the moment maps $\mu_{1}$ and $\mu_{2}$ to be invariant with respect to the actions of $\mathscr{H}$ and $\mathscr{G}$, respectively. Furthermore, the actions must be principal; that is, $\mathscr{G}$ acts freely and transitively on the fibers of $\mu_{2}$, and $\mathscr{H}$ acts freely and transitively on the fibers of $\mu_{1}$. In this setting, we call $P$ a \textit{Morita $(\mathscr{G},\mathscr{H})$-bibundle}.

The other formulation of Morita equivalence requires the notion of \textit{Morita morphism}. A Morita morphism is a Lie groupoid morphism $ \varphi :\mathscr{H}\to \mathscr{G} $ such that $ \varphi:H_{0}\to G_{0} $ is a surjective submersion and 
\[ 
\begin{tikzcd}
  H_{1} \arrow[d, "{(\mathbf{s}_{\mathscr{H}},\mathbf{t}_{\mathscr{H}})}"'] \arrow[r, "\varphi "] & G_{1} \arrow[d, "{(\mathbf{s}_{\mathscr{G}},\mathbf{t}_{\mathscr{G}})}"]\\
  H_{0}\times H_{0} \arrow[r, "\varphi\times \varphi"] & G_{0} \times G_{0}
\end{tikzcd}
\]
is a cartesian (pullback) diagram, i.e., $ H_{1} \cong (\varphi\times \varphi)^{*}G_{1}$. Two Lie groupoids $\mathscr{G}$ and $\mathscr{H}$ are Morita equivalent if and only if they are connected by a span of Morita morphisms $\mathscr{G}\gets  \mathscr{P}\to  \mathscr{H}$. The diagram $\mathscr{G}\gets \mathscr{P} \to  \mathscr{H}$ is also referred as a Morita equivalence (in terms of Morita morphisms) between $\mathscr{G}$ and $\mathscr{H}$. 

The following proposition establishes equivalent characterizations for a relation between multiplicative (1,1)-tensor fields, which form the basis for the definition of Morita equivalence of Nijenhuis groupoids introduced in Definition \ref{def:morita-nijenhuis}.
\begin{proposition}
  \label{prop:morita-equiv-characterization} 
  Let $\mathscr{G}=\left( G_{1}\rightrightarrows G_{0} \right)$ and $\mathscr{H}=\left( H_{1}\rightrightarrows H_{0} \right)$ be two Lie groupoids, and let $N_{1}\in \Omega^{1}\left( \mathscr{G},T\mathscr{G} \right)$ and $N_{2}\in \Omega^{1}\left( \mathscr{H},T\mathscr{H} \right)$ be multiplicative $(1,1)$-tensor fields. The following properties are equivalent. 
\begin{itemize}
  \item[a.] There is a diagram
\begin{equation}
    \label{eq:morita-nijenhuis-diagram}
  \left(\mathscr{G},N_{1}\right) \xleftarrow{\quad \varphi_{1}\quad}  \left(\mathscr{P},K\right) \xrightarrow{\quad \varphi_{2}\quad } \left( \mathscr{H},N_{2}\right),
\end{equation}
where $\mathscr{G}\gets \mathscr{P}\to \mathscr{H}$ is a span of Morita morphisms and $K\in \Omega^{1}(\mathscr{P},T\mathscr{P})$ is  a multiplicative $(1,1)$-tensor field such that $K$ is $\varphi_{1}$-related to $N_{1}$ and $\varphi_{2}$-related to $N_{2}$. Explicitly, these conditions are given by
\begin{equation}
  \label{eq:morita-nijenhuis-equations}
  \mathrm{d}\varphi_{1}\circ K = N_{1}\circ\mathrm{d}\varphi_{1}\qquad \text{and}\qquad \mathrm{d}\varphi_{2}\circ K =N_{2} \circ \mathrm{d}\varphi_{2}.
\end{equation}
  \item[b.] There is a Morita bibundle endowed with a $(1,1)$-tensor field $J\in \Omega^{1}(P,TP)$,
\begin{equation}
  \label{eq:morita-bibundle-tensor fields}
\begin{tikzcd}
  \left(G_{1},N_{1}\right) \arrow[d, shift right] \arrow[d, shift left] & \left( P,J\right) \arrow[ld, "\mu_{1}" '] \arrow[rd, "\mu_{2}"] & \left(H_{1},N_{2}\right) \arrow[d, shift right] \arrow[d, shift left] \\
  \left(G_{0},r_{1}\right)        &           & \left(H_{0},r_{2}\right),
\end{tikzcd}
\end{equation}
 satisfying
\begin{equation}
  \label{eq:morita-tensor-restriction-2}
  N_{1}\times J\times N_{2} \left( TG_{1}\times_{TG_{0}}TP \times_{TH_{0}}TH_{1} \right)\subseteq TG_{1}\times_{TG_{0}}TP\times_{TH_{0}}TH_{1}
\end{equation}
and the invariance condition
\begin{equation}
  \label{eq:morita-tensor-action-2}
  J\circ \mathrm{d}\mathscr{A} = \mathrm{d}\mathscr{A}\circ\left( N_{1}\times J\times N_{2} \right),
\end{equation}
where $\mathscr{A}:G_{1}\times_{G_{0}}P\times_{H_{0}}H_{1}\to P$ is defined by $(g,p,h)\mapsto g\cdot p\cdot h$. 
\end{itemize}
\end{proposition}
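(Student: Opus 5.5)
We use the standard dictionary between the two descriptions of Morita equivalence. A bibundle $P$ yields the span $\mathscr{G}\gets \mathscr{P}\to\mathscr{H}$ with $\mathscr{P}=G_{1}\times_{G_{0}}P\times_{H_{0}}H_{1}\rightrightarrows P$. Conversely, a span yields a bibundle, for instance $P=G_{1}\times_{G_{0}}\mathscr{P}_{0}\times_{H_{0}}H_{1}$ modulo $\mathscr{P}$, or equivalently the Hilsum--Skandalis bibundle of the composite of $\varphi_{2}$ with a weak inverse of $\varphi_{1}$. We transport the tensor fields along each passage, using the tangent functor throughout.

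\textbf{(b) $\Rightarrow$ (a).} Take the groupoid $\mathscr{P}$ with arrows $G_{1}\times_{G_{0}}P\times_{H_{0}}H_{1}$ and objects $P$. An arrow $(g,p,h)$ goes from $p$ to $g\cdot p\cdot h$. Composition is $(g',p',h')(g,p,h)=(g'g,p,hh')$, with the appropriate ordering conventions. The maps $\varphi_{1}(g,p,h)=g$ and $\varphi_{2}(g,p,h)=h^{-1}$ (up to convention) are Morita morphisms, since $\mu_{1},\mu_{2}$ are surjective submersions and principality gives the pullback squares. Set $K=N_{1}\times J\times N_{2}$ on the fibred product. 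Condition \eqref{eq:morita-tensor-restriction-2} says exactly that $K$ is a well-defined $(1,1)$-tensor field there.

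We then check that $K$ is multiplicative with base tensor $J$. The source map is a projection, and $K$ commutes with its differential tautologically. The target map is $\mathscr{A}$, and compatibility with it is \eqref{eq:morita-tensor-action-2}. Multiplication is componentwise in $G_{1}$, $H_{1}$ and the source component. So compatibility of $K$ with $\mathrm{d}m$ follows from multiplicativity of $N_{1}$ and $N_{2}$. Inversion compatibility then follows automatically, as for any map compatible with the source, target and multiplication. Finally, \eqref{eq:morita-nijenhuis-equations} holds because $\mathrm{d}\varphi_{i}$ are projections, together with the fact that $N_{2}$ commutes with the differential of inversion.

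\textbf{(a) $\Rightarrow$ (b).} Given the span with $K$ and base tensor $r_{K}$ on $P_{0}$, first replace it by a bibundle. Define $Q=G_{1}\times_{\mathbf{s},\varphi_{1}}P_{0}\times_{\varphi_{2},\mathbf{t}}H_{1}$. The groupoid $\mathscr{P}$ acts on $Q$ by $(g,x,h)\cdot k=(g\varphi_{1}(k),\mathbf{s}(k),\varphi_{2}(k)^{-1}h)$, with the conventions adjusted. Set $P=Q/\mathscr{P}$. The action is principal because the $\varphi_{i}$ are Morita morphisms, and $P$ is the standard Morita bibundle.

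On $Q$ consider $\tilde J=N_{1}\times r_{K}\times N_{2}$. It preserves $TQ$ because $N_{1}$ covers $r_{1}$, $N_{2}$ covers $r_{2}$, and $r_{K}$ is $\varphi_{i}$-related to $r_{i}$; the last fact follows from \eqref{eq:morita-nijenhuis-equations} restricted to units. The action map intertwines $K\times\tilde J$ with $\tilde J$ by multiplicativity of $N_{1}$, $N_{2}$ and the relations \eqref{eq:morita-nijenhuis-equations}. Lemma \ref{lem:projectable_tensor_principal_bundle} then yields a projected tensor $J$ on $P$. The $\mathscr{G}$- and $\mathscr{H}$-actions on $Q$ are left and right multiplication on the outer factors. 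They intertwine $N_{1}\times\tilde J\times N_{2}$ with $\tilde J$ by multiplicativity, and this descends through the submersion $Q\to P$. Descending gives \eqref{eq:morita-tensor-action-2}. Condition \eqref{eq:morita-tensor-restriction-2} follows because $\mathrm{d}\mu_{i}\circ J=r_{i}\circ\mathrm{d}\mu_{i}$, which itself descends from $Q$.

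\textbf{Main obstacle.} The delicate step is the descent in (a) $\Rightarrow$ (b). We must verify the hypothesis of Lemma \ref{lem:projectable_tensor_principal_bundle} for the $\mathscr{P}$-action on $Q$, which requires $\tilde J$ to be compatible with $\mathrm{d}$ of that action. That compatibility in turn requires $K$ to be determined by $N_{1}$, $N_{2}$ and $r_{K}$ on the pullback $P_{1}\cong(\varphi\times\varphi)^{*}G_{1}$. The first attempt would be to use the cartesian square, differentiated to $TP_{1}\cong TP_{0}\times_{TG_{0}}TG_{1}\times_{TG_{0}}TP_{0}$. This shows $K$ equals $r_{K}\times N_{1}\times r_{K}$ under this identification, since $K$ commutes with the differentials of $\mathbf{s}$, $\mathbf{t}$ and $\varphi_{1}$. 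Everything else is bookkeeping of conventions.
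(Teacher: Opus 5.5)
Your proposal is correct and follows essentially the paper's proof. For (b)$\Rightarrow$(a) you use the groupoid $G_1\times_{G_0}P\times_{H_0}H_1\rightrightarrows P$ with $K=N_1\times J\times N_2$; for (a)$\Rightarrow$(b) you project $N_1\times r_K\times N_2$ from $G_1\times_{G_0}P_0\times_{H_0}H_1$, which is the paper's $Q_1\times_{P_0}Q_2$, down to the quotient by $\mathscr{P}$ via Lemma \ref{lem:projectable_tensor_principal_bundle}. Your ``main obstacle'' is not actually one: as you already noted earlier, the intertwining for the $\mathscr{P}$-action follows directly from multiplicativity of $N_1$, $N_2$, $K$ and \eqref{eq:morita-nijenhuis-equations}, so the cartesian description of $K$ is not needed.
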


\begin{proof}
 Let $(\mathscr{G},N_{1}) \xleftarrow{\varphi_1} (\mathscr{P},K) \xrightarrow{\varphi_2} (\mathscr{H},N_{2})$ be a span of Morita morphisms equipped with multiplicative $(1,1)$-tensor fields as shown in Diagram \eqref{eq:morita-nijenhuis-diagram}. Assume that these fields satisfy the relations \eqref{eq:morita-nijenhuis-equations}. Let $r_1 \in \Omega^1(G_0, TG_0)$, $r_K \in \Omega^1(P_0, TP_0)$ and $r_2 \in \Omega^1(H_0, TH_0)$ denote the $(1,1)$-tensor fields induced on the bases by $N_{1}$, $K$ and $N_{2}$, respectively. 

 Focusing on the left Morita morphism $(\mathscr{G},N_{1})\gets (\mathscr{P},K)$, let $Q_{1}$ be the fiber product $Q_{1}\coloneqq G_{1}\times_{(\mathbf{s}_{\mathscr{G}},\varphi_{1})}P_{0}$. Identify the tangent bundle of $Q_{1}$ with the fiber product of the tangent bundles: $TQ_{1}\cong TG_{1}\times_{TG_{0}}TP_{0}\subseteq TG_{1}\times  TP_{0}$. Since $N_{1}:T\mathscr{G}\to T\mathscr{G}$ is a groupoid morphism and $\mathrm{d}\varphi_{1} \circ r_{K} = r_{1}\circ\mathrm{d}\varphi_{1}$, the product $N_{1}\times r_{K}$ restricts to the subbundle $TQ_{1} \subseteq TG_{1}\times TP_{0}$. To verify this, consider $(u,v)\in TG_{1}\times TP_{0}$ satisfying $\mathrm{d}\mathbf{s}_{\mathscr{G}}(u)= \mathrm{d}\varphi_{1}(v)$. We compute
\[
  \mathrm{d}\mathbf{s}_{\mathscr{G}}\left( N_{1}( u)  \right) = r_{1}\left( \mathrm{d}\mathbf{s}_{\mathscr{G}}(u) \right) = r_{1}\left( \mathrm{d}\varphi_{1}(v) \right) =   \mathrm{d}\varphi_{1}\left( r_{K}(v) \right).
\]
Consequently, $ \left(N_{1}\times r_{K}\right) \left( TQ_{1} \right) \subseteq TQ_{1}$, and this restriction to $Q_{1}$ defines a $(1,1)$-tensor field $N_{1}\times r_{K}\in \Omega^{1}(Q_{1},TQ_{1})$.

This construction yields a principal bibundle equipped with $(1,1)$-tensor fields, as summarized in the following diagram:
\[
  \begin{tikzcd}
    {\left(G_1,N_{1}\right)} \arrow[d, shift right] \arrow[d, shift left] & {\left(Q_{1} , N_{1}\times r_{K}\right)} \arrow[ld, "\mathbf{t}_{\mathscr{G}} \circ \mathrm{pr}_1"] \arrow[rd, "\mathrm{pr}_2"'] & {\left(P_1,K\right)} \arrow[d, shift right] \arrow[d, shift left] \\
    {\left(G_0,r_{1}\right)}                                              &                                                                                                                        & \left(P_0,r_{K}\right).
  \end{tikzcd}
\]
The left and right actions on $Q_{1}$, $\mathscr{A}_{1}:G_{1}\times_{G_{0}}Q_{1}\to Q_{1}$ and $\widetilde{\mathscr{A}}_{1}:Q_{1}\times_{P_{0}}P_{1}\to Q_{1}$, are defined respectively by
\begin{equation}
    \label{eq:left-action}
  g \cdot  \left( g',p \right) = \left( g g', p \right)\qquad \text{and}\qquad \left( g,p' \right)\cdot p = \left( g \, \varphi_{1}(p), \mathbf{s}_{\mathscr{P}}(p) \right).
\end{equation}
Let ${\bullet}$ denote the multiplication map on the tangent groupoid $T\mathscr{G}$. Since $N_{1}$ is a Lie groupoid morphism from $T\mathscr{G}$ to itself, we have that:
\begin{align*}
    \mathrm{d}\mathscr{A}_{1} \left( N_{1} (u),  N_{1}\times r_{K}(u',v) \right) & = \left( N_{1}(u)\bullet N_{1}(u'), r_{K}(v)  \right)\\
                                                                              & = \left( N_{1}\left( u\bullet u' \right), r_{K}(v) \right)\\
                                                                              & = \left( N_{1}\times r_{K}\right)\left( \mathrm{d}\mathscr{A}_{1}\left( u,(u',v) \right) \right).
\end{align*}
Furthermore, since $K$ is a Lie groupoid morphism and is $\varphi_{1}$-related to $N_{1}$, we have
\begin{align*}
    \mathrm{d}\widetilde{\mathscr{A}}_{1} \left( (N_{1}\times r_{K}(u,v')),K(v) \right)  & = \left( N_{1}(u)\bullet \mathrm{d}\varphi_{1}\left( K(v) \right),\mathrm{d}\mathbf{s}_{\mathscr{P}}\left( K(v) \right) \right)\\
                                                    & = \left( N_{1}(u)\bullet N_{1} (\mathrm{d}\varphi_{1}(v)) , r_{K}\left( \mathrm{d}\mathbf{s}_{\mathscr{P}}(v) \right) \right)\\
                                                    & =  \left( N_{1} \times r_{K}\right) \left( u \bullet\mathrm{d}\varphi_{1} (v) , \mathrm{d}\mathbf{s}_{\mathscr{P}}(v)  \right)\\
                                                    & = \left( N_{1}\times r_{K}\right) \left( \mathrm{d}\widetilde{\mathscr{A}}_{1} \left( (u,v'),v \right) \right).
\end{align*}
In other words,
\begin{align}
  &\mathrm{d}\mathscr{A}_{1}\circ N_{1}\times (N_{1}\times r_{K})= \left( N_{1}\times r_{K} \right)\circ \mathrm{d}\mathscr{A}_{1} \qquad \text{and}\\
  &\mathrm{d}\widetilde{\mathscr{A}}_{1}\circ\left( N_{1}\times r_{K} \right)\times K = \left( N_{1}\times r_{K} \right)\circ\mathrm{d}\widetilde{\mathscr{A}}_{1}. \label{eq:action_red1}
\end{align}
Similarly, consider the Morita morphism $(\mathscr{P},K)\to  (\mathscr{H},N_{2})$ on the right side of the Morita span and define $Q_{2}\coloneqq P_{0}\times_{(\varphi_{2},\mathbf{t}_{\mathscr{H}})}H_{1}$. The analogous construction yields a principal bibundle equipped with $(1,1)$-tensor fields:
\[
    \left( P_{0},r_{K} \right)\xleftarrow{\quad \mathrm{pr}_{1}\quad } \left( Q_{2} , r_{K}\times N_{2}\right) \xrightarrow{\quad \mathbf{s}_{\mathscr{H}}\circ\mathrm{pr}_{2}\quad} \left( H_{0},r_{2}\right).
\]
Here, the left action $\widetilde{\mathscr{A}}_{2}:P_{1}\times_{P_{0}} Q_{2}\to Q_{2}$ and the right action $\mathscr{A}_{2}:Q_{2}\times_{H_{0}}H_{1}\to Q_{2}$ are defined, respectively, by
\begin{equation}
    \label{eq:right-action}
    p\cdot (p',h) = \left( \mathbf{t}_{\mathscr{P}}(p), \varphi_{2}(p) \, h  \right) \qquad \text{and}\qquad (p,h')\cdot h  = (p,h' h).
\end{equation}
We verify that these actions satisfy analogous properties to the first case; that is,
\begin{align}
    &\mathrm{d}\widetilde{\mathscr{A}}_{2}\circ\left( K\times \left( r_{K}\times N_{2} \right) \right)= \left( r_{K}\times N_{2} \right)\circ \mathrm{d}\widetilde{\mathscr{A}}_{2} \qquad \text{and} \\
    &\mathrm{d}\mathscr{A}_{2}\circ \left(\left( r_{K}\times N_{2} \right)\times N_{2}\right) = \left( r_{K}\times N_{2} \right)\circ\mathrm{d}\mathscr{A}_{2}. \label{eq:action_red2}
\end{align}
We equip the fiber product $Q \coloneqq Q_{1}\times _{P_{0}}Q_{2}$ with a $\mathscr{P}$-action $\widetilde{\mathscr{A}}:P_{1}\times_{P_{0}}Q\to Q$ by combining the opposite of $\widetilde{\mathscr{A}}_{1}$ with $\widetilde{\mathscr{A}}_{2}$:
\[
  p\cdot \left( (g,p'),(p'',h) \right) \coloneqq \left( (g,p')\cdot p^{-1}, p\cdot (p'',h) \right).
\]
We define the $(1,1)$-tensor field $\widetilde{J}\in \Omega^{1}(Q,TQ)$ by $\widetilde{J} \coloneqq N_{1} \times r_{K} \times r_{K} \times N_{2}$. By Equations \eqref{eq:action_red1} and \eqref{eq:action_red2}, we obtain the relation
\[
    \mathrm{d}\widetilde{\mathscr{A}} \circ ( K \times \widetilde{J} ) = \widetilde{J} \circ \mathrm{d}\widetilde{\mathscr{A}}.
\]
Therefore, by Lemma \ref{lem:projectable_tensor_principal_bundle}, $\widetilde{J}$ projects to a well-defined $(1,1)$-tensor field $J \in \Omega^{1}(P,TP)$ on the quotient manifold $P \coloneqq Q/\mathscr{P}$.

The pair $(P,J)$ defines the principal bibundle \eqref{eq:morita-bibundle-tensor fields}, whose moment maps are induced by the natural projections $\mathbf{t}_{\mathscr{G}} \circ \mathrm{pr}_1\circ \mathrm{pr}_{Q_{1}}$ and $\mathbf{s}_{\mathscr{H}} \circ \mathrm{pr}_2\circ \mathrm{pr}_{Q_{2}}$. The actions of $\mathscr{G}$ and $\mathscr{H}$ are induced on the quotient by the left and right actions \eqref{eq:left-action} and \eqref{eq:right-action}. Since these actions correspond to left and right multiplications, they satisfy \eqref{eq:morita-tensor-restriction-2} and \eqref{eq:morita-tensor-action-2}.

Conversely, assume there exists a Morita bibundle \eqref{eq:morita-bibundle-tensor fields} satisfying \eqref{eq:morita-tensor-restriction-2} and \eqref{eq:morita-tensor-action-2}. Let $\mathscr{P}=\left( P_{1}\rightrightarrows P \right)$ be the Lie groupoid where $P_{1}\coloneqq G_{1}\times_{(\mathbf{s}_{\mathscr{G}},\mu_{1})}P\times_{(\mu_{2},\mathbf{t}_{\mathscr{H}})}H_{1}$. The source and target maps are $(g,p,h)\mapsto p$ and $(g,p,h)\mapsto g\cdot p\cdot h$, respectively, while the multiplication is $((g,p,h),(g',p',h'))\mapsto (gg',p',hh')$. Then the diagram
\[
  \begin{tikzcd}
G_1 \arrow[d, shift left] \arrow[d, shift right] &  & {G_{1}\times_{(\mathbf{s}_{\mathscr{G}},\mu_{1})}P\times_{(\mu_{2},\mathbf{t}_{\mathscr{H}})}H_{1}} \arrow[d, shift left] \arrow[d, shift right] \arrow[rr, "\mathbf{i}_{\mathscr{H}}\circ \mathrm{pr}_3"] \arrow[ll, "\mathrm{pr}_1"'] &  & H_1 \arrow[d, shift left] \arrow[d, shift right] \\
G_0                                              &  & P \arrow[ll, "\mu_1"'] \arrow[rr, "\mu_2"]                                                                                                                                                                        &  & H_0                                             
\end{tikzcd}
\]
is a span of Lie groupoid morphisms, which are Morita morphisms by construction (due to the free and transitive actions on the fibers). Define the $(1,1)$-tensor field $K\in \Omega^{1}(\mathscr{P},T\mathscr{P})$ given by $K=N_{1}\times J\times N_{2}$, where $J:TP\to TP$ is the map on the base. The tensor field $K$ indeed restricts to the fiber product due to Equation \eqref{eq:morita-tensor-restriction-2} and is a Lie groupoid morphism due to Equation \eqref{eq:morita-tensor-action-2}. Observe that every tangent vector $v\in TP$ can be written as the image of $\mathrm{d}\mathscr{A}$ by considering curves $\gamma(\tau)=u_{1}(\tau)\cdot p(\tau)\cdot u_{2}(\tau)$, where $u_{1}(\tau)=\mathbf{u}_{\mathscr{G}}(\mu_{1}(p)) $ and $u_{2}(\tau)=\mathbf{u}_{\mathscr{G}}(\mu_{2}(p))$. Differentiation at $\tau=0$ yields
\begin{align*}
    \mathrm{d}\mu_{1}\left( J(v) \right) & = \mathrm{d}\mu_{1}\left( J\left(\mathrm{d}\mathscr{A}(u_{1}'(0),v,u_{2}'(0)) \right)\right)\\
                                         & = \mathrm{d}\mu_{1}\left( \mathrm{d}\mathscr{A}_{1}\left( N_{1}(u_{1}'(0)),J(v) \right) \right)\\
                                         & = \mathrm{d}\mathbf{t}_{\mathscr{G}}\left( N_{1}(u_{1}'(0)) \right)\\
                                         & = r_{1}\left( \mathrm{d}\mathbf{t}_{\mathscr{G}}(u_{1}'(0)) \right). 
\end{align*}
The analogous equalities hold for $\mathrm{d}\mu_{2}$. From the condition $\mu_{1}(g\cdot p)=\mathbf{t}_{\mathscr{G}}(g)$ on the action, we obtain:
\begin{equation}
  \label{eq:tensor fields-related-moment-map}
  \mathrm{d}\mu_{1}\circ J = r_{1}\circ \mathrm{d}\mu_{1} \qquad \text{and}\qquad \mathrm{d}\mu_{2}\circ J= r_{2}\circ \mathrm{d}\mu_{2}.
\end{equation}
The arrow components of the Morita morphisms are canonical projections. Thus, Equations \eqref{eq:tensor fields-related-moment-map} imply that $N_{1}$ and $N_{2}$ are related to $K$ through the Morita morphisms, as required by Equation \eqref{eq:morita-nijenhuis-equations}. Consequently, we have a span of Morita morphisms $(\mathscr{G},N_{1})\gets (\mathscr{P},K)\to (\mathscr{H},N_{2})$ as in Diagram \eqref{eq:morita-nijenhuis-diagram}.
\end{proof}

This proof consists in constructing a $(1,1)$-tensor $J\in \Omega^{1}(P,TP)$ on a Morita bibundle from a multiplicative $(1,1)$-tensor field $K\in \Omega^{1}(\mathscr{P},T\mathscr{P})$ on the span of Morita morphisms, and vice versa. From this construction, it follows that $J$ has vanishing Nijenhuis torsion if and only if $K$ does. In the following definition, we refer to either of these as the \textit{intertwining $(1,1)$-tensor field}.
\begin{definition}
  \label{def:morita-nijenhuis}
  Two multiplicative $(1,1)$-tensor fields $N_{1} \in \Omega^{1}(\mathscr{G},T\mathscr{G})$ and $N_{2}\in \Omega^{1}(\mathscr{H},T\mathscr{H})$ are \textbf{Morita equivalent} if they satisfy the conditions of Proposition \ref{prop:morita-equiv-characterization}. Furthermore, if $N_{1}$ and $N_{2}$ are Nijenhuis tensor fields, we say that $(\mathscr{G},N_{1})$ and $(\mathscr{H},N_{2})$ are \textbf{Morita equivalent Nijenhuis groupoids} if the intertwining $(1,1)$-tensor field has vanishing Nijenhuis torsion.
\end{definition}

\begin{remark}
    \label{rmk:morita-nijenhuis-separate}
    Using the individual actions $\mathscr{A}_{1}(g,p)=g\cdot p$ and $\mathscr{A}_{2}(p,h)=p \cdot h$ on the Morita bibundle, we can equivalently express the conditions \eqref{eq:morita-tensor-restriction-2} and \eqref{eq:morita-tensor-action-2}  as 
    \begin{equation}
        \label{eq:morita-tensor-restriction-3}
        N_{1}\times J \left( TG_{1}\times_{TG_{0}}TP  \right)\subseteq TG_{1}\times_{TG_{0}}TP, \qquad J\times N_{2} \left( TP \times_{TH_{0}}TH_{1} \right)\subseteq TP\times_{TH_{0}}TH_{1},
    \end{equation}
    and
    \begin{equation}
        \label{eq:morita-tensor-action-3}
        J\circ \mathrm{d}\mathscr{A}_{1} = \mathrm{d}\mathscr{A}_{1}\circ\left( N_{1}\times J \right), \qquad J\circ \mathrm{d}\mathscr{A}_{2} = \mathrm{d}\mathscr{A}_{2}\circ\left( J\times N_{2} \right).
    \end{equation}
\end{remark}

\begin{example}
  \label{eg:holomorphic-groupoids-morita}
  Two holomorphic groupoids $(\mathscr{G},I_{\mathscr{G}})$ and $(\mathscr{H},I_{\mathscr{H}})$ are Morita equivalent if there exists a span of holomorphic Morita morphisms $\mathscr{G} \xleftarrow{\varphi_{1}} \mathscr{P} \xrightarrow{\varphi_{2}} \mathscr{H}.$ By Proposition \ref{prop:morita-equiv-characterization}, this corresponds to a Morita bibundle $P$ equipped with a complex structure such that the actions of both $\mathscr{G}$ and $\mathscr{H}$ are holomorphic. Moreover, the moment maps $\mu_{1}: P \to G_{0}$ and $\mu_{2}: P \to H_{0}$ are holomorphic as a consequence of \eqref{eq:tensor fields-related-moment-map}. We call this formulation a \textit{holomorphic Morita bibundle}.
\end{example}

\subsection{Morita equivalence is an equivalence relation.}
We show that Morita equivalence of Nijenhuis groupoids is an equivalence relation, extending the established result for the underlying Lie groupoids. Recall that reflexivity is realized by $G_{0} \xleftarrow{\ \mathbf{s}\ } G_{1}\xrightarrow{\ \mathbf{t}\ }G_{0}$, while symmetry follows by considering the opposite action of the groupoids. For transitivity, the composition of two Morita bibundles $G_{0}\gets P\to H_{0}$ and $H_{0}\gets Q\to G_{0}'$ (between Lie groupoids $\mathscr{G}$, $\mathscr{H}$, and $\mathscr{G}'$) is defined by 
\begin{equation}
    \label{eq:quotient}
  P \diamond Q \coloneqq \frac{P \times_{H_{0}} Q}{\mathscr{H}},
\end{equation}
where $\mathscr{H}$ acts on the fiber product $P\times_{H_{0}}Q$ via the \textit{diagonal action}
\[
  h \cdot (p,q) \coloneqq \left( p\cdot h^{-1}, h\cdot q \right).
\]
\begin{theorem}
  \label{thm:morita-equiv-relation}
  Morita equivalence of multiplicative $(1,1)$-tensor fields is an equivalence relation. Moreover, Morita equivalence of Nijenhuis groupoids is also an equivalence relation.
\end{theorem}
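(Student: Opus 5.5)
We work throughout with the bibundle formulation (b) of Proposition \ref{prop:morita-equiv-characterization}, in the separated form of Remark \ref{rmk:morita-nijenhuis-separate}, and check reflexivity, symmetry and transitivity in turn.

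\emph{Reflexivity.} For $(\mathscr{G},N)$ we take the bibundle $G_{0}\xleftarrow{\mathbf{t}}G_{1}\xrightarrow{\mathbf{s}}G_{0}$, with $\mathscr{G}$ acting by left and right multiplication, and the intertwining tensor $J\coloneqq N$. Since $N\colon T\mathscr{G}\to T\mathscr{G}$ is a groupoid morphism, it commutes with $\mathrm{d}\mathbf{s}$ and $\mathrm{d}\mathbf{t}$ via $r_{N}$. This gives \eqref{eq:morita-tensor-restriction-3}. It also intertwines the tangent multiplication, $N(u\bullet v)=N(u)\bullet N(v)$, which is \eqref{eq:morita-tensor-action-3}. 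If $N$ is Nijenhuis, then $J=N$ has vanishing torsion.

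\emph{Symmetry.} Given a bibundle $(P,J)$ from $\mathscr{G}$ to $\mathscr{H}$, the opposite bibundle $\bar P$ has the same manifold, moment maps swapped, and actions $h\cdot p\coloneqq p\cdot h^{-1}$ and $p\cdot g\coloneqq g^{-1}\cdot p$. We keep the tensor $J$ unchanged. The inversion of $T\mathscr{G}$ is $\mathrm{d}\mathbf{i}$, and a multiplicative $N$ commutes with it because a groupoid morphism preserves inverses. Hence the conditions \eqref{eq:morita-tensor-restriction-3} and \eqref{eq:morita-tensor-action-3} for $\bar P$ follow from those for $P$ after composing with $\mathrm{d}\mathbf{i}$. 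Torsion is unchanged since $J$ is unchanged.

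\emph{Transitivity.} Let $(P,J_{P})$ be a bibundle between $(\mathscr{G},N_{1})$ and $(\mathscr{H},N_{2})$, and $(Q,J_{Q})$ one between $(\mathscr{H},N_{2})$ and $(\mathscr{G}',N_{3})$. The fibre product $P\times_{H_{0}}Q$ has tangent bundle $TP\times_{TH_{0}}TQ$. By \eqref{eq:tensor fields-related-moment-map}, $J_{P}$ and $J_{Q}$ are both related to $r_{2}$ through the moment maps, so $J_{P}\times J_{Q}$ preserves this tangent bundle and defines $\widetilde J\in\Omega^{1}(P\times_{H_{0}}Q)$. Let $\mathscr{D}(h,(p,q))=(p\cdot h^{-1},h\cdot q)$ be the diagonal action. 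Its differential is $(w,(u,v))\mapsto(\mathrm{d}\mathscr{A}_{2}(u,\mathrm{d}\mathbf{i}(w)),\mathrm{d}\mathscr{A}^{Q}_{1}(w,v))$. Using the action identities \eqref{eq:morita-tensor-action-3} for $P$ and $Q$, together with $N_{2}\circ\mathrm{d}\mathbf{i}=\mathrm{d}\mathbf{i}\circ N_{2}$, we obtain
\[
\widetilde J\circ\mathrm{d}\mathscr{D}=\mathrm{d}\mathscr{D}\circ(N_{2}\times\widetilde J).
\]
The diagonal action is principal with quotient $P\diamond Q$, so Lemma \ref{lem:projectable_tensor_principal_bundle} gives a projected tensor $J_{P\diamond Q}$. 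If $N_{2}$, $J_{P}$ and $J_{Q}$ are Nijenhuis, then $\widetilde J$ is Nijenhuis, because the torsion of a product tensor restricted to a fibre product (a submanifold preserved by the tensor) is the restriction of the product of torsions. The same lemma then makes $J_{P\diamond Q}$ Nijenhuis.

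It remains to check that the induced $\mathscr{G}$- and $\mathscr{G}'$-actions on $P\diamond Q$ satisfy \eqref{eq:morita-tensor-restriction-3} and \eqref{eq:morita-tensor-action-3}. We do this upstairs. The $\mathscr{G}$-action on $P\times_{H_{0}}Q$ is $g\cdot(p,q)=(g\cdot p,q)$, and it intertwines $N_{1}\times\widetilde J$ with $\widetilde J$ by the corresponding identity for $P$; the $\mathscr{G}'$-action is treated the same way. Let $\mathrm{pr}\colon P\times_{H_{0}}Q\to P\diamond Q$ be the quotient map. It is a surjective submersion and $\mathrm{d}\mathrm{pr}\circ\widetilde J=J_{P\diamond Q}\circ\mathrm{d}\mathrm{pr}$. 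Every tangent vector to $G_{1}\times_{G_{0}}(P\diamond Q)$ lifts through $\mathrm{id}\times\mathrm{pr}$, so both identities descend to $P\diamond Q$.

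The main obstacle is this descent step and the verification of the diagonal-action identity. The delicate point is the inverse in $p\cdot h^{-1}$, which requires the compatibility of multiplicative $(1,1)$-tensors with $\mathrm{d}\mathbf{i}$. This compatibility follows because $N_{2}$ is a groupoid morphism of $T\mathscr{H}$.
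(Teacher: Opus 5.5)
Your proposal is correct and follows essentially the same route as the paper. For reflexivity you use the bibundle $G_0\xleftarrow{\mathbf{t}}G_1\xrightarrow{\mathbf{s}}G_0$ with $J=N$, and for symmetry the opposite bibundle with $J$ unchanged. For transitivity you restrict $J_P\times J_Q$ to $P\times_{H_0}Q$, project it to $P\diamond Q$ via Lemma \ref{lem:projectable_tensor_principal_bundle} using the diagonal-action intertwining identity, and conclude the Nijenhuis part by naturality of the Nijenhuis torsion. You also make explicit two points the paper leaves implicit, and both are sound: that $N_2$ commutes with $\mathrm{d}\mathbf{i}$, which handles the $h^{-1}$ in the diagonal action, and the lifting argument showing that the $\mathscr{G}$- and $\mathscr{G}'$-identities descend to $P\diamond Q$.
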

 \begin{proof}
  For reflexivity, let $\mathscr{G}$ be a Lie groupoid equipped with a multiplicative $(1,1)$-tensor field $N \in \Omega^{1}(\mathscr{G},T\mathscr{G})$. Because the actions of the principal bibundle $G_{0}\xleftarrow{\ \mathbf{t}\ } G_{1} \xrightarrow{\ \mathbf{s}\ }G_{0}$ correspond to left and right multiplication, and because $N$ is multiplicative, the tensor fields $N$ and $N\times N \times N$ satisfy \eqref{eq:morita-tensor-action-2}.

  To verify that Morita equivalence is symmetric, suppose that $G_{0}\gets P \to H_{0}$ is a Morita equivalence between $(\mathscr{G},N)$ and $(\mathscr{H},K)$. It follows that $H_{0}\gets P \to G_{0}$, equipped with the opposite actions, constitutes a Morita equivalence between $(\mathscr{H},K)$ and $(\mathscr{G},N)$.

  We now establish transitivity. The proof proceeds by observing that, given two Morita bibundles $P$ and $Q$ sharing a Lie groupoid $\mathscr{H}$, the direct product of the tensor fields on the bibundles restricts to the fiber product $P\times_{H_{0}}Q$ and descends to the quotient of the fiber product by the diagonal action of $\mathscr{H}$. Let
  \[
    \begin{tikzcd}
      {(G_1,N)} \arrow[d, shift right] \arrow[d, shift left] & (P,J) \arrow[ld, "\mu_1"'] \arrow[rd, "\mu_2"] & {(H_1,K)} \arrow[d, shift right] \arrow[d, shift left] \\
      {(G_0,r_N)}                                            &                                                & {(H_0,r_K)}
    \end{tikzcd}\qquad \text{and}\qquad
    \begin{tikzcd}
      {(H_1,K)} \arrow[d, shift right] \arrow[d, shift left] & (Q,J') \arrow[ld, "\mu_2'"'] \arrow[rd, "\mu_3"] & {(G_1',N')} \arrow[d, shift right] \arrow[d, shift left] \\
      {(H_0,r_{K})}                                          &                                                  & {(G_0',r_{N'})}
    \end{tikzcd}
  \]
  be Morita bibundles connecting $(\mathscr{G},N)$, $(\mathscr{H},K)$, and $(\mathscr{G}',N')$. To verify that $J\times J'$ restricts to the fiber product $P\times_{H_0}Q$, recall from \eqref{eq:tensor fields-related-moment-map} that $\mathrm{d}\mu_{2}(J(u)) = r_{K}(\mathrm{d}\mu_{2}(u))$ and $\mathrm{d}\mu_{2}'(J'(v)) = r_{K}(\mathrm{d}\mu_{2}'(v))$ for every $u\in TP$ and $v\in TQ$. If $(u,v)\in TP\times_{TH_{0}}TQ$, then
  \[
    \mathrm{d}\mu_{2}(J(u))= r_{K}(\mathrm{d}\mu_{2}(u)) = r_{K}(\mathrm{d}\mu_{2}'(v)) = \mathrm{d}\mu_{2}'(J'(v)).
  \]
  Therefore, the tensor field $J\times J'$ restricts to $TP\times_{TH_{0}}TQ$, and we have $J\times J'\in \Omega^{1}(P\times_{H_{0}}Q,TP\times_{TH_{0}}TQ)$.

To project $J \times J'$ to the quotient $P \diamond Q$, let $\mathscr{A} \colon H_{1} \times_{H_{0}} (P \times_{H_{0}} Q) \to P \times_{H_{0}} Q$ denote the diagonal action of $\mathscr{H}$. By \eqref{eq:morita-tensor-action-3}, this action satisfies the intertwining relation
\[
    \mathrm{d}\mathscr{A} \circ \left( K \times (J \times J') \right) = (J \times J') \circ \mathrm{d}\mathscr{A}.
\]
Consequently, by Lemma \ref{lem:projectable_tensor_principal_bundle}, $J \times J'$ projects to a well-defined $(1,1)$-tensor field $J \diamond J'$ on the quotient manifold $P \diamond Q$.

  The actions of $\mathscr{G}$ and $\mathscr{G}'$ on $P\diamond Q$ are induced by their respective actions on $P\times_{H_{0}}Q$. These actions descend to the quotient because they commute with the diagonal action of $\mathscr{H}$. Furthermore, these actions satisfy Equation \eqref{eq:morita-tensor-action-2} with respect to the tensor field $J\diamond J'$, proving that $(\mathscr{G},N)$ and $(\mathscr{G}',N')$ are Morita equivalent.

  To conclude the proof for Nijenhuis groupoids, assume that $J$ and $J'$ are Nijenhuis tensor fields. Then the product $J\times J'$ is also Nijenhuis (i.e., its Nijenhuis torsion $\mathscr{N}_{J\times J'}$ vanishes). Since $J\times J'$ is related to $J\diamond J'$, the naturality of the Nijenhuis torsion implies that $\mathscr{N}_{J\diamond J'} =0$, which completes the proof.
\end{proof}
In particular, we recover the well-known fact that Morita equivalence of holomorphic groupoids is an equivalence relation. It suffices to observe that the condition $(J\times J')^{2} = -\mathrm{id}_{T(P\times_{H_{0}}Q)}$ descends to the quotient by the diagonal action. 

\section{Infinitesimal Nijenhuis structures}
\label{sec:nijenhuis-lie-algebroids}
In this section, we study the infinitesimal version of the notion of Morita equivalence for Nijenhuis groupoids that we developed in Section \ref{sec:nijenhuis-groupoids}. We begin by defining \textit{infinitesimal Nijenhuis structures} and providing an equivalent characterization in terms of \textit{$1$-derivations}. We define the notion of Morita equivalence of infinitesimal Nijenhuis structures in Definition \ref{def:morita-equivalence-linear-tensor}, and in Theorem \ref{thm:morita-nijenhuis-groupoid-to-algebroid} we show that it corresponds to the differentiation of the global Morita equivalence of Nijenhuis groupoids.

By directly applying the Lie functor to a Nijenhuis groupoid, we obtain the following definition.
\begin{definition}
    \label{def:nijenhuis-lie-algebroid}
    An \textbf{infinitesimal Nijenhuis structure} is a pair $(A,R)$, where $A\to M$ is a Lie algebroid and $R\in \Omega^{1}(A,TA)$ is a Nijenhuis tensor field on the total space of $A$, such that $R:TA\to TA$ is a morphism of Lie algebroids from the tangent Lie algebroid to itself.
\end{definition}
In particular, $R$ is a \textit{linear $(1,1)$-tensor field} on the vector bundle $A\to M$, i.e., $R:TA\to TA$ is a vector bundle morphism from the tangent prolongation to itself:
\[
        \begin{tikzcd}
            TA \arrow[d] \arrow[r, "R"] & TA \arrow[d] \\
            TM \arrow[r, "r"]           & TM          \ , 
        \end{tikzcd}
    \]
for some $r\in \Omega^{1}(M,TM)$. Linear $(1,1)$-tensor fields that are morphisms of Lie algebroids are the infinitesimal counterparts of multiplicative $(1,1)$-tensor fields, they are called \textit{IM (infinitesimally multiplicative) $(1,1)$-tensor fields} \cite{bursztyn-drummond2019multiplicative}. These objects can be characterized in algebraic terms as \textit{$1$-derivations} as we explain in the following subsection.

\subsection{Characterization of IM tensor fields via 1-derivations}
A \textit{$1$-derivation} on a vector bundle $\pi:E\to M$ is a triple $\mathscr{D}=\left( \nabla,\ell , r\right)$, where  $r\in \Omega^{1}(M,TM)$ is a vector-valued $1$-form, $\ell \in \mathrm{End}(E)$ is an endomorphism of $E$, and $\nabla:\Gamma(E)\to \Omega^{1}(M,E)$ is an $\mathbb{R}$-linear map satisfying the \textit{Leibniz rule}
\begin{equation}
    \label{eq:leibniz}
    \nabla_{v}(f\xi) =f \nabla_{v}(\xi) + \left( \mathscr{L}_{v}f \right)\ell (\xi) - \left( \mathscr{L}_{r(v)}f \right)\xi,
\end{equation}
  for every $v \in \mathfrak{X}(M)$, $\xi \in \Gamma(E)$ and $f\in C^{\infty}(M)$.
\subsubsection{Correspondence between linear tensor fields and $1$-derivations} To make this correspondence explicit, we first recall the definition of the \textit{vertical lift}. For a section $\xi\in \Gamma(E)$, its vertical lift is the vector field $\xi^{\uparrow}\in \mathfrak{X}(E)$ defined by
\begin{equation}
  \label{eq:vertical-lift}
  \xi^{\uparrow}(e)\coloneqq \left. \frac{\mathrm{d}}{\mathrm{d}\tau}\right|_{\tau=0} \left( e +\tau \xi_{\pi(e)} \right).
\end{equation}
Linear $(1,1)$-tensor fields $R:TE\to TE$ are in one-to-one correspondence with  $1$-derivations $\mathscr{D}= \left( \nabla,\ell ,r \right)$ through the relations  \cite[Corollary 4.11]{bursztyn-drummond2019multiplicative}
  \begin{equation}
    \label{eq:1-der-linear-tensor}
    \nabla_{\mathrm{d}\pi(v)}(\xi) = \left( \mathscr{L}_{\xi^{\uparrow}}R \right)(v) , \quad R\left( \xi^{\uparrow} \right) = \ell (\xi)^{\uparrow}\quad \text{and}\quad\left\langle  R , \pi^{*}\alpha\right\rangle = \pi^{*}\left\langle  r,\alpha \right\rangle,
  \end{equation}
  for every $\xi \in \Gamma(E)$, $v\in TE$ and $\alpha\in \Omega^{1}(M)$.
\subsubsection{The Nijenhuis condition in terms of 1-derivations}
We can express the vanishing of the Nijenhuis torsion for a linear tensor field $R\in \Omega^{1}(E,TE)$ in terms of $1$-derivations. The tensor field $R$ is a Nijenhuis tensor field if and only if its corresponding $1$-derivation $(\nabla,\ell ,r)$ satisfies the equations
\begin{equation}
    \label{eq:nijenhuis-1-derivation}
    \begin{cases}
        \mathscr{N}_{r}(u,v)= 0, \\
        \ell \left( \nabla_{v}(\xi) \right)- \nabla_{v}\left( \ell (\xi) \right)= 0 ,\\
        \nabla^2_{(u,v)}(\xi) \coloneqq  \ell \left( \nabla_{\left[ u,v \right]}(\xi) \right) - \left[ \nabla_{u},\nabla_{v} \right](\xi)- \nabla_{\left[ u,v \right]_{r}} (\xi) = 0,
    \end{cases} 
\end{equation}
where $[u,v]_{r}\coloneqq [r(u),v]+[u,r(v)] - r\big([u,v]\big)$. We call Equations \eqref{eq:nijenhuis-1-derivation} the \textbf{Nijenhuis equations}.
\subsubsection{Infinitesimally multiplicative 1-derivations} The IM condition for $(1,1)$-tensor fields can be expressed equivalently in the language of $1$-derivations as follows. A $1$-derivation $\mathscr{D}=\left( \nabla,\ell ,r \right)$ is said to be an \textbf{IM $1$-derivation} if it satisfies the so-called \textbf{IM-equations} \cite[Section 6]{bursztyn-drummond2019multiplicative}:
\begin{equation}
  \label{eq:im-equations}
  \begin{cases}
    \nabla_{v}\left( \left[ \xi,\zeta \right]_{A} \right) = \left[ \nabla_{v}(\xi),\zeta \right]_{A}+\left[ \xi,\nabla_{v}(\zeta) \right]_{A}-\nabla_{\left[ \rho(\zeta),v \right]_{A}}(\xi) - \nabla_{\left[ \rho(\xi),v \right]_{A}}(\zeta)\\
    \ell \left( \left[ \xi,\zeta \right]_{A} \right) = \left[ \xi,\ell (\zeta) \right]_{A} - \nabla_{\rho(\zeta)}(\xi)\\
    \rho\left( \nabla_{v}(\xi) \right) = \nabla^{r}_{v}\left( \rho(\xi) \right),\\
    r\circ \rho = \rho \circ \ell .
  \end{cases} 
\end{equation}

An infinitesimal Nijenhuis structure on  a Lie algebroid $A\to M$ can now be reformulated as an IM $1$-derivation $\mathscr{D}=(\nabla,\ell,r)$ satisfying the Nijenhuis equations \eqref{eq:nijenhuis-1-derivation}.

\begin{example}
  \label{eg:tangent-cotangent-lift}
  The \textit{tangent lift} $r^{\mathrm{tg}}\in \Omega^{1}(TM,T(TM))$ of a given $(1,1)$-form $r\in \Omega^{1}(M,TM)$ is a linear $(1,1)$-tensor field on $TM\to M$  (Equation \eqref{eq:tangent-lift}) is always an IM tensor field (\cite[Corollary 3.5]{drummond2022lie}). We consider also the \textit{cotangent lift} $r^{\mathrm{cotg}}\in \Omega^{1}(T^{*}M,T(T^{*}M))$, it is a linear tensor field constructed as follows. Let $\omega_{\mathrm{can}}\in \Omega^{2}(T^{*}M)$ be the canonical symplectic form on $T^{*}M$ and $\phi_{r}\coloneqq r^{*} :T^{*}M\to T^{*}M$. Then $r^{\mathrm{cotg}}$ is defined by
  \[
    \iota_{r^{\mathrm{cotg}}(v)}\omega_{\mathrm{can}} = \iota_{v}\left( \phi^{*}_{r}\omega_{\mathrm{can}} \right), \qquad  v\in T(T^{*}M).
  \]
  If $r$ is a Nijenhuis tensor field, then both $r^{\mathrm{tg}}$ and $r^{\mathrm{cotg}}$ are Nijenhuis tensor fields. To define the corresponding $1$-derivations of $r^{\mathrm{tg}}$ and $r^{\mathrm{cotg}}$ consider the non-linear operators $\nabla^{r}:\mathfrak{X}(M)\to \Omega^{1}(M,TM)$ and $\nabla^{r,*}:\Omega^{1}(M)\to \Omega^{1}(M,T^{*}M)$ given by
 \[
   \nabla^{r}_{v}(u) = \iota_{v}\left( \nabla^{r}(u) \right) = \left( \mathscr{L}_{u}r \right)(v)  \quad \text{and}\quad \nabla_{v}^{r,{*}}(\alpha) = \iota_{v}\left( \nabla^{r,*}(\alpha) \right)\coloneqq \mathscr{L}_{v}r^{*}(\alpha)-\mathscr{L}_{r(v)}\alpha.
 \]
 More explicitly, $\nabla^{r}_{v}(u)=\left[ u,r(v) \right] - r \left( \left[ u,v \right]\right)$ and $\nabla^{r,*}_{v}(\alpha) = \iota_{v}\mathrm{d}(r^{*}\alpha) - \iota_{r(v)}\mathrm{d}\alpha$. The corresponding $1$-derivations to the tangent and cotangent lifts are 
 \[
   \mathscr{D}^{r} = \left( \nabla^{r},r,r \right) \qquad \text{and} \qquad \mathscr{D}^{r,*} = \left( \nabla^{r,*},r^{*},r \right),
 \]
 respectively \cite[Section 3]{drummond2022lie}.
\end{example}
\begin{example}
  \label{eg:holomorphic-vector-bundle}
  A holomorphic vector bundle can be characterized as a (real) vector bundle $E\to M$, together with a complex structure $r \in \Omega^{1}(M,TM)$ on $M$, an endomorphism $\ell :E\to E$ such that $\ell^2=-\mathrm{id}_{E}$ and a flat $T^{(1,0)}M$-connection $\nabla':\Gamma\left( T^{(1,0)}M \right)\times \Gamma(E)\to \Gamma(E)$ on the complex vector bundle $(E,\ell)$. Equivalently, the holomorphic structure of $E$ is encoded in the $1$-derivation $\mathscr{D}^{\mathrm{hol}}=\left( \nabla^{\mathrm{hol}},\ell ,r \right)$, where 
  \[
      \nabla_{v}^{\mathrm{hol}} (u) \coloneqq \ell \left(\nabla'_{v+ir(v)} u \right).
  \]
  Moreover, a $1$-derivation $\mathscr{D}=\left( \nabla,\ell ,r \right)$ defines a holomorphic vector bundle if and only if
    \begin{equation}
     \label{eq:holomorphic-1-derivations}
      r^{2} = -\mathrm{id}_{TM},\quad \ell^2 = - \mathrm{id}_E \quad \text{and}\quad \ell(\nabla_v(u))+\nabla_{r(v)}(u)=0,
    \end{equation}
  and the Nijenhuis conditions \eqref{eq:nijenhuis-1-derivation} are satisfied (see \cite[Example 2.3]{bursztyn-drummond-netto2022dirac}).
\end{example}
 \begin{example}
    \label{eg:holomorphic-1-derivations}
    A \textit{holomorphic Lie algebroid} \cite{chemla1999duality,weinstein2007integration} is a holomorphic vector bundle $A\to M$ equipped with a holomorphic anchor and  a Lie bracket that satisfies the Leibniz identity. Equivalently, it is a (real) Lie algebroid equipped with an IM $(1,1)$-tensor field $I_{A}\in \Omega^{1}(A,TA)$ with vanishing Nijenhuis torsion \cite[Proposition 2.3]{laurent2009integration}. In the language of $1$-derivations, the holomorphic structure $I_{A}$ corresponds to an IM $1$-derivation $(\nabla,\ell ,r)$  satisfying \eqref{eq:holomorphic-1-derivations}.
\end{example}

\subsection{The Lie correspondence for (1,1)-tensor fields}

Let $\mathscr{G}$ be a Lie groupoid with Lie algebroid $A_{\mathscr{G}}$. The tangent Lie algebroid $TA_{\mathscr{G}}\to TM$ is integrable to the tangent Lie groupoid $T\mathscr{G}$ (specifically, via the canonical identification $TA_{\mathscr{G}}\cong A_{T\mathscr{G}}$; see \cite[Theorem 7.1]{mackenzie1994lie}). Assuming that $\mathscr{G}$ is source-simply connected, Lie's second theorem guarantees that any IM $(1,1)$-tensor field $R:TA_{\mathscr{G}}\to TA_{\mathscr{G}}$, viewed as a Lie algebroid morphism, integrates to a unique Lie groupoid morphism, yielding a multiplicative $(1,1)$-tensor field $R_{\mathscr{G}}\in \Omega^{1}(\mathscr{G},T\mathscr{G})$. The Lie functor is the inverse of this integration process, establishing a bijection between multiplicative $(1,1)$-tensor fields and IM tensor fields. Furthermore, this bijection preserves the Nijenhuis condition on the $(1,1)$-tensor fields (see \cite[Theorem 3.14]{laurent2009integration}).

\begin{lemma}
  \label{lemma:tensor fields-correspondence-nijenhuis}
  Let $\mathscr{G}$ be a source-simply connected Lie groupoid. The Lie functor establishes a bijection between multiplicative $(1,1)$-tensor fields $K\in \Omega^{1}(\mathscr{G},T\mathscr{G})$ on $\mathscr{G}$ and IM $(1,1)$-tensor fields $R_{K}\in \Omega^{1}(A_{\mathscr{G}},TA_{\mathscr{G}})$ on $A_{\mathscr{G}}$. Moreover, $K $ is Nijenhuis if and only if its corresponding $R_{K}$ is Nijenhuis.
\end{lemma}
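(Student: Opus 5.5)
The bijection itself follows from Lie's second theorem applied to the tangent groupoid, as recalled just before the statement. Since $\mathscr{G}$ is source-simply connected, so is $T\mathscr{G}$: its source fibers are the tangent bundles of the source fibers of $\mathscr{G}$, which are homotopy equivalent to them. Under the canonical identification $A_{T\mathscr{G}}\cong TA_{\mathscr{G}}$, a multiplicative $(1,1)$-tensor field $K$ is exactly a Lie groupoid morphism $T\mathscr{G}\to T\mathscr{G}$ that is also a vector bundle map over $\mathrm{id}_{G_{1}}$. Its differential $R_{K}\coloneqq \mathrm{Lie}(K)$ is then a Lie algebroid morphism $TA_{\mathscr{G}}\to TA_{\mathscr{G}}$.

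One must check that $R_{K}$ is a linear $(1,1)$-tensor field on $A_{\mathscr{G}}$. This is done by differentiating the fiberwise linearity of $K$, i.e.\ $K$ commutes with the scalar multiplications $m_{\lambda}$ on $TG_{1}$, which are groupoid morphisms. Conversely, Lie II integrates an IM tensor field $R$ to a unique groupoid morphism $K_{R}\colon T\mathscr{G}\to T\mathscr{G}$. Uniqueness of integration shows that $K_{R}$ commutes with each $m_{\lambda}$ and covers the identity on $G_{1}$, since $R$ covers the identity on $A_{\mathscr{G}}$ and $\mathscr{G}$ is source-connected. Hence $K_{R}$ is fiberwise linear, i.e.\ a $(1,1)$-tensor field. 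Uniqueness in Lie II also gives that the two constructions are mutually inverse.

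For the Nijenhuis statement, I would use that the Nijenhuis torsion of a multiplicative tensor is again multiplicative. More precisely, $\mathscr{N}_{K}\colon \oplus^{2}T\mathscr{G}\to T\mathscr{G}$ is a groupoid morphism, and its Lie functor image is $\mathscr{N}_{R_{K}}$ under the identification $A_{\oplus^{2}T\mathscr{G}}\cong \oplus^{2}TA_{\mathscr{G}}$. Given this, the two directions go as follows.
\begin{itemize}
\item If $\mathscr{N}_{K}=0$, then $\mathscr{N}_{R_{K}}=\mathrm{Lie}(\mathscr{N}_{K})=0$.
\item If $\mathscr{N}_{R_{K}}=0$, then $\mathscr{N}_{K}$ and the zero tensor are both multiplicative $(1,2)$-tensor fields with the same infinitesimal counterpart. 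The same uniqueness argument (Lie II on the source-simply connected groupoid $\oplus^{2}T\mathscr{G}$) forces $\mathscr{N}_{K}=0$.
\end{itemize}

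The main obstacle is justifying $\mathrm{Lie}(\mathscr{N}_{K})=\mathscr{N}_{R_{K}}$, since the torsion involves Lie brackets of vector fields and not just pointwise composition. My first approach is to test the torsion on generating vector fields. Source-fiber tangent vectors are spanned by right-invariant fields $\overrightarrow{\xi}$, and $T_{g}G_{1}$ is generated by these together with multiplicative lifts of base vectors. On the algebroid side, I would test on vertical lifts $\xi^{\uparrow}$ and complete (tangent-lift) vector fields. The bracket of right-invariant fields corresponds to the algebroid bracket, and linear vector fields correspond to multiplicative ones, so evaluating $\mathscr{N}_{K}$ on such fields and differentiating along the unit section should yield $\mathscr{N}_{R_{K}}$ on the corresponding linear and core sections.

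As an alternative, the equivalence is proved in \cite[Theorem 3.14]{laurent2009integration} for general multiplicative $(1,1)$-tensors, and not only almost complex ones, through exactly this compatibility of the Frölicher--Nijenhuis bracket with the Lie functor. If the direct computation becomes unwieldy, I would invoke that argument.
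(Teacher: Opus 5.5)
Your proposal is correct and follows the paper's route: the paper gives no separate proof beyond the preceding paragraph, which applies Lie's second theorem to the tangent groupoid via $TA_{\mathscr{G}}\cong A_{T\mathscr{G}}$ and cites \cite[Theorem 3.14]{laurent2009integration} for the Nijenhuis part, so your extra checks (source-simple connectivity of $T\mathscr{G}$, fiberwise linearity via the scalar multiplications $m_{\lambda}$, and uniqueness applied to the multiplicative torsion $\mathscr{N}_{K}$) make explicit what the paper leaves implicit. Two small corrections: if you attempt the direct computation, multiplicative vector fields covering a given base vector need not exist (they require a derivation of $A_{\mathscr{G}}$ with that symbol, which fails e.g.\ for bundles of Lie algebras whose isomorphism type varies), so test instead on $\mathbf{t}$-projectable vector fields as in \eqref{eq:IM-derivation-groupoid}; and the source fibers of $T\mathscr{G}$ are affine bundles over those of $\mathscr{G}$, modeled on their tangent bundles, rather than the tangent bundles themselves, which does not affect your homotopy argument.
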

In particular, there is a one-to-one correspondence between (source-simply connected) Nijenhuis groupoids and (integrable) infinitesimal Nijenhuis structures. The IM $1$-derivation $\mathscr{D}=\left( \nabla,\ell ,r \right)$ on $A_{\mathscr{G}}$ corresponding to a multiplicative  $(1,1)$-tensor field $K\in \Omega^{1}\left( \mathscr{G},T\mathscr{G} \right)$ is defined by the equations
\begin{equation}
  \label{eq:IM-derivation-groupoid}
  K\left( \overrightarrow{\xi} \right) = \overrightarrow{\ell (\xi)},\quad \mathrm{d}\mathbf{t}\left( K(v)  \right) = r\left( \mathrm{d}\mathbf{t}(v) \right) \quad \text{and}\quad \left( \mathscr{L}_{\overrightarrow{\xi}}K\right)(v) = \overrightarrow{\nabla_{\mathrm{d}\mathbf{t}(v)}(\xi)},
\end{equation}
for every $\xi \in \Gamma(A_{\mathscr{G}})$ and $\mathbf{t}$-projectable vector fields $v \in \mathfrak{X}\left(G_{1}\right)$.
 
\subsubsection{Integration of tangent lifts} The tangent bundle $TP\to P$ of a manifold $P$, equipped with its canonical Lie algebroid structure, integrates to the pair groupoid $\mathrm{Pair}(P)= (P\times P\rightrightarrows  P)$. The \textbf{tangent lift} $J^{\mathrm{tg}}\in \Omega^{1}(TP,T(TP))$ of a vector-valued $1$-form $J\in \Omega^{1}(P,TP)$ is a $(1,1)$-tensor field on $TP$ given by the conjugation of $\mathrm{d}J$ by the \textit{canonical involution} $\kappa_{P}:T(TP)\to T(TP)$; that is,
  \begin{equation}
    \label{eq:tangent-lift}
    J^{\mathrm{tg}}\coloneqq \kappa_{P}\circ\mathrm{d}J\circ \kappa_{P}:T(TP)\to T(TP).
  \end{equation}
  The tangent lift $J^{\mathrm{tg}}$ is an IM $(1,1)$-tensor field on $TP$. We now focus on its integration to a multiplicative $(1,1)$-tensor field on $\mathrm{Pair}(P)$, because it is used in the integration of Morita equivalences.
\begin{lemma}
  \label{lemma:integration-tangent-lift}
  For every $J \in \Omega^{1}(P,TP)$, the tensor field $J\times J \in \Omega^{1}(P\times P,T(P\times P))$ is a multiplicative tensor field in the pair groupoid $P\times P\rightrightarrows P$ which differentiates to the tangent lift $J^{\mathrm{tg}}$. In particular, $J^{\mathrm{tg}}$ is an IM tensor field.
\end{lemma}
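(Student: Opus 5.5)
The lemma has two parts: $J\times J$ is multiplicative on the pair groupoid, and its derivative is $J^{\mathrm{tg}}$. The final claim, that $J^{\mathrm{tg}}$ is IM, then follows because the Lie functor sends multiplicative $(1,1)$-tensor fields to IM ones (Lemma \ref{lemma:tensor fields-correspondence-nijenhuis}, or more directly because differentiating a groupoid morphism $T\mathrm{Pair}(P)\to T\mathrm{Pair}(P)$ gives an algebroid morphism $TTP\to TTP$ under $A_{T\mathscr{G}}\cong TA_{\mathscr{G}}$).

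\emph{Multiplicativity.} The tangent groupoid of $\mathrm{Pair}(P)$ is $T(P\times P)=TP\times TP\rightrightarrows TP$, which is just the pair groupoid of $TP$ with the following structure maps:
\begin{itemize}
\item source and target are $(u,v)\mapsto v$ and $(u,v)\mapsto u$;
\item multiplication is $(u,v)\bullet(v,w)=(u,w)$;
\item units are $u\mapsto (u,u)$.
\end{itemize}
Any map $F:TP\to TP$ induces a morphism $F\times F$ of pair groupoids covering $F$. Taking $F=J$ shows that $J\times J$ is a Lie groupoid morphism covering $r=J$, so it is multiplicative.

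\emph{Differentiation.} I would use the characterization \eqref{eq:IM-derivation-groupoid} with $\mathscr{G}=\mathrm{Pair}(P)$ and $A=TP$. Here the right-invariant vector field of $X\in\mathfrak{X}(P)$ is $\overrightarrow{X}=(X,0)$, with the convention that $\mathbf{t}$ is the first projection. The identities are then checked one at a time.
\begin{itemize}
\item $(J\times J)(X,0)=(J(X),0)=\overrightarrow{J(X)}$, so $\ell=J$.
\item $\mathrm{d}\mathbf{t}\circ (J\times J)=J\circ \mathrm{d}\mathbf{t}$, so $r=J$.
\item Take a $\mathbf{t}$-projectable field $v=(Y,Z)$ with $Y$, $Z$ vector fields on $P$. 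Then $(\mathscr{L}_{(X,0)}(J\times J))(Y,Z)=((\mathscr{L}_XJ)(Y),0)=\overrightarrow{(\mathscr{L}_XJ)(Y)}$. Every $\mathbf{t}$-projectable field is locally a $C^\infty(P\times P)$-combination in the second slot, and tensoriality in $v$ handles this. Hence $\nabla_Y X=(\mathscr{L}_XJ)(Y)=\nabla^J_Y(X)$.
\end{itemize}
So the IM $1$-derivation of $J\times J$ is $(\nabla^J,J,J)=\mathscr{D}^J$. By Example \ref{eg:tangent-cotangent-lift} this is the $1$-derivation of the tangent lift $J^{\mathrm{tg}}$, and since the correspondence \eqref{eq:1-der-linear-tensor} is bijective, $\mathrm{Lie}(J\times J)=J^{\mathrm{tg}}$.

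\emph{Main obstacle.} The delicate point is consistency of conventions. One must check that $\mathscr{D}^J=(\nabla^J,J,J)$ really is the $1$-derivation of $\kappa_P\circ\mathrm{d}J\circ\kappa_P$ with the sign conventions in \eqref{eq:1-der-linear-tensor}. One must also check that the identification $A_{T\mathscr{G}}\cong TA$ uses the same involution $\kappa_P$. If I did not want to rely on Example \ref{eg:tangent-cotangent-lift}, I would verify this directly. I would compute $J^{\mathrm{tg}}$ on vertical lifts, where $J^{\mathrm{tg}}(X^{\uparrow})=J(X)^{\uparrow}$ follows from linearity of $J$, and on complete lifts, where $J^{\mathrm{tg}}(Y^{\mathrm{tg}})$ can be computed by differentiating the flow. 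Comparing these with \eqref{eq:1-der-linear-tensor} would confirm the identification.
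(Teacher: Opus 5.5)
Your proof is correct. The multiplicativity step matches the paper's: the tangent groupoid of $\mathrm{Pair}(P)$ is $\mathrm{Pair}(TP)$, and $J\times J$ is a pair-groupoid morphism covering $J$. The differentiation step, however, takes a different route.

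\textbf{The paper's route.} It differentiates geometrically. $\mathrm{Lie}(J\times J)$ is the restriction of $\mathrm{d}J\times\mathrm{d}J$ to $A_{T\mathscr{G}}\subseteq\ker\mathrm{d}\mathbf{s}_{T\mathscr{G}}$. Transporting this through the Mackenzie--Xu isomorphism $TA_{\mathscr{G}}\cong A_{T\mathscr{G}}$ (for the pair groupoid, the restriction of $\kappa_P\times\kappa_P$) gives $\kappa_P\circ\mathrm{d}J\circ\kappa_P$. This is literally the definition \eqref{eq:tangent-lift}.

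\textbf{Your route.} You compute the IM $1$-derivation of $J\times J$ from \eqref{eq:IM-derivation-groupoid} and obtain $(\nabla^J,J,J)$. You then match it with the $1$-derivation of $J^{\mathrm{tg}}$ quoted in Example \ref{eg:tangent-cotangent-lift}.

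\textbf{Comparison.} The paper needs only Mackenzie--Xu and no $1$-derivation machinery. Yours avoids all double-vector-bundle bookkeeping with $\kappa_P$. It also outputs exactly the triple $(\nabla^J,J,J)$ used later in the proof of Proposition \ref{prop:morita-equiv-1-derivation}. The price is that you import two results whose conventions must agree.

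\textbf{The conventions do agree.} At $(x,\dot x)\in TP$, $J^{\mathrm{tg}}$ sends $(\delta x,\delta\dot x)$ to $(J\delta x,\ \dot x^k\partial_kJ\,\delta x+J\delta\dot x)$. Hence $J^{\mathrm{tg}}(X^{\uparrow})=J(X)^{\uparrow}$, and $J^{\mathrm{tg}}(Y^{\mathrm{tg}})=(JY)^{\mathrm{tg}}$ for complete lifts. Combined with $[X^{\uparrow},Y^{\mathrm{tg}}]=[X,Y]^{\uparrow}$, this gives $(\mathscr{L}_{X^{\uparrow}}J^{\mathrm{tg}})(Y^{\mathrm{tg}})=\big((\mathscr{L}_XJ)(Y)\big)^{\uparrow}$, in agreement with your groupoid-side computation. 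Moreover, for right-invariant fields the map $(X,0)\mapsto X$ is an isomorphism of Lie algebroids, not an anti-isomorphism, so no sign enters.

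\textbf{Two minor points.}
\begin{enumerate}
\item Your reduction to product fields is best justified as follows: $\mathscr{L}_{\overrightarrow{X}}(J\times J)$ is a tensor, and it vanishes on $(0,Z)$.
\item For the final IM claim, rely on your direct argument that the Lie functor sends groupoid morphisms to algebroid morphisms. Lemma \ref{lemma:tensor fields-correspondence-nijenhuis} assumes source-simple connectedness, which $\mathrm{Pair}(P)$ has only when $P$ is connected and simply connected.
\end{enumerate}
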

\begin{proof}
Observe that $J\times J$ is a Lie groupoid morphism (with base map $J$) on the tangent Lie groupoid. Consequently, it is a multiplicative $(1,1)$-tensor field on $\mathscr{G}$ by definition. 

We next show that $J^{\mathrm{tg}}$ is the differentiation of $J\times J$ under the Lie functor. Theorem 7.1 of \cite{mackenzie1994lie} establishes a canonical isomorphism between the double vector bundles $T\left( A_{\mathscr{G}} \right)$ and $A_{T\mathscr{G}}$ for any Lie groupoid $\mathscr{G}$, which is an isomorphism of Lie algebroids. This isomorphism is the restriction of the canonical involution $\kappa_{G_{1}}:T(TG_{1})\to T(TG_{1})$ to a map between $TA_{\mathscr{G}}=T \left(\mathbf{u}_{\mathscr{G}}^{*}\operatorname{ker} \mathrm{d}\mathbf{s}_{\mathscr{G}}\right)$ and  $A_{T\mathscr{G}}=\mathbf{u}_{T\mathscr{G}}^{*}\operatorname{ker} \mathrm{d} \mathbf{s}_{T\mathscr{G}}$. In the present case, $G_{1}  = P\times P$ and $G_{0}= P$, whereas $A_{\mathscr{G}} = TP\to P$ is the tangent bundle of $P$ and $A_{T\mathscr{G}} = T(TP)\to TP$ is the tangent Lie algebroid. It follows that $\kappa_{G_{1}}=\kappa_{P}\times \kappa_{P}$, and the isomorphism corresponds to the upper arrow in the following diagram:
\[
  \begin{tikzcd}
      TA_{\mathscr{G}} \arrow[d, hook] \arrow[rr, "\kappa_{P}"]       &      & A_{T\mathscr{G}} \arrow[d, hook]                        \\
      T \operatorname{ker} \mathrm{d}\mathbf{s}_{\mathscr{G}} \arrow[rr, "\kappa_{P}\times\kappa_{P}"] &  & \operatorname{ker} \mathrm{d} \mathbf{s}_{T\mathscr{G}}
\end{tikzcd}
\]
Consider the map $\mathrm{d}J \times\mathrm{d}J :TG_{1}\times TG_{1}\to  TG_{1}\times TG_{1}$. Since $\mathrm{d}\mathbf{s}_{\mathscr{G}}:(u,v)\mapsto v$, the map $\mathrm{d}J\times \mathrm{d}J$ clearly preserves $T\operatorname{ker}\mathrm{d}\mathbf{s}_{\mathscr{G}}$. Because $\kappa_{P}$ is involutive, we obtain the following commutative diagram on the left, which restricts to the commutative diagram on the right:
\[
\begin{tikzcd}
T \operatorname{ker} \mathrm{d}\mathbf{s}_{\mathscr{G}}  \arrow[rr, "\mathrm{d}J \times\mathrm{d}J"]            &  &  T \operatorname{ker} \mathrm{d}\mathbf{s}_{\mathscr{G}}  \arrow[d, "\kappa_P\times\kappa_P"] \\
 \operatorname{ker} \mathrm{d} \mathbf{s}_{T\mathscr{G}} \arrow[rr, dashed] \arrow[u, "\kappa_P\times\kappa_P"] &  & \operatorname{ker} \mathrm{d} \mathbf{s}_{T\mathscr{G}}                                      
\end{tikzcd}
 \qquad\longleftarrow\joinrel\rhook\qquad 
\begin{tikzcd}
TA_{\mathscr{G}} \arrow[rr, "\mathrm{d}J"]                &  & TA_{\mathscr{G}} \arrow[d, "\kappa_P"] \\
A_{T\mathscr{G}} \arrow[u, "\kappa_P"] \arrow[rr, dashed] &  & A_{T\mathscr{G}}                      
\end{tikzcd}
\]
Therefore, $J\times J$ induces the morphism $J^{\mathrm{tg}}= \kappa_{P}\circ \mathrm{d}J\circ\kappa_{P}$ on the Lie algebroid $A_{T\mathscr{G}}$, which concludes the proof.
\end{proof}
\begin{remark}
  \label{rmk:tangent-lift-integration}
  Every IM $(1,1)$-tensor field $R\in \Omega^{1}(TP,T(TP))$ on a tangent bundle $TP\to P$ is of the form $R=J^{\mathrm{tg}}$ for some $J\in \Omega^{1}(P,TP)$ (see \cite[Corollary 3.5]{drummond2022lie}).
\end{remark}
 
\subsection{Morita equivalence of infinitesimal structures}

In this subsection, we introduce a notion of Morita equivalence for IM $(1,1)$-tensor fields on Lie algebroids. While distinct notions of Morita equivalence for Lie algebroids exist in the literature—defined, for instance, via isomorphic pullback Lie algebroids \cite{ginzburg2001grothendieck} or in terms of preserved geometric data \cite{villatoro2018poisson}—we adopt a simplified definition (Definition \ref{def:morita-equiv-algebroid}) resulting from differentiating the global formulation of Morita equivalence via principal bibundles. This approach is analogous to the construction of Morita equivalence for Poisson manifolds, which is obtained by differentiating Morita equivalences of symplectic groupoids \cite{xu1991morita}. Consequently, our framework is built upon \textit{infinitesimal actions}. Because this differentiation yields complete actions, this definition applies exclusively to integrable Lie algebroids (see Remark \ref{rmk:daniel-complete}), as is the case for Poisson structures.

An \textbf{infinitesimal left action} (or simply \textit{action}) of a Lie algebroid $A\to M$ on a smooth map $\mu:P\to M$ is a Lie algebra morphism $\mathscr{a}:\Gamma(A)\to \mathfrak{X}(P)$ satisfying
\begin{equation}
  \label{eq:lie-algebroid-action}
  \rho(\xi)= \mathrm{d}\mu\left(\mathscr{a}(\xi)\right)\qquad \text{and}\qquad \mathscr{a}\left( f\xi \right) = (\mu^{*} f)\mathscr{a}(\xi),
\end{equation}
for every $\xi \in \Gamma(A)$ and $f \in C^{\infty}(M)$. The second condition, referred to as \textit{$C^{\infty}(M)$-linearity}, ensures that the action induces a vector bundle map $\mathscr{a}:\mu^{*}A\to TP$ covering the identity. The action is said to be \textbf{complete} if the vector field $\mathscr{a}(\xi)$ on $P$ is complete for every compactly supported section $\xi\in  \Gamma(A)$. 

A right infinitesimal action of a Lie algebroid $A\to M$ is defined as a left action of the \textit{opposite Lie algebroid} $A^{\mathrm{op}}$. For a given Lie algebroid $(A,[-,-]_{A},\rho)$, its opposite Lie algebroid $A^{\mathrm{op}}$ consists of the same vector bundle $A\to M$ equipped with the Lie bracket $[\xi,\zeta]_{A^{\mathrm{op}}}=-[\xi,\zeta]_{A}$ and anchor map $\rho_{\mathrm{op}}\coloneqq -\rho$. 
\begin{definition}
  \label{def:morita-equiv-algebroid}
  Two Lie algebroids $A_{1}\to M_{1}$ and $A_{2}\to M_{2}$ are said to be \textbf{Morita equivalent} if there are two surjective submersions
  \[
    M_{1}\xleftarrow{\quad \mu_{1}\quad } P\xrightarrow{\quad \mu_{2} \quad} M_{2},
  \]
  both with connected and simply-connected fibers, together with two \textit{complete} infinitesimal actions $\mathscr{a}_{1}:\Gamma(A_{1})\to \mathfrak{X}(P)$ and $\mathscr{a}_{2}:\Gamma(A_{2}^{\mathrm{op}})\to \mathfrak{X}(P)$, satisfying the following properties:
\begin{enumerate}[label=(\alph*)]
   \item The actions commute, i.e., for all $\xi\in \Gamma(A_{1})$ and $\zeta\in \Gamma(A_{2}^{\mathrm{op}})$,
\begin{equation}
 \label{eq:algebroid-morita-commuting}
  \left[ \mathscr{a}_{1}(\xi),\mathscr{a}_{2}(\zeta) \right]=0. 
\end{equation}
  \item For every $p\in P$, the bundle maps $\mathscr{a}_{1\, p}:\left(\mu_{1}^{*}A_{1}\right)_{p}\to T_{p}P$ and $\mathscr{a}_{2\, p}:\left(\mu_{2}^{*}A_{2}\right)_{p}\to T_{p}P$ are  injective, and
\begin{equation}
  \label{eq:algebroid-morita-fibers}
  \operatorname{im}\left(\mathscr{a}_{1 \, p}\right) = \operatorname{ker}\mathrm{d}\mu_{2\, p} \qquad \text{and}\qquad \operatorname{im} \left(\mathscr{a}_{2\, p }\right)= \operatorname{ker}\mathrm{d}\mu_{1\, p} .
\end{equation}
\end{enumerate}
In this setting, we call $P$ an \textbf{infinitesimal Morita $(A_1, A_2)$-bibundle}.
\end{definition}
\begin{remark}
  \label{rmk:daniel-complete}
  The existence of a complete action of a Lie algebroid on a surjective submersion is equivalent to the integrability of the algebroid (\cite[Theorem 4.1]{alvarez2021complete} and \cite[Theorem 8]{crainic-fernandes2004integrability}). Consequently, Definition \ref{def:morita-equiv-algebroid} applies exclusively to integrable Lie algebroids. This ensures that infinitesimal Morita equivalences of Lie algebroids correspond to Morita equivalences of the respective source-simply connected Lie groupoids (Lemma \ref{lemma:correspondence-morita}).  In contrast, the notion of Morita equivalence in \cite{ginzburg2001grothendieck} is defined by requiring the pullback Lie algebroids $\mu_{1}^{!}A_{1}$ and $\mu_{2}^{!}A_{2}$ to be isomorphic (with simply-connected $\mu_{i}$-fibers), a condition that arises from differentiating Morita morphisms rather than bibundles.
\end{remark}
\begin{remark}
    \label{rmk:total_action}
    The conditions for Morita equivalence of Lie algebroids allow us to construct an action of the product algebroid $A_{1} \times A_{2}^{\mathrm{op}}$ on the submersion $\mu \coloneqq (\mu_{1}, \mu_{2}) \colon P \to M_{1} \times M_{2}$. This action is defined by
    \begin{equation}
        \label{eq:sum-of-actions}
        \Gamma(A_{1}) \oplus \Gamma(A_{2}^{\mathrm{op}}) \to \mathfrak{X}(P), \qquad (\xi_{1}, \xi_{2}) \mapsto \mathscr{a}_{1}(\xi_{1}) + \mathscr{a}_{2}(\xi_{2}),
    \end{equation}
    and extended by $C^{\infty}(M_{1} \times M_{2})$-linearity to a Lie algebroid morphism $\mathscr{a} \colon \Gamma(A_{1} \times A_{2}^{\mathrm{op}}) \to \mathfrak{X}(P)$. We refer to this morphism as the \textit{total (infinitesimal) action}, and we denote it by $\mathscr{a} = \mathscr{a}_{1} + \mathscr{a}_{2}$.
\end{remark}
It is known that a complete action of a Lie algebroid integrates to a global action of the corresponding source-simply connected Lie groupoid  (see \cite[Theorem 2.1]{mokri1996lie} and \cite[Theorem 5.3]{moerdijk2002integrability}). More precisely, if $\mathscr{a}:\Gamma(A)\to  \mathfrak{X}(P)$ is a complete action of $A\to M$ on $\mu:P\to M$, then there exists a unique action $\mathscr{A}:G\times_{(\mathbf{s},\mu)}P\to P$ of the source-simply connected Lie groupoid $G\rightrightarrows M$ integrating $A$. The infinitesimal and global actions are related by
\begin{equation}
    \label{eq:integration-action}
    \mathscr{a}(\xi)_{p} = \left. \frac{\mathrm{d}}{\mathrm{d}\tau}\right|_{\tau=0} \mathscr{A}(\varphi(\tau),p),
    \end{equation}
    for any $\xi\in \Gamma(A)$. Here, $\varphi(\tau)$ is a path in the source-fiber $\mathbf{s}^{-1}(\mu(p))$ such that $\dot{\varphi}(0) = \overrightarrow{\xi}_{\mu(p)} $. The action $\mathscr{A}$ is called the \textit{integration of the infinitesimal action} $\mathscr{a}$. The conditions of Definition \ref{def:morita-equiv-algebroid} are formulated so that this correspondence between global and infinitesimal actions establishes an equivalence between the respective notions of Morita equivalence.
\begin{lemma}
    \label{lemma:correspondence-morita}
    Let $\mathscr{G}=\left( G_{1}\rightrightarrows G_{0} \right)$ and $\mathscr{H}=\left( H_{1}\rightrightarrows H_{0} \right)$ be Lie groupoids, and let $A_{\mathscr{G}}$ and $A_{\mathscr{H}}$ be their associated Lie algebroids.
    \begin{enumerate}[label=(\alph*)]
        \item If $\mathscr{G}$ and $\mathscr{H}$ are Morita equivalent via a Morita $(\mathscr{G},\mathscr{H})$-bibundle $P$, then $A_{\mathscr{G}}$ and $A_{\mathscr{H}}$ are Morita equivalent. In this case, $P$ serves as the infinitesimal Morita $(A_{\mathscr{G}},A_{\mathscr{H}})$-bibundle, with infinitesimal actions induced by the global actions via \eqref{eq:integration-action}.
        \item Conversely, suppose $\mathscr{G}$ and $\mathscr{H}$ are source-simply connected. If $A_{\mathscr{G}}$ and $A_{\mathscr{H}}$ are Morita equivalent via an infinitesimal Morita $(A_{\mathscr{G}},A_{\mathscr{H}})$-bibundle $P$, then $\mathscr{G}$ and $\mathscr{H}$ are Morita equivalent. In this case, $P$ serves as the Morita $(\mathscr{G},\mathscr{H})$-bibundle, with global actions obtained by integrating the infinitesimal actions according to \eqref{eq:integration-action}.
    \end{enumerate}
\end{lemma}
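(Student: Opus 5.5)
For part (a), I will start from the global actions $\mathscr{A}_{1}$ of $\mathscr{G}$ and $\mathscr{A}_{2}$ of $\mathscr{H}$ on $P$. I differentiate them via \eqref{eq:integration-action}: for $\xi\in\Gamma(A_{\mathscr{G}})$, set $\mathscr{a}_{1}(\xi)_{p}=\mathrm{d}\mathscr{A}_{1}(\overrightarrow{\xi}_{\mu_{1}(p)},0_{p})$, and define $\mathscr{a}_{2}$ analogously using left-invariant vector fields, which accounts for the opposite algebroid.

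I then check the conditions of Definition \ref{def:morita-equiv-algebroid} one at a time.
\begin{itemize}
\item The $\mathscr{a}_{i}$ are Lie algebra morphisms, $C^{\infty}$-linear, and anchored. This is the standard fact that a groupoid action induces an algebroid action.
\item Completeness follows because the flow of $\mathscr{a}_{1}(\xi)$ is $p\mapsto \phi^{\tau}_{\overrightarrow{\xi}}(\mathbf{1}_{\mu_{1}(p)})\cdot p$. The right-invariant field of a compactly supported section is complete.
\item Commutativity \eqref{eq:algebroid-morita-commuting} holds because the global actions commute, so the flows commute.
\item For injectivity and \eqref{eq:algebroid-morita-fibers}, freeness of the $\mathscr{G}$-action makes $g\mapsto g\cdot p$ an injective immersion of $\mathbf{s}^{-1}(\mu_{1}(p))$. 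Transitivity on the fibres of $\mu_{2}$ makes this map a diffeomorphism onto $\mu_{2}^{-1}(\mu_{2}(p))$. Differentiating at the unit gives $\operatorname{im}\mathscr{a}_{1\,p}=\ker \mathrm{d}\mu_{2\,p}$ with $\mathscr{a}_{1\,p}$ injective.
\end{itemize}

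The hypothesis in Definition \ref{def:morita-equiv-algebroid} that the fibres are connected and simply connected needs care. Via the diffeomorphism above, the $\mu_{2}$-fibres are $\mathbf{s}$-fibres of $\mathscr{G}$. So this condition holds exactly when $\mathscr{G}$ and $\mathscr{H}$ are source-simply connected. I would either add that hypothesis to (a) or note that (a) implicitly assumes it. The remaining steps go through without it.

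For part (b), I will integrate the complete actions $\mathscr{a}_{1}$ and $\mathscr{a}_{2}$ using the cited results of Mokri and Moerdijk–Mrčun. Because $\mathscr{G}$ and $\mathscr{H}$ are source-simply connected, this gives global actions $\mathscr{A}_{1}$ and $\mathscr{A}_{2}$ with moment maps $\mu_{1}$ and $\mu_{2}$.

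\begin{itemize}
\item \emph{Invariance of the moment maps.} The identity $\mathrm{d}\mu_{1}(\mathscr{a}_{2}(\zeta))=0$ follows from \eqref{eq:algebroid-morita-fibers}. Together with connectedness of the $\mathbf{s}$-fibres, this gives $\mu_{1}(p\cdot h)=\mu_{1}(p)$, since each $h$ is reached by a path tangent to right-invariant flows. The same argument works for $\mu_{2}$.
\item \emph{Commutation of the global actions.} The infinitesimal actions commute, so the total action of Remark \ref{rmk:total_action} of $A_{\mathscr{G}}\times A_{\mathscr{H}}^{\mathrm{op}}$ is a complete action. It integrates to an action of the source-simply connected groupoid $\mathscr{G}\times\mathscr{H}^{\mathrm{op}}$. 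Restricting to $\mathscr{G}\times\{\mathbf{1}\}$ and $\{\mathbf{1}\}\times\mathscr{H}^{\mathrm{op}}$ recovers $\mathscr{A}_{1}$ and $\mathscr{A}_{2}$ by uniqueness, so they commute.
\end{itemize}

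The main obstacle is principality: $\mathscr{G}$ must act freely and transitively on the fibres of $\mu_{2}$. Fix $p$ and consider the orbit map $\theta_{p}:\mathbf{s}^{-1}(\mu_{1}(p))\to \mu_{2}^{-1}(\mu_{2}(p))$, $g\mapsto g\cdot p$.

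\begin{itemize}
\item \emph{Local diffeomorphism.} By equivariance, $\mathrm{d}\theta_{p}$ at $g$ is identified with $\mathscr{a}_{1\,g\cdot p}$ on right-invariant vectors. By (b) of the definition this is an isomorphism onto $\ker\mathrm{d}\mu_{2}$, so $\theta_{p}$ is a local diffeomorphism between manifolds of equal dimension.
\item \emph{Covering map.} I plan to show $\theta_{p}$ has the path-lifting property. A path in the $\mu_{2}$-fibre is the integral curve of a time-dependent vertical vector field, which equals $\mathscr{a}_{1}$ of a time-dependent section. Completeness of the action lets this section be lifted to a path in the $\mathbf{s}$-fibre, by the same construction used to integrate complete actions. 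Alternatively, the image of $\theta_{p}$ is open by the previous step and closed as a union of orbits in the connected fibre, so $\theta_{p}$ is surjective.
\item \emph{Conclusion.} A covering of a simply connected, connected fibre by a connected space is a diffeomorphism. This gives freeness and transitivity.
\end{itemize}

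The symmetric argument handles $\mathscr{H}$. Surjectivity and the submersion property of $\mu_{1}$ and $\mu_{2}$ are part of the hypotheses. Hence $P$ is a Morita $(\mathscr{G},\mathscr{H})$-bibundle.
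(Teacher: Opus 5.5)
The paper gives no proof of this lemma. It only points to the integration of complete actions (Mokri; Moerdijk--Mr\v{c}un) and remarks that Definition \ref{def:morita-equiv-algebroid} is tailored to make the correspondence work. Your proposal supplies the missing argument along exactly those lines, and it is essentially correct. In particular, the orbit-map argument in (b) shows where the fibre hypotheses are actually used: \eqref{eq:algebroid-morita-fibers} makes $\theta_{p}$ a local diffeomorphism, path lifting makes it a covering, and simple connectivity of the $\mu_{2}$-fibres makes it bijective. The paper never makes this explicit.

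Your caveat on (a) is a genuine correction: as stated, (a) is false. Take $\mathscr{G}=\mathrm{Pair}(S^{1})$ and $\mathscr{H}$ the trivial groupoid over a point, which are Morita equivalent via $P=S^{1}$. Let $P'$ be any infinitesimal Morita $(TS^{1},0)$-bibundle. Then \eqref{eq:algebroid-morita-fibers} gives $\ker\mathrm{d}\mu_{1}=0$, so $\mu_{1}$ is a local diffeomorphism with connected fibres, hence $P'\cong S^{1}$. The unique $\mu_{2}$-fibre is then $P'$ itself, which is not simply connected. So source-simple connectedness must be added to (a), which is consistent with Remark \ref{rmk:daniel-complete}. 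Note that Theorem \ref{thm:morita-nijenhuis-groupoid-to-algebroid} invokes (a) without this hypothesis.

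Two steps of (b) need tightening.

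First, completeness of the total action does not follow merely from $[\mathscr{a}_{1}(\xi),\mathscr{a}_{2}(\zeta)]=0$. A compactly supported section of $A_{\mathscr{G}}\times A_{\mathscr{H}}^{\mathrm{op}}$ over $G_{0}\times H_{0}$ is generally not of the form $(\xi,\zeta)$, so its image is not a sum of two commuting complete fields. The paper makes the same leap in the proof of Theorem \ref{thm:morita-diracnij-to-sympnij}. It is simpler to prove commutation directly. Fix $h$ with $\mathbf{t}(h)=y$. Choosing a path from $\mathbf{1}_{y}$ to $h$ in $\mathbf{t}^{-1}(y)$, right multiplication by $h$ on $\mu_{2}^{-1}(y)$ is the time-one map of a time-dependent field $\mathscr{a}_{2}(\zeta_{t})$. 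That map preserves every $\mathscr{a}_{1}(\xi_{s})$. By invariance of the moment maps and uniqueness of integral curves, it therefore sends $s\mapsto g(s)\cdot q$ to $s\mapsto g(s)\cdot(q\cdot h)$.

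Second, your ``alternatively'' clause only yields surjectivity of $\theta_{p}$, i.e.\ transitivity. A surjective local diffeomorphism onto a simply connected manifold need not be injective, so freeness still needs your path-lifting argument. An alternative route to freeness: $\theta_{p}$ is the quotient of $\mathbf{s}^{-1}(\mu_{1}(p))$ by right translation by the discrete isotropy group $\mathscr{G}_{p}$, hence a covering, which forces $\mathscr{G}_{p}$ to be trivial. Also, that path lifting uses only that $A_{\mathscr{G}}$-paths integrate to $\mathscr{G}$-paths, together with uniqueness of integral curves; completeness of the action plays no role there.
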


\subsubsection{Infinitesimal Morita equivalence of linear $(1,1)$-tensor fields}
To define the Morita equivalence of IM $(1,1)$-tensor fields, we must first introduce the pullback of linear $(1,1)$-tensor fields. Let $E \to M$ be a vector bundle, and let $R \colon TE \to TE$ be a linear $(1,1)$-tensor field on $E$ covering a $(1,1)$-tensor field $r \colon TM \to TM$. Furthermore, let $\mu \colon P \to M$ be a smooth map, and let $J \colon TP \to TP$ be a $(1,1)$-tensor field on $P$ that is $\mu$-related to $r$.
\begin{definition}
The \textbf{pullback $(1,1)$-tensor field} $\mu^{*}R: T(\mu^{*}E) \to T(\mu^{*}E)$ of $R$ (with respect to $J$) is the linear $(1,1)$-tensor field on $\mu^{*}E \to P$ covering $J$ defined by:
\begin{equation}
    \label{eq:pullback_tensor}
    (\mu^{*}R)(v, X) = (J(v), R(X))
\end{equation}
for every $(v, X) \in TP \times_{TM} TE \cong T(\mu^* E)$.
\end{definition}
Since $J$ and $r$ are $\mu$-related and $R$ covers $r$, the pair $(J(v), R(X))$ remains in the fiber product, ensuring the map is well-defined.

Now we extend the notion of Morita equivalence for Lie algebroids to a relation between IM $(1,1)$-tensor fields. In the following, $R_{1}\in \Omega^{1}(A_{1},TA_{1})$ and $R_{2}\in \Omega^{1}(A_{2},TA_{2})$ are IM $(1,1)$-tensor fields with base maps $r_{1}:TM_{1}\to TM_{1}$ and $r_{2}:TM_{2}\to TM_{2}$, respectively.

\begin{definition}
  \label{def:morita-equivalence-linear-tensor}
  A \textbf{Morita equivalence of IM $(1,1)$-tensor fields} $R_{1}$ and $R_{2}$ consists of a Morita $(A_{1},A_{2})$-bibundle
  \[
      M_1 \xleftarrow{\quad \mu_{1}\quad } (P,J) \xrightarrow{\quad \mu_{2}\quad } M_{2},
   \]
   equipped with a linear $(1,1)$-tensor field $J\in \Omega^{1}(P,TP)$ that is $\mu_{i}$-related to $r_{i}$ for $i=1,2$ and satisfies
  \begin{equation}
    \label{eq:morita-equivalence-linear-tensor}
    \mathrm{d}\mathscr{a}\circ \mu^{*}(R_{1}\times R_{2}) = J^{\mathrm{tg}}\circ\mathrm{d}\mathscr{a},
  \end{equation}
  where $\mathscr{a} = \mathscr{a}_{1}+\mathscr{a}_{2}$ denotes the total action and $\mu\coloneqq (\mu_{1},\mu_{2})$.
  This relation is called a \textbf{Morita equivalence of Nijenhuis IM tensor fields} if $R_{1}$, $R_{2}$, and $J$ are Nijenhuis tensor fields.
\end{definition}

\begin{remark}
  \label{rmk:morita-equivalence-linear-tensor}
  Instead of equation $\eqref{eq:morita-equivalence-linear-tensor}$, we can write the equivalent condition
  \begin{equation}
    \label{eq:morita-equiv-linear-graph}
    R_{1}\times R_{2}\times J^{\mathrm{tg}} \left( T \mathrm{Graph}(\mathscr{a}) \right) \subseteq T\mathrm{Graph}(\mathscr{a}).
  \end{equation}
\end{remark}
\begin{example}
  \label{eg:morita-holomorphic-algebroid}
  Consider two holomorphic Lie algebroids, $(A_{1},I_{1})$ and $(A_{2},I_{2})$, and suppose they are Morita equivalent via a bibundle $M_{1}\xleftarrow{\ \mu_{1}\ } (P,J)\xrightarrow{\ \mu_{2}\ } M_{2}$, where $(P,J)$ is a complex manifold. One can verify that the tangent lift of $J$ is a complex structure on  $TP$. Consequently, Equation \eqref{eq:morita-equivalence-linear-tensor} guarantees that the infinitesimal action $\mathscr{a}:\Gamma(A_{1}\times A_{2}^{\mathrm{op}})\to \mathfrak{X}(P)$ is holomorphic. 
\end{example}
 
\subsubsection{Infinitesimal Morita equivalence of $1$-derivations.}  
We now translate the formulation of Morita equivalence of IM tensor fields into the algebraic language of $1$-derivations. An advantage of this formulation is that it avoids the need to explicitly perform the differential of the infinitesimal actions by separating the tensorial conditions from the differential ones. As seen below, Equations \eqref{eq:base_1-der_morita} and \eqref{eq:end_1-der_morita} are purely tensorial; the differential content is entirely isolated within condition \eqref{eq:connection_1-der_morita} for the connection-like components.

To fix notation for the following definition, let $\mathscr{D}_{1}=(\nabla^{1},\ell_{1},r_{1})$ and $\mathscr{D}_{2}=(\nabla^{2},\ell_{2},r_{2})$ be IM $1$-derivations on the Lie algebroids $A_1\to M_{1}$ and $A_2\to M_{2}$, respectively. Additionally, let $\mathscr{a}_{1}:\Gamma(A_{1})\to \mathfrak{X}(P)$ and $\mathscr{a}_{2}:\Gamma(A_{2}^{\mathrm{op}})\to \mathfrak{X}(P)$ denote the infinitesimal actions, and set $\mathscr{a}\coloneqq \mathscr{a}_{1}+\mathscr{a}_{2}$.

\begin{definition}
\label{def:morita-equiv-1-derivations}
A \textbf{Morita equivalence of $1$-derivations} $\mathscr{D}_{1}=(\nabla^{1},\ell_{1},r_{1})$ and $\mathscr{D}_{2}=(\nabla^{2},\ell_{2},r_{2})$ consists of a Morita $(A_{1},A_{2})$-bibundle $M_{1}\xleftarrow{\ \mu_{1}\ } P \xrightarrow{\ \mu_{2}\ } M_{2}$, together with a $(1,1)$-tensor field $J\in \Omega^{1}(P,TP)$, satisfying
\begin{numcases}{}
      \mathrm{d}\mu \circ J = (r_{1}\times r_{2})\circ \mathrm{d}\mu\label{eq:base_1-der_morita}\\
      J\circ \mathscr{a} = \mathscr{a}\circ (\ell_{1}\times \ell_{2}) \label{eq:end_1-der_morita}\\
      \nabla_{v}^{J}\left( \mathscr{a}_{1}(\xi)  + \mathscr{a}_{2}(\zeta) \right) = \mathscr{a}_{1}\left( \nabla^{1}_{\mathrm{d}\mu_{1}(v)} (\xi) \right) + \mathscr{a}_{2}\left( \nabla^{2}_{\mathrm{d}\mu_{2}(v)} (\zeta) \right) \label{eq:connection_1-der_morita}
\end{numcases}
for every $v\in TP$, $\xi\in \Gamma(A_{1})$ and $\zeta \in \Gamma(A_{2})$.
\end{definition}

\begin{proposition}
  \label{prop:morita-equiv-1-derivation}
  A Morita equivalence of Lie algebroids $M_{1}\xleftarrow{\ \mu_{1}\ }(P,J)\xrightarrow{\ \mu_{2}\ }M_{2}$ equipped with a $(1,1)$-tensor field $J\in \Omega^{1}(P,TP)$ is a Morita equivalence of IM $(1,1)$-tensor fields $R_{1}\in \Omega^{1}(A_{1},TA_{1})$ and $R_{2}\in \Omega^{1}(A_{2},TA_{2})$ if and only if it is a Morita equivalence of the respective $1$-derivations $\mathscr{D}_{1}$ and $\mathscr{D}_{2}$.
\end{proposition}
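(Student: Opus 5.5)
The plan is to reduce the condition \eqref{eq:morita-equivalence-linear-tensor} to identities between sections, using the correspondence \eqref{eq:1-der-linear-tensor} between linear $(1,1)$-tensor fields and $1$-derivations. I will treat the total action $\mathscr{a}=\mathscr{a}_{1}+\mathscr{a}_{2}$ as a vector bundle morphism $\mathscr{a}\colon \mu^{*}(A_{1}\times A_{2})\to TP$ covering $\mathrm{id}_{P}$. Then \eqref{eq:morita-equivalence-linear-tensor} says exactly that $\mathscr{a}$ intertwines the linear tensor fields $\mu^{*}(R_{1}\times R_{2})$ and $J^{\mathrm{tg}}$. (The sign from $A_{2}^{\mathrm{op}}$ is harmless, since $R_{2}$ is the same linear tensor field on the underlying vector bundle.) The first step is to record the $1$-derivations of both sides:
\begin{itemize}
\item The tangent lift $J^{\mathrm{tg}}$ corresponds to $\mathscr{D}^{J}=(\nabla^{J},J,J)$, by Example \ref{eg:tangent-cotangent-lift}.
\item The product $R_{1}\times R_{2}$ corresponds to $(\nabla^{1}\oplus\nabla^{2},\ell_{1}\times\ell_{2},r_{1}\times r_{2})$.
\item The pullback $\mu^{*}(R_{1}\times R_{2})$ corresponds to a $1$-derivation on $\mu^{*}(A_{1}\times A_{2})$ with base $J$ and endomorphism $\ell_{1}\times\ell_{2}$. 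Its connection part is determined on pullback sections by $\nabla^{\mu^{*}}_{v}(\mu^{*}\xi)=\mu^{*}(\nabla^{1}_{\mathrm{d}\mu_{1}v}\xi_{1},\nabla^{2}_{\mathrm{d}\mu_{2}v}\xi_{2})$, and it extends to all sections by the Leibniz rule \eqref{eq:leibniz}.
\end{itemize}
I would verify this last formula directly from \eqref{eq:pullback_tensor} and \eqref{eq:1-der-linear-tensor}, computing $\mathscr{L}_{(\mu^{*}\xi)^{\uparrow}}$ on the fiber product $TP\times_{TM}TE$. Note also that \eqref{eq:base_1-der_morita} is precisely the requirement that $J$ be $\mu_{i}$-related to $r_{i}$, so it is needed for $\mu^{*}(R_{1}\times R_{2})$ to be defined.

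The second step is a general lemma. Let $\Phi\colon E\to F$ be a vector bundle morphism over $\mathrm{id}_{P}$, and let $R_{E},R_{F}$ be linear tensors with $1$-derivations $(\nabla^{E},\ell_{E},r)$ and $(\nabla^{F},\ell_{F},r)$ sharing the base $r$. Then $\mathrm{d}\Phi\circ R_{E}=R_{F}\circ\mathrm{d}\Phi$ holds if and only if
\[
\Phi\circ\ell_{E}=\ell_{F}\circ\Phi \quad\text{and}\quad \Phi(\nabla^{E}_{v}\xi)=\nabla^{F}_{v}(\Phi\xi)\ \text{for all } \xi\in\Gamma(E).
\]
To prove it, I note that $\mathrm{d}\Phi$ maps $\xi^{\uparrow}$ to $(\Phi\xi)^{\uparrow}$. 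Evaluating the intertwining relation on vertical lifts therefore gives the $\ell$-condition. Taking the Lie derivative $\mathscr{L}_{\xi^{\uparrow}}$ of the intertwining relation, and using that $\xi^{\uparrow}$ is $\Phi$-related to $(\Phi\xi)^{\uparrow}$, gives the $\nabla$-condition. For the converse, I would use that $T_{e}E$ is spanned by vertical lifts together with the images $\mathrm{d}\zeta(u)$ of sections $\zeta$. The value of a linear tensor on $\mathrm{d}\zeta(u)$ is determined by $r$, $\ell$ and $\nabla$, via a standard formula $R(\mathrm{d}\zeta(u))=\mathrm{d}\zeta(r(u))-(\nabla_{u}\zeta)^{\uparrow}$ up to sign conventions, which I would derive from \eqref{eq:1-der-linear-tensor} by evaluating at $0$ and using linearity.

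The third step applies the lemma with $\Phi=\mathscr{a}$, $E=\mu^{*}(A_{1}\times A_{2})$ and $F=TP$. The $\ell$-condition becomes \eqref{eq:end_1-der_morita}. The $\nabla$-condition, evaluated on pullback sections $\mu^{*}(\xi,\zeta)$, becomes exactly \eqref{eq:connection_1-der_morita}, since $\mathscr{a}(\mu^{*}(\xi,\zeta))=\mathscr{a}_{1}(\xi)+\mathscr{a}_{2}(\zeta)$. Pullback sections generate $\Gamma(E)$ as a $C^{\infty}(P)$-module, so it remains to check that both sides of the $\nabla$-condition obey the same Leibniz rule under $\xi\mapsto f\xi$. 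This holds because $\mathscr{a}$ intertwines $\ell_{1}\times\ell_{2}$ with $J$, by the $\ell$-condition already established.

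The main obstacle is the converse direction of the lemma: reconstructing the full intertwining relation from the derivation data. This requires a careful description of $R$ on non-vertical vectors, with the correct signs in \eqref{eq:1-der-linear-tensor}. I would handle it by checking the relation on the spanning set consisting of vertical lifts $\xi^{\uparrow}$ and vectors $\mathrm{d}\zeta(u)$ tangent to the images of sections.
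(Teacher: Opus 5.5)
Your proposal is correct and follows essentially the same route as the paper. You compute the $1$-derivations of $\mu^{*}(R_{1}\times R_{2})$ (as in Lemma \ref{lemma:pullback_1-derivations}) and of $J^{\mathrm{tg}}$, apply the criterion \eqref{eq:related_tensors_derivation} for $\mathscr{a}$-related linear tensors, and read off \eqref{eq:end_1-der_morita} and \eqref{eq:connection_1-der_morita} on pullback sections; the only differences are that you prove that criterion yourself instead of citing \cite[Theorem 2.1]{bursztyn-drummond-netto2022dirac}, and you spell out the Leibniz-rule argument (which uses the $\ell$-condition) justifying the reduction of the $\nabla$-condition to pullback sections, which the paper leaves implicit.
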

The proof of this proposition relies on the following lemma, which characterizes the pullback of $1$-derivations.
\begin{lemma}
  \label{lemma:pullback_1-derivations}
    Let $\pi:E \to M$ be a vector bundle equipped with a linear $(1,1)$-tensor field $R\in \Omega^{1}(E, TE)$ covering $r \in \Omega^{1}(M,TM)$, and let $\mathscr{D} = (\nabla, \ell,r)$ be its associated $1$-derivation.
  Let $\mu: P \to M$ be a smooth map and suppose $J \in \Omega^{1}(P,TP)$ is a $(1,1)$-tensor field $\mu$-related to $r$. Then the $1$-derivation associated with the pullback $\mu^{*}R$ is given by the triple $\mu^{*}\mathscr{D} = \left( \mu^{*}\nabla , \mu^{*}\ell , J \right)$, whose components are uniquely determined by their action on the pullback of sections:
  \begin{numcases}{}
    (\mu^{*}\nabla)_{v}(\mu^{*}\xi) = \mu^{*}\left( \nabla_{\mathrm{d}\mu(v)}\xi \right) , \label{eq:pullback_connection} \\
    (\mu^{*}\ell)(\mu^{*}\xi) = \mu^{*}(\ell(\xi)), \label{eq:pullback_end}
  \end{numcases} 
  for every $v \in TP$ and $\xi \in \Gamma(E)$.
\end{lemma}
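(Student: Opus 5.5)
The plan is to check the three defining relations \eqref{eq:1-der-linear-tensor} for the triple $(\mu^{*}\nabla,\mu^{*}\ell,J)$ against the tensor field $\mu^{*}R$ defined by \eqref{eq:pullback_tensor}. Since linear $(1,1)$-tensor fields correspond bijectively to $1$-derivations, this identifies the $1$-derivation of $\mu^{*}R$.

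The first step is to confirm that $\mu^{*}R$ is linear and covers $J$. The projection $T(\mu^{*}E)\cong TP\times_{TM}TE\to TP$ is the first factor, so $\mu^{*}R$ covers $J$. Linearity over $TP$ follows from the linearity of $R$ over $r$. The base relation $\langle \mu^{*}R,\mathrm{pr}^{*}\alpha\rangle=\mathrm{pr}^{*}\langle J,\alpha\rangle$ is then immediate.

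The second step treats the endomorphism part. The sections $\mu^{*}\xi$ span $\Gamma(\mu^{*}E)$ over $C^{\infty}(P)$. Under the identification, the vertical lift $(\mu^{*}\xi)^{\uparrow}$ at $(p,e)$ is $(0_{p},\xi^{\uparrow}(e))$. Applying \eqref{eq:pullback_tensor} gives $(0,R(\xi^{\uparrow}))=(0,\ell(\xi)^{\uparrow})=(\mu^{*}\ell(\xi))^{\uparrow}$. Hence the endomorphism of $\mu^{*}R$ satisfies \eqref{eq:pullback_end}. Because it is $C^{\infty}(P)$-linear, it is determined by these values.

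The third step, the connection part, is the main obstacle. I would compute $(\mathscr{L}_{(\mu^{*}\xi)^{\uparrow}}\mu^{*}R)(w)$ for $w=(v,X)$. The flow of $(\mu^{*}\xi)^{\uparrow}$ is $\Phi_{\tau}(p,e)=(p,e+\tau\xi)$, which is the product of the identity on $P$ with the flow $\phi_{\tau}$ of $\xi^{\uparrow}$ on $E$, restricted to the fiber product. Its differential is therefore $\mathrm{id}\times \mathrm{d}\phi_{\tau}$. Differentiating $\mathrm{d}\Phi_{-\tau}\circ\mu^{*}R\circ \mathrm{d}\Phi_{\tau}$ at $\tau=0$ gives $(0,(\mathscr{L}_{\xi^{\uparrow}}R)(X))$, because the first component $J(v)$ does not depend on $\tau$. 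This equals $(0,\nabla_{\mathrm{d}\pi(X)}\xi)^{\uparrow}$, a vertical lift, and $\mathrm{d}\pi(X)=\mathrm{d}\mu(v)$. Identifying it as the vertical lift of $\mu^{*}(\nabla_{\mathrm{d}\mu(v)}\xi)$ at the point gives \eqref{eq:pullback_connection}.

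For uniqueness, the Leibniz rule \eqref{eq:leibniz}, with $\mu^{*}\ell$ and $J$ as given, extends the operator uniquely from pulled-back sections to all of $\Gamma(\mu^{*}E)$. One must check that this extension is well defined; it is, because the operator is the $1$-derivation of an actual tensor field. The only delicate point is that the Lie derivative is taken on the submanifold $\mu^{*}E\subseteq P\times E$. I would handle this by noting that the flow $\mathrm{id}\times\phi_{\tau}$ of the ambient field $0\times\xi^{\uparrow}$ preserves the fiber product and restricts to $\Phi_{\tau}$. It also intertwines $J\times R$, which restricts to $\mu^{*}R$.
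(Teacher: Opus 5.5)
Your proposal is correct and follows the same route as the paper's proof. Both arguments reduce to pulled-back sections using local generation and the Leibniz rule. Both read off $\mu^{*}\ell$ from the vertical lift $(\mu^{*}\xi)^{\uparrow}=(0,\xi^{\uparrow})$, and both obtain $\mu^{*}\nabla$ from $\bigl(\mathscr{L}_{(\mu^{*}\xi)^{\uparrow}}\mu^{*}R\bigr)(v,X)=\bigl(0,(\mathscr{L}_{\xi^{\uparrow}}R)(X)\bigr)$ together with $\mathrm{d}\pi(X)=\mathrm{d}\mu(v)$. Your flow argument, which restricts $\mathrm{id}\times\phi_{\tau}$ and $J\times R$ to the fiber product, justifies that Lie-derivative identity, which the paper states without comment.
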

\begin{proof}
    It suffices to determine $\mu^{*}\mathscr{D}$ on sections of the form $\mu^{*}\xi$ for $\xi \in \Gamma(E)$ because the $C^{\infty}(P)$-module of sections $\Gamma(\mu^{*}E)$ is locally generated by the pullbacks of a frame of $E$. Indeed, since the component $\mu^{*}\ell$ is an endomorphism of $\mu^{*}E$, it is fully characterized by its action on these generators. Similarly, the connection component $\mu^{*}\nabla$ is determined by its values on pullback sections because it must satisfy the Leibniz rule (Equation \eqref{eq:leibniz}):
\[
    \left(\mu^{*}\nabla\right)_{v}(f\zeta) = f \left(\mu^{*}\nabla\right)_{v}(\zeta) + (\mathscr{L}_{v}f)(\mu^{*}\ell)(\zeta) - (\mathscr{L}_{J(v)}f)\zeta,
\]
for any $v \in TP$, $f \in C^{\infty}(P)$, and $\zeta \in \Gamma(\mu^{*}E)$.

Now, applying the correspondence \eqref{eq:1-der-linear-tensor} between linear $(1,1)$-tensors and $1$-derivations, we directly identify the base component as $J \in \Omega^{1}(P, TP)$. To check the other components, note that the vertical lift of a pullback section $\mu^{*}\xi \in \Gamma(\mu^{*}E)$ at $(p,e) \in \mu^{*}E$ is given by:
\[
    \left(\mu^{*}\xi\right)^{\uparrow}_{(p,e)} = \left.\frac{\mathrm{d}}{\mathrm{d}\tau}\right|_{\tau =0} \left( p, e + \tau\xi_{\pi(e)} \right) = \left( 0_{p}, \xi^{\uparrow}_{e} \right).
\]
The pullback tensor $\mu^*R$ acts on a tangent vector $(v, w) \in T(\mu^*E)$ by $\mu^*R(v,w) = (J(v), R(w))$ (Equation \eqref{eq:pullback_tensor}). Therefore, applying this to the vertical lift and using the correspondence \eqref{eq:1-der-linear-tensor}, we obtain:
\[
   \mu^*R \left( (\mu^*\xi)^\uparrow \right) = \mu^*R(0, \xi^\uparrow) = \left( 0, R(\xi^{\uparrow}) \right) = \left( 0, \ell (\xi)^{\uparrow} \right) = \left( \mu^{*}(\ell (\xi)) \right)^{\uparrow},
\]
from which it follows that $(\mu^{*}\ell)(\mu^{*}\xi) = \mu^{*}(\ell(\xi))$. Similarly, for the connection component, we compute the Lie derivative at a vector $(v,w) \in T(\mu^{*}E) \cong TP\times_{TM}TE$ and apply the correspondence \eqref{eq:1-der-linear-tensor}:
\[
  \left( \mathscr{L}_{(\mu^{*}\xi)^{\uparrow}}(\mu^{*}R) \right)(v,w)
  = \left( 0, (\mathscr{L}_{\xi^{\uparrow}}R)(w)\right)
  = \left( 0, \left(\nabla_{\mathrm{d}\pi(w)} \xi\right)^{\uparrow}\right).
\]
Since $(v,w)$ lies in the fiber product, we have $\mathrm{d}\pi(w) = \mathrm{d}\mu(v)$. Thus, identifying the vertical lift in the pullback bundle, the expression becomes:
    \[
        \left( 0, \left(\nabla_{\mathrm{d}\mu(v)} \xi\right)^{\uparrow}\right) = \left(\mu^{*}\left( \nabla_{\mathrm{d}\mu(v)}\xi \right)\right)^{\uparrow}.
    \]
    On the other hand, since the projection of $(v,w)$ to $TP$ is $\mathrm{d}(\mu^*\pi)(v,w) = v$, the definition of the connection via the correspondence \eqref{eq:1-der-linear-tensor} yields $\left( (\mu^{*}\nabla)_{v}(\mu^{*}\xi) \right)^{\uparrow} = \mathscr{L}_{(\mu^{*}\xi)^{\uparrow}}(\mu^{*}R)(v,w)$. Thus, we conclude:
\[
    (\mu^{*}\nabla)_{v}(\mu^{*}\xi) = \mu^{*}\left( \nabla_{\mathrm{d}\mu(v)}\xi \right).
\]
\end{proof}
We now proceed to the proof of Proposition \ref{prop:morita-equiv-1-derivation}. To this end, we recall the characterization of morphisms of related linear tensor fields established in \cite[Theorem 2.1]{bursztyn-drummond-netto2022dirac}. Let $R_{i}\in \Omega^{1}(E_{i},TE_{i})$ be linear $(1,1)$-tensor fields on vector bundles $E_{i}\to M$ for $i=1,2$, and let $\Phi:E_{1}\to E_{2}$ be a vector bundle map covering the identity. Then
\begin{equation}
    \label{eq:related_tensors_derivation}
    \mathrm{d}\Phi \circ R_{1} = R_{2}\circ \mathrm{d}\Phi \qquad \iff \qquad  \Phi\circ \nabla_{v}^{1}= \nabla_{v}^{2}\circ \Phi , \quad \Phi \circ \ell_{1} = \ell_{2}\circ\Phi \quad \text{and}\quad r_{1}=r_{2},
\end{equation}
for every $v\in \mathfrak{X}(M)$.
\begin{proof}[Proof of Proposition \ref{prop:morita-equiv-1-derivation}]
  The definition of Morita equivalence for linear $(1,1)$-tensor fields $R_{1}$ and $R_{2}$ requires, first, that the base tensors $r_{1}$ and $r_{2}$ are $\mu_{1}$-related and $\mu_{2}$-related to $J$, respectively. Equation \eqref{eq:base_1-der_morita} is simply a restatement of this condition.

  The second requirement is that the bundle map $\mathscr{a} = \mathscr{a}_1 + \mathscr{a}_2 : \mu^{*}(A_{1}\times A_{2}) \to TP$ intertwines the tensor fields:
  \[
      \mathrm{d}\mathscr{a} \circ \mu^*(R_{1}\times R_{2}) = J^{\mathrm{tg}}\circ \mathrm{d}\mathscr{a}.
  \]
  We show that this condition is equivalent to Equations \eqref{eq:end_1-der_morita} and \eqref{eq:connection_1-der_morita} by using the identification \eqref{eq:related_tensors_derivation} between related $(1,1)$-tensors and related $1$-derivations. Let $\left(\mu^{*}\nabla,\mu^{*}\ell ,J\right)$ be the $1$-derivation associated with $\mu^{*}(R_{1}\times R_{2})$ and recall that the $1$-derivation corresponding to $J^{\mathrm{tg}}$ is $(\nabla^{J},J,J)$. Due to \eqref{eq:related_tensors_derivation}, the condition that $\mu^{*}(R_{1}\times R_{2})$ is $\mathscr{a}$-related to $J^{\mathrm{tg}}$ is equivalent to:
  \[
    \mathscr{a}\circ\mu^{*}\ell = J\circ \mathscr{a}  \qquad \text{and}\qquad  \mathscr{a}\circ \mu^{*}\nabla  = \nabla^{J}\circ \mathscr{a}.
  \]
  Viewing the action as a map of sections $\mathscr{a}:\Gamma(A_{1}\times A_{2}) \to \mathfrak{X}(P)$, the first identity $\mathscr{a}\circ\mu^{*}\ell = J\circ \mathscr{a}$ yields precisely Equation \eqref{eq:end_1-der_morita}.

  To verify that $\mathscr{a}\circ \mu^{*}\nabla  = \nabla^{J}\circ \mathscr{a}$ corresponds to Equation \eqref{eq:connection_1-der_morita}, it suffices to check it on pullback sections of the form $\mu^{*}(\xi_{1},\xi_{2})$, where $\xi_{1}\in \Gamma(A_{1})$ and $\xi_{2}\in \Gamma(A_{2})$. Using Lemma \ref{lemma:pullback_1-derivations}, we compute:
  \[
    \mathscr{a}\left( (\mu^{*}\nabla)_{v}\left( \mu^{*}(\xi_{1},\xi_{2}) \right) \right) = \mathscr{a}\left( \mu^{*}\left( \nabla_{\mathrm{d}\mu(v)}(\xi_{1},\xi_{2}) \right) \right) = \mathscr{a}\left( \mu^{*}\left( \nabla^{1}_{\mathrm{d}\mu_{1}(v)}(\xi_{1}), \nabla^{2}_{\mathrm{d}\mu_{2}(v)} (\xi_{2}) \right) \right),
  \]
 where in the last equality we used the $C^{\infty}(M_{1}\times M_{2})$-linearity of $\nabla$ in the argument $\mathrm{d}\mu(v)$. Therefore, viewing $\mathscr{a}$ as a map of sections $\Gamma(A_{1}\times A_{2})\to \mathfrak{X}(P)$, the condition $\nabla^{J}\circ\mathscr{a} = \mathscr{a}\circ \mu^{*}\nabla$ becomes:
  \[
    \nabla^{J}_{v}\left( \mathscr{a}(\xi_{1},\xi_{2}) \right) =\mathscr{a}\left( \nabla_{\mathrm{d}\mu(v)}(\xi_{1},\xi_{2}) \right)  = \mathscr{a}_{1}\left( \nabla^{1}_{\mathrm{d}\mu_{1}(v)}\xi_{1} \right)  + \mathscr{a}_{2}\left( \nabla^{2}_{\mathrm{d}\mu_{2}(v)}\xi_{2} \right),
  \]
  which concludes the proof. 
\end{proof}
 
\subsection{Lie correspondence for Morita equivalences} 

In this subsection we show how the Lie functor establishes a bijection between the Morita classes of multiplicative tensor fields on source-simply connected Lie groupoids and Morita classes of IM tensor fields on Lie algebroids. Moreover, it follows immediately that this correspondence respects the Nijenhuis condition of the Morita equivalences.
\begin{theorem}
  \label{thm:morita-nijenhuis-groupoid-to-algebroid}
  Let $\mathscr{G}$ and $\mathscr{H}$ be Lie groupoids with associated Lie algebroids $A_{\mathscr{G}}$ and $A_{\mathscr{H}}$, and let $P$ be a Morita $(\mathscr{G},\mathscr{H})$-bibundle. Let $N_{1}\in \Omega^{1}(\mathscr{G},T\mathscr{G})$ and $N_{2}\in \Omega^{1}\left( \mathscr{H},T\mathscr{H} \right)$ be multiplicative $(1,1)$-tensor fields, and suppose there exists $J\in \Omega^{1}(P,TP)$ such that $N_{1}$ and $N_{2}$ are Morita equivalent via $(P,J)$. Then $N_1$ and $N_2$ differentiate to IM $(1,1)$-tensor fields which are Morita equivalent via the associated infinitesimal Morita $(A_{\mathscr{G}}, A_{\mathscr{H}})$-bibundle $(P,J)$. Moreover, if the global Morita equivalence is Nijenhuis, then the infinitesimal Morita equivalence is also Nijenhuis.
\end{theorem}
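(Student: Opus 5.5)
The plan is to apply the Lie functor to the global intertwining condition, written in the separated form of Remark \ref{rmk:morita-nijenhuis-separate}. By Lemma \ref{lemma:correspondence-morita}(a), $P$ with the differentiated actions $\mathscr{a}_{1},\mathscr{a}_{2}$ is already an infinitesimal Morita $(A_{\mathscr{G}},A_{\mathscr{H}})$-bibundle. So it remains to check the conditions of Definition \ref{def:morita-equivalence-linear-tensor}. Rather than work with $\mathrm{d}\mathscr{a}$ directly, I will verify the equivalent $1$-derivation conditions \eqref{eq:base_1-der_morita}--\eqref{eq:connection_1-der_morita} of Definition \ref{def:morita-equiv-1-derivations} and conclude by Proposition \ref{prop:morita-equiv-1-derivation}. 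Let $R_{1},R_{2}$ be the IM tensor fields obtained from $N_{1},N_{2}$, with IM $1$-derivations $(\nabla^{i},\ell_{i},r_{i})$ characterized by \eqref{eq:IM-derivation-groupoid}.

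\textbf{Step 1: the base condition \eqref{eq:base_1-der_morita}.} This is exactly \eqref{eq:tensor fields-related-moment-map}, which was established in the proof of Proposition \ref{prop:morita-equiv-characterization}.

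\textbf{Step 2: the endomorphism condition \eqref{eq:end_1-der_morita}.} Fix $p\in P$ with $x=\mu_{1}(p)$. By \eqref{eq:integration-action}, $\mathscr{a}_{1}(\xi)_{p}=\mathrm{d}\mathscr{A}_{1}(\overrightarrow{\xi}_{x},0_{p})$. Applying \eqref{eq:morita-tensor-action-3} and $N_{1}(\overrightarrow{\xi})=\overrightarrow{\ell_{1}(\xi)}$ gives
\[
J(\mathscr{a}_{1}(\xi)_{p})=\mathrm{d}\mathscr{A}_{1}\bigl(\overrightarrow{\ell_{1}(\xi)}_{x},0_{p}\bigr)=\mathscr{a}_{1}(\ell_{1}(\xi))_{p}.
\]
The same argument with the right action (and its sign convention for $A^{\mathrm{op}}$) handles $\mathscr{a}_{2}$.

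\textbf{Step 3: the connection condition \eqref{eq:connection_1-der_morita}.} This is where I expect the main obstacle. The left side is $\nabla^{J}_{v}(\mathscr{a}_{1}(\xi))=(\mathscr{L}_{\mathscr{a}_{1}(\xi)}J)(v)$, and I plan to compute it through flows. The flow of $\mathscr{a}_{1}(\xi)$ is $p\mapsto \mathscr{A}_{1}(\phi_{\tau}(\mu_{1}(p)),p)$, where $\phi_{\tau}$ is the flow of right-invariant vector fields, i.e.\ left translation by bisections $\exp(\tau\xi)$. Since $\mathscr{A}_{1}$ intertwines $N_{1}\times J$ with $J$, the pullback of $J$ by this flow equals the action of the pulled-back tensor $\exp(\tau\xi)^{*}N_{1}$ in the $G$-slot. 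Differentiating at $\tau=0$ should give
\[
(\mathscr{L}_{\mathscr{a}_{1}(\xi)}J)(v)=\mathrm{d}\mathscr{A}_{1}\bigl((\mathscr{L}_{\overrightarrow{\xi}}N_{1})(\tilde v),0\bigr),
\]
where $\tilde v$ is a lift of $\mathrm{d}\mu_{1}(v)$ along the unit section. By \eqref{eq:IM-derivation-groupoid} this equals $\mathscr{a}_{1}(\nabla^{1}_{\mathrm{d}\mu_{1}(v)}\xi)$.

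If the flow argument becomes messy, I will instead work on the graph. Using Remark \ref{rmk:morita-equivalence-linear-tensor}, I differentiate the condition that $N_{1}\times J\times J$ preserves $T\operatorname{Graph}(\mathscr{A}_{1})$, viewed as a multiplicative condition on a subgroupoid of $\mathscr{G}\times\mathrm{Pair}(P)$. Then Lemma \ref{lemma:integration-tangent-lift} (where $J\times J$ differentiates to $J^{\mathrm{tg}}$) together with the Lie functor on subgroupoids gives \eqref{eq:morita-equiv-linear-graph} directly, with no derivation computation. I would probably present this graph argument as the main proof: the action groupoid $\mathscr{G}\ltimes P$ embeds in $\mathscr{G}\times\mathrm{Pair}(P)$ via $(g,p)\mapsto(g,g\cdot p,p)$. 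The intertwining condition says that the multiplicative tensor $N_{1}\times J\times J$ is tangent to this subgroupoid, and differentiation preserves this, giving the graph of $\mathscr{a}_{1}$ inside $A_{\mathscr{G}}\times TP$. Combining the two actions then gives the total action.

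\textbf{Step 4: the Nijenhuis statement.} If the equivalence is Nijenhuis, then $J$, $N_{1}$ and $N_{2}$ have vanishing torsion. By Lemma \ref{lemma:tensor fields-correspondence-nijenhuis}, $R_{1}$ and $R_{2}$ are Nijenhuis, and $J$ is unchanged. So the infinitesimal equivalence is Nijenhuis.
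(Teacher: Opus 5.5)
Your proposal is correct, and it contains two arguments. The graph argument you say you would present is essentially the paper's proof. The only difference is that the paper applies it once to the combined action $(g,p,h)\mapsto g\cdot p\cdot h$ of $\mathscr{G}\times\mathscr{H}^{\mathrm{op}}$: it takes $\mathscr{K}=\mathrm{Graph}(\mathscr{A})\subseteq\mathscr{G}\times\mathscr{H}^{\mathrm{op}}\times\mathrm{Pair}(P)$ with the multiplicative tensor $(N_{1}\times N_{2})\times(J\times J)$. This lands directly on $T\mathrm{Graph}(\mathscr{a}_{1}+\mathscr{a}_{2})$ with no ``combining'' step. Your combining step is harmless anyway, since the $1$-derivation of $\mu^{*}(R_{1}\times R_{2})$ is the direct sum of those of $\mu_{1}^{*}R_{1}$ and $\mu_{2}^{*}R_{2}$.

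Your primary plan via Definition~\ref{def:morita-equiv-1-derivations} is a genuinely different route, and it also works. Steps 1--2 are right as written, and the flow computation you hedge on in Step 3 is correct. If $\phi_{\tau}$ is the flow of $\overrightarrow{\xi}$, then $\Phi_{\tau}(p)=\phi_{\tau}(\mathbf{1}_{\mu_{1}(p)})\cdot p$ is the flow of $\mathscr{a}_{1}(\xi)$. Since left translation by a bisection preserves $\mathbf{s}$, $\Phi_{\tau}\circ\mathscr{A}_{1}=\mathscr{A}_{1}\circ(\phi_{\tau}\times\mathrm{id})$, so $(\Phi_{\tau}^{*}J)\circ\mathrm{d}\mathscr{A}_{1}=\mathrm{d}\mathscr{A}_{1}\circ(\phi_{\tau}^{*}N_{1}\times J)$. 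Differentiating at $\tau=0$ and evaluating at $(\mathrm{d}\mathbf{u}_{\mathscr{G}}(\mathrm{d}\mu_{1}(v)),v)$ gives $\mathscr{a}_{1}(\nabla^{1}_{\mathrm{d}\mu_{1}(v)}\xi)$ by \eqref{eq:IM-derivation-groupoid}.

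The two routes buy different things. Yours is more explicit and elementary. It never needs to identify the Lie algebroid of the graph groupoid, nor the fact that the Lie functor turns tangency to a subgroupoid into tangency to its subalgebroid. It also shows that the condition splits action by action. The price is that it goes through Proposition~\ref{prop:morita-equiv-1-derivation} and the characterization \eqref{eq:related_tensors_derivation} of related linear tensors. The paper's graph argument is shorter, and it is exactly the argument reversed in the proof of Theorem~\ref{thm:morita-nijenhuis-algebroid-to-groupoid}.

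One small correction in Step 4: the Lie correspondence lemma you cite assumes source-simply connected groupoids, and this theorem does not. You only need the differentiation direction (multiplicative Nijenhuis implies IM Nijenhuis), which holds in general and is what the paper invokes.
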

\begin{proof}
    Let $\mathscr{A}_{1}:G_{1}\times_{G_{0}}P\to P$ and $\mathscr{A}_{2}:P\times_{H_{0}}H_{1}\to P$ be the right and left actions in the Morita bibundle, respectively. These actions combine into a single action $\mathscr{A}:(G_{1}\times H_{1})\times_{(\mathbf{s},\mu)}P\to P$ of the Lie groupoid $\mathscr{G}\times \mathscr{H}^{\mathrm{op}}$ on the submersion $\mu:P\to G_{0}\times H_{0}$ defined by $\mu\coloneqq (\mu_{1},\mu_{2})$.

Consider the graph of $\mathscr{A}$ as a Lie groupoid $\mathscr{K}=\left( \mathrm{Graph}(\mathscr{A})\rightrightarrows P \right)$, where $\mathrm{Graph}(\mathscr{A}) \subseteq P\times_{(\mu,\mathbf{t})}(G_{1}\times H_{1})\times_{(\mu,\mathbf{s})}P$. The source and target maps are the respective projections onto the second and first $P$-factors; thus, we may view $\mathscr{K}$ as a subgroupoid of $\mathscr{G}\times \mathscr{H}^{\mathrm{op}}\times \mathrm{Pair}(P)$. The condition that the tensor fields $J$ and $(N_{1}\times N_{2})\times J$ intertwine the action (Equation \eqref{eq:morita-tensor-action-2}) implies that $N\coloneqq (N_{1}\times N_{2})\times (J\times J)$ is a morphism from the tangent Lie groupoid $T\mathscr{K}$ to itself. Thus, $N\in \Omega^{1}(\mathscr{K},T\mathscr{K})$ is a multiplicative $(1,1)$-tensor field on $\mathscr{K}$ covering $J$.

The Lie algebroid of $\mathscr{K}=\left( \mathrm{Graph}(\mathscr{A})\rightrightarrows P \right)$ is the graph of the infinitesimal action $\mathscr{a}:\Gamma\left(A_{\mathscr{G}}\times A_{\mathscr{H}}^{\mathrm{op}} \right)\to \mathfrak{X}(P)$; that is,  $A_{\mathscr{K}} = \mathrm{Graph}(\mathscr{a})$. The Lie algebroid $A_{\mathscr{K}}$ is a Lie subalgebroid of $(A_{\mathscr{G}}\times A_{\mathscr{H}}^{\mathrm{op}})\times_{(\rho,\mathrm{d}\mu)} TP\to P$, which integrates to $\mathscr{G}\times \mathscr{H}^{\mathrm{op}}\times \mathrm{Pair}(P)$. Applying the Lie functor to the morphism $N=(N_{1}\times N_{2})\times (J\times J)$ and using Lemma \ref{lemma:integration-tangent-lift} to identify $J^{\mathrm{tg}}$ as the infinitesimal counterpart of $J\times J$, we obtain the Lie algebroid morphism $R_{N}=R_{1}\times R_{2}\times J^{\mathrm{tg}}: TA_{\mathscr{K}}\to TA_{\mathscr{K}}$, where $R_{1}$ and $R_{2}$ are the infinitesimal counterparts of $K_{1}$ and $K_{2}$, respectively. The condition that $R_{N}$ preserves $\mathrm{Graph}(\mathscr{a})$ is equivalent to
\[
  \mathrm{d}\mathscr{a} \circ \mu^{*}\left( R_{1}\times R_{2}\right) = J^{\mathrm{tg}}\circ\mathrm{d}\mathscr{a}.
\]
This equation is one of the requirements of Definition \ref{def:morita-equivalence-linear-tensor} (cf. Remark \ref{rmk:morita-equivalence-linear-tensor}). It remains to observe that the action $\mathscr{a}$ decomposes as $\mathscr{a}=\mathscr{a}_{1}+\mathscr{a}_{2}$, where $\mathscr{a}_{1}$ and $\mathscr{a}_{2}$ are the differentiations of $\mathscr{A}_{1}$ and $\mathscr{A}_{2}$, respectively. By Lemma \ref{lemma:correspondence-morita}, $\mathscr{a}_{1}$ and $\mathscr{a}_{2}$ constitute an infinitesimal Morita $\left( A_{\mathscr{G}},A_{\mathscr{H}} \right)$-bibundle $P$. This completes the proof for multiplicative $(1,1)$-tensor fields.

Regarding the Nijenhuis property, the vanishing of the torsion for the multiplicative tensor fields implies that the IM tensor fields are also Nijenhuis (see \cite[Section 6.2]{bursztyn-drummond2019multiplicative}).
\end{proof}
We now address the converse of Theorem \ref{thm:morita-nijenhuis-groupoid-to-algebroid}.
\begin{theorem}
  \label{thm:morita-nijenhuis-algebroid-to-groupoid}
  Let $\mathscr{G}$ and $\mathscr{H}$ be source-simply connected Lie groupoids with Lie algebroids $A_{\mathscr{G}}$ and $A_{\mathscr{H}}$ respectively, and let $P$ be an infinitesimal Morita $(A_{\mathscr{G}},A_{\mathscr{H}})$-bibundle. Let $R_{1}\in \Omega^{1}(A_{\mathscr{G}},TA_{\mathscr{G}})$ and $R_{2}\in \Omega^{1}(A_{\mathscr{H}},TA_{\mathscr{H}})$ be IM tensor fields that are Morita equivalent via $(P,J)$ for some $J\in \Omega^{1}(P,TP)$. Then the multiplicative $(1,1)$-tensor fields $N_{1}$ and $N_{2}$ that integrate $R_{1}$ and $R_{2}$ are Morita equivalent via the associated Morita $(\mathscr{G},\mathscr{H})$-bibundle $(P,J)$. Furthermore, if the infinitesimal Morita equivalence is Nijenhuis, then the global Morita equivalence is also Nijenhuis.
\end{theorem}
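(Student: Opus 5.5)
The plan is to reverse the argument of Theorem \ref{thm:morita-nijenhuis-groupoid-to-algebroid}, integrating along the action groupoid (graph groupoid) of the combined action. By Lemma \ref{lemma:correspondence-morita}(b), the complete infinitesimal actions $\mathscr{a}_{1}$ and $\mathscr{a}_{2}$ integrate to commuting principal actions $\mathscr{A}_{1}$, $\mathscr{A}_{2}$ of $\mathscr{G}$ and $\mathscr{H}$, which make $P$ a Morita $(\mathscr{G},\mathscr{H})$-bibundle. Let $\mathscr{A}$ be the combined action of $\mathscr{G}\times\mathscr{H}^{\mathrm{op}}$ on $\mu=(\mu_{1},\mu_{2})$. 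Form the action groupoid $\mathscr{K}=(\mathrm{Graph}(\mathscr{A})\rightrightarrows P)$. Its Lie algebroid is $A_{\mathscr{K}}=\mathrm{Graph}(\mathscr{a})\cong \mu^{*}(A_{\mathscr{G}}\times A_{\mathscr{H}}^{\mathrm{op}})$. Since $\mathscr{K}\cong(\mathscr{G}\times\mathscr{H}^{\mathrm{op}})\ltimes P$ and $\mathscr{G}\times\mathscr{H}^{\mathrm{op}}$ is source-simply connected, the $\mathbf{s}$-fibers of $\mathscr{K}$ are products of $\mathbf{s}$-fibers of $\mathscr{G}$ and $\mathscr{H}$. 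Hence $\mathscr{K}$ is source-simply connected.

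Next I would integrate the tensor data. By Lemma \ref{lemma:tensor fields-correspondence-nijenhuis}, $R_{1}$ and $R_{2}$ integrate to multiplicative $N_{1}$ and $N_{2}$, and by Lemma \ref{lemma:integration-tangent-lift}, $J^{\mathrm{tg}}$ integrates to $J\times J$ on $\mathrm{Pair}(P)$. Together these give a multiplicative tensor $\widetilde N=N_{1}\times N_{2}\times J\times J$ on the ambient groupoid $\mathscr{G}\times\mathscr{H}^{\mathrm{op}}\times\mathrm{Pair}(P)$, of which $\mathscr{K}$ is a subgroupoid. By Remark \ref{rmk:morita-equivalence-linear-tensor}, the IM tensor $R_{1}\times R_{2}\times J^{\mathrm{tg}}$ preserves $T\mathrm{Graph}(\mathscr{a})=TA_{\mathscr{K}}$. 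So it restricts to an IM tensor $R_{\mathscr{K}}$ on $A_{\mathscr{K}}$, i.e.\ a Lie algebroid morphism $TA_{\mathscr{K}}\to TA_{\mathscr{K}}$. Since $\mathscr{K}$ is source-simply connected, Lie's second theorem applied to $T\mathscr{K}$ integrates $R_{\mathscr{K}}$ to a multiplicative $K$ on $\mathscr{K}$.

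The main obstacle is to show that $K=\widetilde N|_{T\mathscr{K}}$, i.e.\ that $\widetilde N$ preserves $T\,\mathrm{Graph}(\mathscr{A})$. I would argue by uniqueness in Lie's second theorem. The inclusion $\iota:T\mathscr{K}\hookrightarrow T(\mathscr{G}\times\mathscr{H}^{\mathrm{op}}\times\mathrm{Pair}(P))$ gives two groupoid morphisms $\widetilde N\circ\iota$ and $\iota\circ K$ from the source-simply connected groupoid $T\mathscr{K}$ (its $\mathbf{s}$-fibers are tangent bundles of those of $\mathscr{K}$). Both differentiate to $(R_{1}\times R_{2}\times J^{\mathrm{tg}})\circ\mathrm{Lie}(\iota)$, so they coincide. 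Hence $\widetilde N(T\mathrm{Graph}(\mathscr{A}))\subseteq T\mathrm{Graph}(\mathscr{A})$. Unwinding the graph, this says $(N_{1}\times J\times N_{2})$ preserves $TG_{1}\times_{TG_{0}}TP\times_{TH_{0}}TH_{1}$ and $J\circ\mathrm{d}\mathscr{A}=\mathrm{d}\mathscr{A}\circ(N_{1}\times J\times N_{2})$, which are exactly conditions \eqref{eq:morita-tensor-restriction-2} and \eqref{eq:morita-tensor-action-2}, after the usual sign/inverse bookkeeping for the $\mathscr{H}^{\mathrm{op}}$ factor. The $\mu_{i}$-relatedness of $J$ with $r_{i}$ is part of Definition \ref{def:morita-equivalence-linear-tensor}, and it is consistent with the base maps of $N_{i}$. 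Proposition \ref{prop:morita-equiv-characterization}(b) then gives the Morita equivalence.

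For the Nijenhuis statement: if $R_{1}$, $R_{2}$ are Nijenhuis, Lemma \ref{lemma:tensor fields-correspondence-nijenhuis} makes $N_{1}$ and $N_{2}$ Nijenhuis. The intertwining tensor on the bibundle is $J$ itself, which is Nijenhuis by hypothesis. By the remark after Proposition \ref{prop:morita-equiv-characterization}, the span tensor $N_{1}\times J\times N_{2}$ on $\mathscr{P}$ is then also Nijenhuis, being the restriction of a product of Nijenhuis tensors to an invariant submanifold.
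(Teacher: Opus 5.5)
Your proposal is correct and follows the paper's own proof. Like the paper, you restrict $\mu^{*}(R_{1}\times R_{2})\times J^{\mathrm{tg}}$ to $T\mathrm{Graph}(\mathscr{a})$, integrate it by Lie's second theorem on the graph groupoid $\mathscr{K}$ of the integrated action, and use uniqueness of integration (with $J^{\mathrm{tg}}$ integrating to $J\times J$) to identify the result with $N_{1}\times N_{2}\times J\times J$, which unwinds to \eqref{eq:morita-tensor-restriction-2}--\eqref{eq:morita-tensor-action-2}. Your comparison of $\widetilde N\circ\iota$ with $\iota\circ K$ as morphisms into the ambient groupoid, together with your checks that $\mathscr{K}$ and $T\mathscr{K}$ are source-simply connected, usefully makes explicit steps the paper only asserts; in particular, the paper speaks of ``the restriction'' of $N_{1}\times N_{2}\times J\times J$ before showing that this restriction exists.
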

\begin{proof}
    Let $\mathscr{a}_{1} \colon \mu_{1}^{*}A_{\mathscr{G}} \to TP$ and $\mathscr{a}_{2} \colon \mu_{2}^{*}A_{\mathscr{H}} \to TP$ denote the left and right infinitesimal actions of the respective Lie algebroids. The total infinitesimal action is given by $\mathscr{a}(\xi, \zeta) \coloneqq \mathscr{a}_{1}(\xi) + \mathscr{a}_{2}(\zeta)$. Letting $\mu\coloneqq (\mu_{1},\mu_{2})$, Remark \ref{rmk:morita-equivalence-linear-tensor} implies that the tensor fields are Morita equivalent if and only if the image of $T\mathrm{Graph}(\mathscr{a})$ under the map $R \coloneqq \mu^{*}(R_{1} \times R_{2}) \times J^{\mathrm{tg}}$ is contained in $T\mathrm{Graph}(\mathscr{a})$. Because each factor of $R$ is a Lie algebroid morphism, $R$ naturally defines a Lie algebroid endomorphism of $T\mathrm{Graph}(\mathscr{a})$.

    Let $\mathscr{A} \colon G_{1} \times_{G_{0}} P \times_{H_{0}} H_{1} \to P$ denote the combined global action, defined by $\mathscr{A}(g, p, h) = g \cdot p \cdot h$, which integrates $\mathscr{a}$. The source-simply connected Lie groupoid integrating $\mathrm{Graph}(\mathscr{a})$ is the graph of $\mathscr{A}$. We denote this Lie groupoid by $\mathscr{K} \coloneqq (\mathrm{Graph}(\mathscr{A}) \rightrightarrows P)$, which forms a Lie subgroupoid of $\mathscr{G} \times \mathscr{H}^{\mathrm{op}} \times \mathrm{Pair}(P)$. Applying Lie's second theorem, we integrate the Lie algebroid morphism $R$ to a Lie groupoid morphism $N \colon T\mathscr{K} \to T\mathscr{K}$. Thus, $N$ constitutes a multiplicative $(1,1)$-tensor field on $\mathscr{K}$.

  Let $N_{1} \in \Omega^{1}(\mathscr{G}, T\mathscr{G})$ and $N_{2} \in \Omega^{1}(\mathscr{H}, T\mathscr{H})$ be the respective integrations of the IM $(1,1)$-tensor fields $R_{1}$ and $R_{2}$. Since Lemma \ref{lemma:integration-tangent-lift} guarantees that $J^{\mathrm{tg}}$ integrates to $J \times J$, the uniqueness of integration implies that $N$ coincides with the restriction of $(N_{1} \times N_{2}) \times (J\times J)$ to $\mathrm{Graph}(\mathscr{A})$.

    The condition that $(N_{1} \times N_{2}) \times (J\times J)$ preserves $\mathrm{Graph}(\mathscr{A})$ is equivalent to the Morita $(\mathscr{G},\mathscr{H})$-bibundle $(P, J)$ establishing a Morita equivalence between the multiplicative tensor fields $N_{1}$ and $N_{2}$. Because the vanishing of the Nijenhuis torsion is preserved under integration (see \cite[Section 6.2]{bursztyn-drummond2019multiplicative}), the result follows.
\end{proof}

Considering the one-to-one correspondence between integrable Lie algebroids and source-simply connected Lie groupoids, from Theorem \ref{thm:morita-nijenhuis-groupoid-to-algebroid} and Theorem \ref{thm:morita-nijenhuis-algebroid-to-groupoid} we obtain the following corollary.
\begin{corollary}
    \label{cor:corresp-morita-nijenhuis}
    The Lie functor establishes a one-to-one correspondence between Morita equivalences of multiplicative $(1,1)$-tensor fields on source-simply connected Lie groupoids and Morita equivalences of IM $(1,1)$-tensor fields. Moreover, Morita equivalences of source-simply connected Nijenhuis groupoids correspond bijectively to Morita equivalences of infinitesimal Nijenhuis structures.
\end{corollary}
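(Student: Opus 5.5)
The corollary follows by combining Theorems \ref{thm:morita-nijenhuis-groupoid-to-algebroid} and \ref{thm:morita-nijenhuis-algebroid-to-groupoid} with the bijections already available at each layer of the data. We fix source-simply connected $\mathscr{G}$ and $\mathscr{H}$. The data of a global Morita equivalence of multiplicative $(1,1)$-tensor fields is a quadruple $(P,J,N_1,N_2)$, where $P$ is a Morita $(\mathscr{G},\mathscr{H})$-bibundle. The infinitesimal data is a quadruple $(P,J,R_1,R_2)$, where $P$ is an infinitesimal Morita $(A_{\mathscr{G}},A_{\mathscr{H}})$-bibundle. The map from global to infinitesimal data differentiates the actions and the tensors and keeps $(P,J)$ unchanged. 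Theorem \ref{thm:morita-nijenhuis-groupoid-to-algebroid} shows that the output is an infinitesimal Morita equivalence. Theorem \ref{thm:morita-nijenhuis-algebroid-to-groupoid} gives a map in the opposite direction, which integrates the actions and the IM tensors and again keeps $(P,J)$.

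To show these maps are mutually inverse, we check each layer separately. On the bibundle layer, Lemma \ref{lemma:correspondence-morita} together with the uniqueness of integration of complete actions (\eqref{eq:integration-action}) shows that differentiating the integrated actions returns $\mathscr{a}_1,\mathscr{a}_2$. Conversely, integrating the differentiated actions returns $\mathscr{A}_1,\mathscr{A}_2$. The second statement holds because two actions of a source-connected groupoid with the same infinitesimal action coincide. On the tensor layer, Lemma \ref{lemma:tensor fields-correspondence-nijenhuis} gives $N_i\leftrightarrow R_i$ bijectively. The intertwining tensor $J$ is literally unchanged in both directions, and Lemma \ref{lemma:integration-tangent-lift} shows that $J\times J$ and $J^{\mathrm{tg}}$ correspond.

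For the Nijenhuis refinement, we observe that both directions preserve $J$ itself. The Nijenhuis condition on the intertwining tensor is therefore identical on the two sides, because Definition \ref{def:morita-equivalence-linear-tensor} and Definition \ref{def:morita-nijenhuis} both ask that $J$ be Nijenhuis. The remark after Proposition \ref{prop:morita-equiv-characterization} shows that it makes no difference whether the global condition is imposed on $J$ or on $K$. The tensors $N_i$ and $R_i$ are simultaneously Nijenhuis by Lemma \ref{lemma:tensor fields-correspondence-nijenhuis}. So the bijection restricts to Nijenhuis equivalences.

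The main obstacle is making precise what the objects of the correspondence are. An infinitesimal bibundle requires connected, simply connected $\mu_i$-fibers, while a general global bibundle does not. I would handle this by noting that when $\mathscr{G}$ and $\mathscr{H}$ are source-simply connected, the fibers of $\mu_2$ are diffeomorphic to $\mathbf{s}$-fibers of $\mathscr{G}$, because the $\mathscr{G}$-action on them is free and transitive. The fibers of $\mu_1$ are likewise diffeomorphic to $\mathbf{t}$-fibers of $\mathscr{H}$. Both kinds of fiber are therefore connected and simply connected. Once this is checked, Lemma \ref{lemma:correspondence-morita} applies in both directions without extra hypotheses, and the remainder is bookkeeping.
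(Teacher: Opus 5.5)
Your proposal is correct and takes the paper's route. The paper's one-line justification combines Theorems \ref{thm:morita-nijenhuis-groupoid-to-algebroid} and \ref{thm:morita-nijenhuis-algebroid-to-groupoid} with the bijection between integrable Lie algebroids and source-simply connected Lie groupoids. When you fix $\mathscr{G}$ and $\mathscr{H}$ at the start, you are implicitly using that the algebroids on the infinitesimal side are automatically integrable (Remark \ref{rmk:daniel-complete}); that step should be stated for the correspondence to cover all infinitesimal Morita equivalences. Your explicit check that the $\mu_i$-fibers are diffeomorphic to source and target fibers, and hence connected and simply connected, is a worthwhile detail the paper leaves implicit when it applies Lemma \ref{lemma:correspondence-morita}(a).
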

In particular, the following is a consequence of Theorem \ref{thm:morita-equiv-relation}. 
\begin{corollary}
  Morita equivalence of infinitesimal Nijenhuis structures is an equivalence relation. 
\end{corollary}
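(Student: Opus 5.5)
The plan is to transport the three properties from the global setting via Corollary \ref{cor:corresp-morita-nijenhuis}, using Theorem \ref{thm:morita-equiv-relation} as the only real input. By Remark \ref{rmk:daniel-complete}, every Lie algebroid appearing in an infinitesimal Morita equivalence is integrable. We therefore fix, for each such algebroid $A$, its source-simply connected integration $\mathscr{G}_{A}$. Given an infinitesimal Nijenhuis structure $(A,R)$, Lemma \ref{lemma:tensor fields-correspondence-nijenhuis} provides a unique multiplicative Nijenhuis tensor field $N_{R}$ on $\mathscr{G}_{A}$ differentiating to $R$.

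\textbf{Symmetry and transitivity.} Suppose $(A_{1},R_{1})$ and $(A_{2},R_{2})$ are Morita equivalent via $(P,J)$, and $(A_{2},R_{2})$ and $(A_{3},R_{3})$ are Morita equivalent via $(Q,J')$. Theorem \ref{thm:morita-nijenhuis-algebroid-to-groupoid} turns these into Nijenhuis Morita equivalences of the source-simply connected groupoids, $(\mathscr{G}_{A_{1}},N_{R_{1}})\sim(\mathscr{G}_{A_{2}},N_{R_{2}})$ via $(P,J)$ and $(\mathscr{G}_{A_{2}},N_{R_{2}})\sim(\mathscr{G}_{A_{3}},N_{R_{3}})$ via $(Q,J')$. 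Theorem \ref{thm:morita-equiv-relation} then gives a Nijenhuis Morita equivalence between $N_{R_{1}}$ and $N_{R_{3}}$ via $(P\diamond Q, J\diamond J')$. Theorem \ref{thm:morita-nijenhuis-groupoid-to-algebroid} differentiates it to an infinitesimal Nijenhuis Morita equivalence between $R_{1}$ and $R_{3}$, because the Lie functor sends $N_{R_{i}}$ back to $R_{i}$. Symmetry is handled the same way using the opposite bibundle, or directly by swapping the roles of $\mathscr{a}_{1}$ and $\mathscr{a}_{2}$.

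\textbf{Reflexivity.} Apply the same scheme to the identity bibundle $G_{0}\xleftarrow{\mathbf{t}}G_{1}\xrightarrow{\mathbf{s}}G_{0}$ with tensor $N\times N\times N$ restricted, that is, $N$ itself. Its differentiation is an infinitesimal Morita bibundle $(\mathscr{G}_{A},N_{R})$ for $(A,R)$.

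\textbf{Main obstacle.} The one point needing care is that the bibundles produced globally must satisfy the extra hypotheses of Definition \ref{def:morita-equiv-algebroid}: the moment maps need connected and simply connected fibers. Lemma \ref{lemma:correspondence-morita}(a) is stated without this caveat, so it can be invoked directly. Still, I would check it explicitly here. For the reflexive bibundle $\mathscr{G}_{A}$, the $\mathbf{s}$- and $\mathbf{t}$-fibers are simply connected because $\mathscr{G}_{A}$ is source-simply connected. For the composite $P\diamond Q$, the $\mu_{1}$-fibers of $P\diamond Q$ are the orbits of the principal action of $\mathscr{G}_{A_{3}}$. These are diffeomorphic to $\mathbf{t}$-fibers of the source-simply connected groupoid $\mathscr{G}_{A_{3}}$, hence connected and simply connected, and similarly for the other moment map.

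\textbf{Nijenhuis condition.} Finally, note that the Nijenhuis condition on the intertwining tensor $J\diamond J'$ is preserved through both passages. Theorem \ref{thm:morita-equiv-relation} shows it vanishes globally, and since $J\diamond J'$ is itself the base tensor of the infinitesimal equivalence, its vanishing Nijenhuis torsion transfers verbatim.
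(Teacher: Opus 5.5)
Your proposal is correct and takes the paper's own route. The paper derives the corollary in one line: it transports Theorem \ref{thm:morita-equiv-relation} through the global–infinitesimal correspondence, integrating via Theorem \ref{thm:morita-nijenhuis-algebroid-to-groupoid} and differentiating back via Theorem \ref{thm:morita-nijenhuis-groupoid-to-algebroid}. Your explicit check that the reflexive and composite bibundles have connected, simply connected moment-map fibers (required by Definition \ref{def:morita-equiv-algebroid} but left implicit in Lemma \ref{lemma:correspondence-morita}(a)) is a worthwhile addition, though reflexivity only makes sense for integrable $A$, which is the paper's standing scope.
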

 
\section{Compatibility with quasi-symplectic groupoids and Dirac structures}
\label{sec:symplectic-nijenhuis}
In this section, we apply the framework of Morita equivalence for multiplicative $(1,1)$-tensor fields, developed in Section \ref{sec:nijenhuis-groupoids}, to Lie groupoids equipped with geometric structures compatible with the Nijenhuis condition.  Building upon the established notion of Morita equivalence for quasi-symplectic groupoids introduced by Xu \cite{xu2004momentum}, our primary objective is to extend this definition to \textit{quasi-symplectic-Nijenhuis groupoids} and prove that it constitutes an equivalence relation (Theorem \ref{thm:morita-equiv-rel-symp-nij}).
 
Subsequently, we investigate the infinitesimal counterpart of this theory as an application of Section \ref{sec:nijenhuis-lie-algebroids}. Recall that a Morita equivalence of quasi-symplectic groupoids differentiates to a dual pair of Dirac structures satisfying additional conditions \cite{frejlich-marcut2018dualpairs}. We enhance this correspondence by imposing compatibility with infinitesimal Nijenhuis structures, leading to a notion of Morita equivalence for Dirac-Nijenhuis structures (Definition \ref{def:morita-dirac-nijenhuis}). Finally, we establish the consistency of this framework via the Lie functor: Theorems \ref{thm:symp-nij-morita-dirac-nij} and \ref{thm:morita-diracnij-to-sympnij} prove the global-to-infinitesimal correspondence.
\subsection{Quasi-symplectic-Nijenhuis groupoids}
We begin by recalling that a \textbf{quasi-symplectic groupoid} \cite{xu2004momentum,BCWZ2004integration} (called \textit{twisted presymplectic groupoid in \cite{BCWZ2004integration}}) is a triple $(\mathscr{G},\omega,\eta)$, where $\mathscr{G}=\left( G_{1}\rightrightarrows G_{0} \right)$ is a Lie groupoid such that $\operatorname{dim}G_{1}=2\operatorname{dim}G_{0}$, $\omega\in \Omega^{2}(G_{1}) $ is a multiplicative $2$-form and  $\eta\in  \Omega^{3}(G_{0})$ is a closed $3$-form satisfying 
\begin{equation}
  \label{eq:quasi-symplectic}
  \mathrm{d}\omega = \mathbf{s}^{*}\eta - \mathbf{t}^{*}\eta\qquad \text{and}\qquad \operatorname{ker}\omega \cap \operatorname{ker} \mathrm{d}\mathbf{s} \cap \operatorname{ker} \mathrm{d}\mathbf{t} = 0.
\end{equation}
Although Morita equivalence for quasi-symplectic groupoids can be formulated in terms of Morita spans (see \cite{mayrand2023shifted,cueca2023shifted}), we give preference to the original definition \cite{xu2004momentum} here. This is because the formulation via spans does not naturally equip the intertwining groupoid with a multiplicative $2$-form, a feature which is essential for stating the compatibility with the Nijenhuis tensor field in our framework.

Recall from \cite{xu2004momentum} (see also \cite[Appendix A]{alekseev-meinrenken2024coadjoint}) that a \textbf{Morita equivalence of quasi-symplectic groupoids} $\left(\mathscr{G},\omega_{1},\eta_{1}\right)$ and $\left( \mathscr{H},\omega_{2}, \eta_{2} \right)$ is given by a Morita bibundle $G_{0}\leftarrow P\to H_{0}$ equipped with a $2$-form $\varpi\in \Omega^{2}(P)$, 
\[
\begin{tikzcd}
    {(G_{1},\omega_1)} \arrow[d, shift right] \arrow[d, shift left] & {(P,\varpi)} \arrow[ld, "\mu_{1}" '] \arrow[rd, "\mu_{2}"] & {(H_1,\omega_2)} \arrow[d, shift right] \arrow[d, shift left] \\
{(G_0,\eta_1)}                                                  &                                    & (H_0,\eta_2),                                               
\end{tikzcd}
\]
satisfying the following conditions:
\begin{enumerate}[label=(\alph*)]
    \item $\mathrm{d}\varpi = \mu_{2}^{*}\eta_{2}- \mu^{*}_{1}\eta_{1}$;            \item $\ker \varpi \cap \operatorname{ker} \mathrm{d}\mu_{1}\cap \operatorname{ker}\mathrm{d}\mu_{2}=0$;
    \item $\mathscr{A}^{*}\varpi = \operatorname{pr}_{G_1}^* \omega_1  + \operatorname{pr}_P^*\varpi + \operatorname{pr}_{H_1}^* \omega_2$,
\end{enumerate}
where $\mathscr{A}\colon G_1\times_{G_{0}}P\times_{H_{0}}H_{1}\to P$ is the action $(g,p,h)\mapsto g\cdot p\cdot h$. This definition extends the notion of Morita equivalence for symplectic groupoids introduced by Xu \cite{xu1991symplectic}. 
\begin{remark}
   \label{rmk:morita-equiv-relation-symplectic}
   Morita equivalence of quasi-symplectic groupoids is indeed an equivalence relation \cite[Theorem 4.5]{xu2004momentum}. For transitivity, given two principal bibundles $G_{0}\gets (P_{1},\varpi_{1})\to  H_{0}$ and $H_{0}\gets (P_{2},\varpi_{2})\to  G_{0}'$, a $2$-form $\varpi\in \Omega^{2}(P)$ is obtained on the quotient $P= P_{1}\diamond P_{2} = \left(P_{1}\times_{H_{0}}P_{2}\right) / \mathscr{H}$, realizing a Morita equivalence $G_{0}\gets (P,\varpi)\to G_{0}'$. The $2$-form $\varpi$ is defined by the property
\begin{equation}
  \label{eq:reduction-symplectic}
  \pi^{*}\varpi = \mathrm{pr}_{1}^{*}\varpi_{1} + \mathrm{pr}_{2}^{*}\varpi_{2},
\end{equation}
where $\pi:P_{1}\times_{H_{0}} P_{2}\to P$ is the quotient map and $\mathrm{pr}_{i}:P_{1}\times_{H_{0}} P_{2} \to P_{i}$ are the projections. This construction will be used when we extend this relation to \textit{quasi-symplectic-Nijenhuis groupoids}.
\end{remark}
To define quasi-symplectic-Nijenhuis groupoids we require a compatibility between $2$-forms and $(1,1)$-tensor fields.  Consider the following notation. For any $p$-form $\alpha \in \Omega^{p}\left( M \right)$ and vector-valued $1$-form $K\in \Omega^{1}(M,TM)$, write $\alpha_{K}$ to denote
\[
    \alpha_{K}(u_{1},u_{2},\ldots,u_{p}) \coloneqq \alpha\left( K ( u_{1} ),u_{2},\ldots,u_{p} \right).
\]
    A $2$-form $\omega\in \Omega^{2}(M)$ and a $(1,1)$-tensor field $K\in \Omega^{1}(M,TM)$ on a manifold $M$ are said to be \textbf{compatible} \cite{magri1984geometrical} if
    \begin{align}
        \label{eq:compatible-2-form-1}
  &\omega^{\flat}  \circ K  = K^{*} \circ \omega^{\flat} \qquad \text{and}\\
  \label{eq:compatible-2-form-2}
  &\mathrm{d}(\omega_{K}) = (\mathrm{d}\omega)_{K}.
    \end{align}
    In this case, $(\omega,K)$  is called a \textbf{compatible pair}.
\begin{definition}
  \label{def:quasi-symplectic-nijenhuis-groupoids}
  A \textbf{quasi-symplectic-Nijenhuis groupoid} is a quadruple $(\mathscr{G},\omega,\eta,N)$, where $(\mathscr{G},\omega,\eta)$ is a quasi-symplectic groupoid and $N\in \Omega^{1}(\mathscr{G},T\mathscr{G})$ is a multiplicative Nijenhuis tensor field that is compatible with the $2$-form $\omega$.
\end{definition}
\begin{example}
  \label{eg:holomorphic-symplectic-groupoid}
  Following the terminology of \cite{bursztyn-drummond-netto2022dirac}, a \textit{holomorphic presymplectic groupoid} is a holomorphic groupoid $(\mathscr{G},I_{\mathscr{G}})$ equipped with a multiplicative holomorphic closed $2$-form $\Omega \in \Omega^{(2,0)}(G_{1})$ that satisfies $\operatorname{ker}\Omega\cap \operatorname{ker}\mathrm{d}\mathbf{s}\cap \operatorname{ker}\mathrm{d}\mathbf{t}=0$. Equivalently, a holomorphic presymplectic groupoid is described as a quasi-symplectic-Nijenhuis groupoid $(\mathscr{G},\omega, 0 ,I_{\mathscr{G}})$, where $\mathrm{d}\omega=0$ and $\Omega = \omega - i \omega_{I_{\mathscr{G}}}$ \cite[Section 6.3]{bursztyn-drummond-netto2022dirac}. 
\end{example}

\begin{definition}
  \label{def:morita-equivalence-symplectic-nijenhuis}
  A \textbf{Morita equivalence of quasi-symplectic-Nijenhuis groupoids} $(\mathscr{G},\omega_{1},\eta_{1},N_{1})$ and $(\mathscr{H},\omega_{2},\eta_{2},N_{2})$ is given by a Morita $(\mathscr{G},\mathscr{H})$-bibundle $P$ equipped with a $2$-form $\varpi\in \Omega^{2}(P)$ and a $(1,1)$-tensor field $J\in \Omega^{1}(P,TP)$,
\[
\begin{tikzcd}
    (G_{1},\omega_1,N_{1}) \arrow[d, shift right] \arrow[d, shift left] & (P,\varpi,J) \arrow[ld, "\mu_{1}" '] \arrow[rd, "\mu_{2}"] & (H_1,\omega_2,N_{2}) \arrow[d, shift right] \arrow[d, shift left] \\
(G_0,\eta_1,r_{1})                                                  &                                    & (H_0,\eta_2,r_{2}),                                               
\end{tikzcd}
\]
satisfying the following conditions:
\begin{enumerate}[label=(\alph*)]
      \item $(P,\varpi)$ is a Morita equivalence of quasi-symplectic groupoids;
      \item $( P,J )$ is a Morita equivalence of Nijenhuis groupoids, and
      \item $(\varpi,J)$ is a compatible pair.
  \end{enumerate}
\end{definition}
\begin{example}
  \label{eg:holomorphic-morita-symplectic}
  Two holomorphic presymplectic groupoids $(\mathscr{G},\Omega_{1},I_{\mathscr{G}})$ and $(\mathscr{H},\Omega_{2},I_{\mathscr{H}})$ are Morita equivalent if and only if, there is a holomorphic Morita bibundle $G_{0}\xleftarrow{\ \mu_{1} \ } (P,I_{P}) \xrightarrow{\ \mu_{2} \ } H_{0}$ (Example \ref{eg:holomorphic-groupoids-morita}) and a holomorphic closed $2$-form $\Omega_{P} = \varpi - i \varpi_{I_{P}} \in \Omega^{(2,0)}(P)$ satisfying $\mathscr{A}^{*}\varpi = \mathrm{pr}_{G_{1}}^{*}\omega_{1} + \mathrm{pr}_{P}^{*}\varpi + \mathrm{pr}_{H_{1}}^{*}\omega_{2}$ and $\operatorname{ker}\varpi \cap \operatorname{ker}\mathrm{d}\mu_{1}\cap \operatorname{ker}\mathrm{d}\mu_{2}=0$, where $\Omega_{1}=\omega_{1}-i\omega_{1\, I_{\mathscr{G}}}$ and $\Omega_{2}=\omega_{2}-i \omega_{2\, I_{\mathscr{H}}}$.
\end{example}
 
\begin{theorem}
  \label{thm:morita-equiv-rel-symp-nij}
  Morita equivalence of quasi-symplectic-Nijenhuis groupoids is an equivalence relation.
\end{theorem}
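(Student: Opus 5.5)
Reflexivity, symmetry and transitivity each split into three pieces matching conditions (a)--(c) of Definition \ref{def:morita-equivalence-symplectic-nijenhuis}. Conditions (a) and (b) are already known to be equivalence relations, by \cite[Theorem 4.5]{xu2004momentum} (Remark \ref{rmk:morita-equiv-relation-symplectic}) and Theorem \ref{thm:morita-equiv-relation}. Moreover, both constructions use the \emph{same} underlying bibundle: $G_{1}$ for reflexivity, the opposite bibundle for symmetry, and $P_{1}\diamond P_{2}$ for transitivity. So I only need to check that compatibility (c) of the pair $(\varpi,J)$ is preserved by each construction.

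\textbf{Reflexivity.} Take $(G_{1},\omega,N)$ as the bibundle. Here $(\omega,N)$ is compatible by Definition \ref{def:quasi-symplectic-nijenhuis-groupoids}.

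\textbf{Symmetry.} The opposite bibundle carries $(-\varpi,J)$, since the sign of $\varpi$ flips when the roles of $\mu_{1},\mu_{2}$ and $\eta_{1},\eta_{2}$ are exchanged. Both \eqref{eq:compatible-2-form-1} and \eqref{eq:compatible-2-form-2} are linear in the $2$-form, so compatibility persists.

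\textbf{Transitivity.} Let $(P_{1},\varpi_{1},J_{1})$ and $(P_{2},\varpi_{2},J_{2})$ be compatible, and let $\pi\colon Z\coloneqq P_{1}\times_{H_{0}}P_{2}\to P$ be the quotient map. By Remark \ref{rmk:morita-equiv-relation-symplectic} and the proof of Theorem \ref{thm:morita-equiv-relation}:
\begin{itemize}
\item $\pi^{*}\varpi=\tilde\varpi\coloneqq(\mathrm{pr}_{1}^{*}\varpi_{1}+\mathrm{pr}_{2}^{*}\varpi_{2})|_{Z}$;
\item $\tilde J\coloneqq(J_{1}\times J_{2})|_{TZ}$ is $\pi$-related to $J=J_{1}\diamond J_{2}$.
\end{itemize}

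First I check that $(\tilde\varpi,\tilde J)$ is compatible on $Z$.
\begin{itemize}
\item On $P_{1}\times P_{2}$, the pair $(\mathrm{pr}_{1}^{*}\varpi_{1}+\mathrm{pr}_{2}^{*}\varpi_{2},\,J_{1}\times J_{2})$ is compatible, since both conditions hold factorwise.
\item Compatibility restricts to $Z$ because $J_{1}\times J_{2}$ preserves $TZ$. Condition \eqref{eq:compatible-2-form-1} restricts pointwise. For \eqref{eq:compatible-2-form-2}, note that $\iota^{*}(\alpha_{K})=(\iota^{*}\alpha)_{K|}$ whenever $K$ is tangent to the submanifold $Z$ with inclusion $\iota$, and that $\iota^{*}$ commutes with $\mathrm{d}$.
\end{itemize}

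Next I descend compatibility along $\pi$, a surjective submersion with $\mathrm{d}\pi\circ\tilde J=J\circ\mathrm{d}\pi$.
\begin{itemize}
\item Since $\pi$ intertwines the tensors, $\pi^{*}(\varpi_{J})=(\pi^{*}\varpi)_{\tilde J}$, and the same holds for $3$-forms.
\item For \eqref{eq:compatible-2-form-1}: $\varpi(Jx,y)-\varpi(x,Jy)$ pulls back to $\tilde\varpi(\tilde Ju,v)-\tilde\varpi(u,\tilde Jv)=0$, where $u,v$ lift $x,y$. Injectivity of $\pi^{*}$ on forms then gives the identity downstairs.
\item For \eqref{eq:compatible-2-form-2}: $\pi^{*}\mathrm{d}(\varpi_{J})=\mathrm{d}(\tilde\varpi_{\tilde J})=(\mathrm{d}\tilde\varpi)_{\tilde J}=\pi^{*}((\mathrm{d}\varpi)_{J})$. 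Injectivity of $\pi^{*}$ again gives the identity on $P$.
\end{itemize}

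The main point requiring care is the first step: compatibility, and in particular the derivative condition \eqref{eq:compatible-2-form-2}, must restrict correctly to the fiber product $Z$. This relies on $J_{1}\times J_{2}$ being tangent to $Z$, which was established in the proof of Theorem \ref{thm:morita-equiv-relation} using \eqref{eq:tensor fields-related-moment-map}. Conditions (a) and (b) for $P_{1}\diamond P_{2}$ are exactly the known transitivity results, so the three conditions together show that $(P_{1}\diamond P_{2},\varpi,J_{1}\diamond J_{2})$ is a Morita equivalence of quasi-symplectic-Nijenhuis groupoids.
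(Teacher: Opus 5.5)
Your proposal is correct and follows essentially the same route as the paper. You invoke Xu's transitivity and Theorem \ref{thm:morita-equiv-relation} to reduce everything to the compatibility of $(\varpi, J_{1}\diamond J_{2})$, check compatibility of $(\pi^{*}\varpi, J_{1}\times J_{2})$ on the fiber product, and then descend along the surjective submersion $\pi$. You also make explicit the reflexivity and symmetry cases (via $(G_{1},\omega,N)$ and $(-\varpi,J)$), which the paper's proof leaves implicit. Your restriction step (through $\iota^{*}$) and descent step (through injectivity of $\pi^{*}$) are a cleaner rendering of the paper's computation.
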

\begin{proof} 
    Let $(P_{1},\varpi_{1},J_{1})$ and $(P_{2},\varpi_{2},J_{2})$ be Morita $(\mathscr{G},\mathscr{H})$- and $(\mathscr{H},\mathscr{G}')$-bibundles, respectively, realizing Morita equivalences between the quasi-symplectic--Nijenhuis groupoids $(\mathscr{G},\omega_{1},\eta_{1},N_{1})$, $(\mathscr{H},\omega_{2},\eta_{2},N_{2})$, and $(\mathscr{G}',\omega_{3},\eta_{3},N_{3})$. Let $P = P_{1}\diamond P_{2}$ denote the quotient of the fiber product $P_{1}\times_{H_{0}}P_{2}$ by the diagonal action of $\mathscr{H}$, and let $\pi:P_{1}\times _{H_{0}}P_{2}\to P$ be the natural projection. The proof relies on the following results:
\begin{enumerate}
    \item Theorem \ref{thm:morita-equiv-relation} establishes that $(P,J_{1}\diamond J_{2})$ constitutes a Morita bibundle between the Nijenhuis groupoids $(\mathscr{G},N_{1})$ and $(\mathscr{G}',N_{3})$, where the Nijenhuis tensor field $J_{1}\diamond J_{2} \in \Omega^{1}(P,TP)$ is the projection of $J_{1}\times J_{2}$. 
    \item By \cite[Theorem 4.5]{xu2004momentum}, the pair $(P,\varpi)$ defines a Morita bibundle between the quasi-symplectic groupoids $(\mathscr{G},\omega_{1},\eta_{1})$ and $(\mathscr{G}',\omega_{3},\eta_{3})$, where the $2$-form $\varpi\in \Omega^{2}(P)$ is defined by Equation \eqref{eq:reduction-symplectic}.
\end{enumerate}
It remains only to prove the compatibility between the $(1,1)$-tensor field $J_{1}\diamond J_{2}$ and the $2$-form $\varpi$ on the quotient manifold $P$. 

\textit{Compatibility on the fiber product.} First, we show that the compatibility holds for the $(1,1)$-tensor field $J_{1}\times J_{2} \in \Omega^{1}\left( P_{1}\times_{H_{0}} P_{2},T(P_{1}\times P_{2}) \right)$ and the $2$-form $\pi^{*}\varpi = \mathrm{pr}_{1}^{*}\varpi_{1}+\mathrm{pr}_{2}^{*}\varpi_{2} \in \Omega^{2}\left( P_{1}\times_{H_{0}} P_{2} \right)$. To verify the first condition, Equation \eqref{eq:compatible-2-form-1}, let $(v_{1},v_{2})$ be a vector field on $P_{1}\times_{H_{0}} P_{2}$. Using the fact that $(\varpi_{1},J_{1})$ and $(\varpi_{2},J_{2})$ are compatible pairs, we have
 \begin{align*}
\left( \mathrm{pr}_{1}^{*}\varpi_{1}+\mathrm{pr}_{2}^{*}\varpi_{2} \right)^{\flat} \left( J_{1}\times J_{2}(v_{1},v_{2}) \right)  & = \varpi_{1}^{\flat}\left( J_{1}(v_{1}) \right)\circ \mathrm{pr}_{1} + \varpi_{2}^{\flat}\left( J_{2}(v_{2}) \right)\circ \mathrm{pr}_{2}\\
                                                                                                                                       & = J_{1}^{*}\left( \varpi_{1}^{\flat}(v_{1}) \right)\circ \mathrm{pr}_{1} + J_{2}^{*} \left( \varpi_{2}^{\flat}(v_{2}) \right)\circ\mathrm{pr}_{2}\\
                                                                                                                                       & =J_{1}^{*} \times J_{2}^{*} \left( \mathrm{pr}_{1}^{*}\varpi_{1}^{\flat} + \mathrm{pr}_{2}^{*} \varpi_{2}^{\flat}\right) \left( v_{1},v_{2} \right).
 \end{align*}
This shows that $(\pi^{*}\varpi)^{\flat}\circ (J_{1}\times J_{2})= (J_{1}\times J_{2})^{*} \circ (\pi^{*}\varpi)^{\flat} $. To verify the second condition, Equation \eqref{eq:compatible-2-form-2}, we observe that the Lie bracket of vector fields on $P_{1}\times P_{2}$ decomposes component-wise:  $\left[ (v_{1},v_{2}) , (u_{1},u_{2}) \right]=\left( \left[ v_{1},v_{2} \right] , \left[ u_{1},u_{2} \right]\right)$. Consequently, computing the exterior derivative of $\left( \pi^{*}\varpi \right)_{J_{1}\times J_{2}}$ and comparing it with that of $ \left( \pi^{*}\varpi\right)$ yields
 \begin{equation}
     \label{eq:equation_in_proof}
  \mathrm{d}\left(\left( \pi^{*}\varpi \right)_{J_{1}\times J_{2}}\right) = \left( \mathrm{d}\left(\pi^{*}\varpi \right) \right)_{J_{1}\times J_{2}}.
\end{equation}
Thus, the pair $(\pi^{*}\varpi,J_{1}\times J_{2})$ is compatible.

\textit{Descent to the quotient.} Finally, these compatibility relations descend to the quotient $P$. Consider a vector field $\bar{v} \in \mathfrak{X}(P)$ as the image under $\mathrm{d}\pi$ of some $\pi$-projectable vector field $v=(v_{1},v_{2}) \in \mathfrak{X}\left( P_{1}\times_{H_{0}}P_{2} \right)$. Recall that $J_{1}\times J_{2}$ preserves projectable vector fields and that it is $\pi$-related to $J_{1}\diamond J_{2}$. Using \eqref{eq:reduction-symplectic} and evaluating at $\bar{v} =\mathrm{d}\pi(v)$, we find: 
\begin{align*}
    \varpi^{\flat} \circ (J_{1}\diamond J_{2}) \left( \mathrm{d} \pi (v) \right) & = \varpi^{\flat} \circ \mathrm{d}\pi \left( J_{1}\times J_{2} \left( v \right)\right)\\
                                                                 & = \pi^{*}\varpi^{\flat} (J_{1}\times J_{2} (v))\\
                                                                 & =  J_{1}^{*}\varpi_{1}^{\flat}(v_{1}) + J_{2}^{*}\varpi_{2}^{\flat}(v_{2})\\
                                                                 & = \left(  J_{1}\times J_{2} \right)^{*}(\pi^{*}\varpi)^{\flat}(v)  \\
                                                                 & = (J_{1}\diamond J_{2})^{*}\circ \varpi^{\flat} (\mathrm{d}\pi( v) ).
\end{align*}
Therefore $\varpi^{\flat}\circ (J_{1}\diamond J_{2}) = (J_{1}\diamond J_{2})^{*} \circ\varpi^{\flat}$. Similarly, using \eqref{eq:equation_in_proof} and the naturality of the exterior derivative, we obtain:
\begin{align*}
    \left( \mathrm{d}\varpi \right)_{J_{1}\diamond J_{2}}\left( \mathrm{d}\pi (u),\mathrm{d}\pi (v),\mathrm{d}\pi(w)\right)  & = \mathrm{d}\varpi \left( J_{1}\diamond J_{2}\left( \mathrm{d}\pi (u) \right), \mathrm{d}\pi(v), \mathrm{d}\pi(w) \right)\\
                                                                                                                             & =\pi^{*} \mathrm{d}\varpi \left( J_{1}\times J_{2}(u) , u ,w \right)\\
                                                                                                                             & = \mathrm{d}\left( \mathrm{pr}_{1}^{*}\varpi_{1}+\mathrm{pr}_{2}^{*}\varpi \right)\left( J_{1}\times J_{2}(u), v, w\right)\\
                                                                                                                             & = \mathrm{d}\left( \left( \pi^{*}\varpi \right)_{J_{1}\times J_{2}} \right) \left( u,v,w\right) .
\end{align*}
Computing the exterior derivative of $\left( \pi^{*}\varpi \right)_{J_{1}\times J_{2}}$ and using the naturality of the Lie bracket of vector fields, we get:
\begin{align*}
    \mathrm{d}\left( \left( \pi^{*}\varpi \right)_{J_{1}\times J_{2}} \right) \left( u,v,w \right)  & = \mathrm{d}\left( \varpi_{\mathrm{d}\pi\left( J_{1}\times J_{2} \right)} \right)  \left( \mathrm{d}\pi(u), \mathrm{d}\pi(v), \mathrm{d}\pi(w) \right) \\
                                                                                                                & =\mathrm{d}\left( \varpi_{J_{1}\diamond J_{2}} \right)   \left( \mathrm{d}\pi(u), \mathrm{d}\pi(v), \mathrm{d}\pi(w) \right).
\end{align*}
Therefore $\mathrm{d}(\varpi_{J_{1}\diamond J_{2}}) = (\mathrm{d}\varpi)_{J_{1}\diamond J_{2}}$. This completes the proof.
\end{proof}
 
\subsection{Twisted Dirac-Nijenhuis structures}
The infinitesimal counterparts of quasi-symplectic groupoids are \textbf{$\eta$-twisted Dirac structures}. Recall that a $\eta$-twisted Dirac structure is a Lagrangian subbundle $L\subseteq \mathbb{T}M \coloneqq TM\oplus T^{*}M$ whose space of sections is closed under the \textbf{$\eta$-Courant bracket}:
\begin{equation}
    \label{eq:courant-bracket}
    \llbracket (u,\alpha),(v,\beta)\rrbracket_{\eta}\coloneqq \left( \left[ u,v \right],\mathscr{L}_{u}\beta - \iota_{v}\mathrm{d}\alpha + \iota_{u \wedge v}\eta\right), \quad \eta \in \Omega^{3}(M).
\end{equation}
Every twisted Dirac structure $L$ is intrinsically a Lie algebroid, where the anchor is the projection to $TM$ and the bracket is the restriction of the $\eta$-Courant bracket. The conditions for when an arbitrary Lie algebroid is isomorphic to a Dirac structure are detailed in \cite{BCWZ2004integration}.

\subsubsection{Integrating Dirac structures.}
The precise correspondence between twisted Dirac structures and quasi-symplectic groupoids is established in \cite[Theorem 2.2]{BCWZ2004integration} and \cite[Theorem 2.4]{BCWZ2004integration}: an integrable Lie algebroid $A_{\mathscr{G}}$ is isomorphic to an $\eta$-twisted Dirac structure $L$ on its base if and only if its source-simply connected integration $\mathscr{G}$ admits a quasi-symplectic structure. This correspondence requires the target map to be a \textit{forward Dirac map}. A map $\mu:(M,L_M) \to (N,L_N)$ is said to be a \textbf{forward Dirac map } if for every $p\in M$
\begin{equation}
    \label{eq:forward-dirac}
    (L_N)_{\mu(p)} \coloneqq \left\{ (\mathrm{d}\mu(v), \alpha) \in \mathbb{T}_{\mu(p)}N \mid v \in T_pM, (v,(\mathrm{d}\mu)^{*}(\alpha)) \in L_M \right\}.
\end{equation}

\subsubsection{Twisted Dirac-Nijenhuis structures} We now turn to the compatibility with infinitesimal Nijenhuis structures that corresponds to quasi-symplectic-Nijenhuis groupoids. These infinitesimal objects are called \textit{twisted Dirac-Nijenhuis structures}. This compatibility between $(1,1)$-tensor fields and (twisted) Dirac structures corresponds to the compatibility of the IM $(1,1)$-tensor field with the Lie algebroid structure. We use the notation $\mathbb{D}^{r}_{v} \coloneqq  ( \nabla^{r}_{v},\nabla^{r,*}_{v} )$, where $\nabla^{r}$ and $\nabla^{r,*}$ are as defined in Example \ref{eg:tangent-cotangent-lift}.

We recall from \cite{bursztyn-drummond-netto2022dirac} a generalization of the compatibility for pairs given in Equations \eqref{eq:compatible-2-form-1} and \eqref{eq:compatible-2-form-2}.  Let $L\subseteq TM\oplus T^{*}M$ be a subbundle. A $(1,1)$-tensor field $r\in \Omega^{1}(M,TM)$ is said to be \textbf{compatible} with $L$ if
\begin{equation}
\label{eq:lagrangian-compatible} \left(r,r^{*}\right)(L) \subseteq L \qquad \text{and}\qquad \mathbb{D}^{r}_{v}\left( \Gamma(L) \right)\subseteq \Gamma(L), \end{equation}
for every $v \in \mathfrak{X}(M)$.
\begin{definition}[\cite{bursztyn-drummond-netto2022dirac}]
    \label{def:dirac-nijenhuis}
    A \textbf{Dirac-Nijenhuis structure} on $M$ is a pair $(L,r)$, where $L\subseteq TM\oplus T^{*}M$ is a (twisted) Dirac structure and $r\in \Omega^{1}(M,TM)$ a Nijenhuis tensor field compatible with $L$.
\end{definition}
\begin{example}[Poisson-Nijenhuis manifold \cite{magri1984geometrical,kosmann1990poisson}]
  \label{eg:poisson-nijenhuis}
  Let $(M,\pi)$ be a Poisson manifold and $r\in \Omega^{1}(M,TM)$ a Nijenhuis tensor field. Consider the Dirac structure $L=\mathrm{Graph}(\pi^{\sharp})$. Then $(L,r)$ is Dirac-Nijenhuis if and only if
\begin{align}
    & \pi^{\sharp}\circ r^{*} = r\circ \pi^{\sharp}  \label{eq:poisson-nijenhuis1} \\
    &R_{\pi}^{r}\left(v,\alpha  \right) \coloneqq \pi^{\sharp}\left( \mathscr{L}_{v}r^{*}(\alpha) - \mathscr{L}_{r(v)}\alpha \right)-\left( \mathscr{L}_{\pi^{\sharp}(\alpha)}r \right)(v)=0, \label{eq:poisson-nijenhuis2}
\end{align} 
for every $v \in \mathfrak{X}(M)$ and $\alpha\in \Omega^{1}(M)$. Under condition \eqref{eq:poisson-nijenhuis1}, $R^{r}_{\pi}:\mathfrak{X}(M)\times \Omega^{1}(M)\to \mathfrak{X}(M)$ is $C^{\infty }(M)$-bilinear; it is called the \textit{Magri-Morosi concomitant}. Equations \eqref{eq:poisson-nijenhuis1} and \eqref{eq:poisson-nijenhuis2} are precisely the conditions for the triple $(M,\pi,r)$ to be a \textit{Poisson-Nijenhuis manifold}.
\end{example}

The relation between $(1,1)$-tensor fields $r\in \Omega^{1}(M,TM)$ compatible with a Dirac structure $L\subseteq \mathbb{T}M$ and the Lie algebroid defined by $L$ is provided by \cite[Lemma 6.1]{bursztyn-drummond-netto2022dirac}: if $r \in \Omega^{1}(M,TM)$ is compatible with $L$, then the $1$-derivation $\left( \mathbb{D}^{r}|_{\Gamma(L)}, (r,r^{*}), r \right)$ is a IM $1$-derivation on the Lie algebroid $L$. Moreover, if $r$ is Nijenhuis then the corresponding $1$-derivation is Nijenhuis.
\begin{corollary}
  \label{cor:dirac-compatibility-algebroid}
  Let $L\subseteq \mathbb{T}M$ be a Dirac structure. If $r\in \Omega^{1}(M,TM)$ is a $(1,1)$-tensor field compatible with $L$, then $(r^{\mathrm{tg}}, r^{\mathrm{cotg}})$ is a IM $(1,1)$-tensor field on the Lie algebroid $L\to M$. Moreover, if $r$  is Nijenhuis then $(L\to M,(r^{\mathrm{tg}}, r^{\mathrm{cotg}}))$ is an infinitesimal Nijenhuis structure.
\end{corollary}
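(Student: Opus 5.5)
The plan is to obtain this corollary directly from \cite[Lemma 6.1]{bursztyn-drummond-netto2022dirac}, read through the correspondence \eqref{eq:1-der-linear-tensor} between linear $(1,1)$-tensor fields and $1$-derivations.

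First, view $L\subseteq TM\oplus T^{*}M$ as a vector subbundle. The direct sum $r^{\mathrm{tg}}\times r^{\mathrm{cotg}}$ (more precisely, its fibered version over $TM$) is a linear $(1,1)$-tensor field on $TM\oplus T^{*}M$. By Example \ref{eg:tangent-cotangent-lift}, its $1$-derivation is $\big((\nabla^{r},\nabla^{r,*}),(r,r^{*}),r\big)=(\mathbb{D}^{r},(r,r^{*}),r)$. The correspondence \eqref{eq:1-der-linear-tensor} is additive for direct sums, since vertical lifts and Lie derivatives split componentwise, so this identification is immediate.

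Second, show that this tensor field restricts to $TL\subseteq T(\mathbb{T}M)$ and that the restriction corresponds to $(\mathbb{D}^{r}|_{\Gamma(L)},(r,r^{*})|_L,r)$. Here the characterization \eqref{eq:related_tensors_derivation} applies to the inclusion $\Phi:L\hookrightarrow \mathbb{T}M$ covering the identity. The compatibility conditions \eqref{eq:lagrangian-compatible} state exactly that the endomorphism component $(r,r^{*})$ preserves $L$ and that the connection component $\mathbb{D}^{r}$ preserves $\Gamma(L)$. Hence $(\mathbb{D}^{r}|_{\Gamma(L)},(r,r^{*})|_L,r)$ is a well-defined $1$-derivation on $L$, with the Leibniz rule inherited from $\mathbb{T}M$.

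Let $R_L$ be the linear tensor field on $L$ corresponding to this $1$-derivation. The derivations of $R_L$ and $r^{\mathrm{tg}}\times r^{\mathrm{cotg}}$ are $\Phi$-intertwined, so \eqref{eq:related_tensors_derivation} gives $\mathrm{d}\Phi\circ R_L=(r^{\mathrm{tg}},r^{\mathrm{cotg}})\circ \mathrm{d}\Phi$. That is, $(r^{\mathrm{tg}},r^{\mathrm{cotg}})$ preserves $TL$ and its restriction is $R_L$.

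Third, invoke \cite[Lemma 6.1]{bursztyn-drummond-netto2022dirac}. It says this $1$-derivation satisfies the IM-equations \eqref{eq:im-equations} for the Lie algebroid $L$, with the restricted $\eta$-Courant bracket and the projection anchor. It follows that $R_L=(r^{\mathrm{tg}},r^{\mathrm{cotg}})|_{TL}$ is an IM $(1,1)$-tensor field.

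For the Nijenhuis part, the same lemma states that the $1$-derivation satisfies the Nijenhuis equations \eqref{eq:nijenhuis-1-derivation} when $r$ is Nijenhuis. Alternatively, $r^{\mathrm{tg}}$ and $r^{\mathrm{cotg}}$ are Nijenhuis by Example \ref{eg:tangent-cotangent-lift}, so their product is Nijenhuis, and its restriction to the invariant submanifold $L$ is Nijenhuis by naturality of the torsion under the inclusion. Either way, $(L,(r^{\mathrm{tg}},r^{\mathrm{cotg}}))$ is an infinitesimal Nijenhuis structure in the sense of Definition \ref{def:nijenhuis-lie-algebroid}.

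The only delicate step is the restriction argument in the second step. It requires the $1$-derivation correspondence to be natural with respect to subbundle inclusions, meaning that preservation of $\Gamma(L)$ by $\mathbb{D}^{r}$ really corresponds to tangency of the tensor field to $L$. I would settle this with \eqref{eq:related_tensors_derivation} as above, checking along the way that $\mathbb{D}^r$ preserving $\Gamma(L)$ makes the restricted operator satisfy the Leibniz rule \eqref{eq:leibniz} with $\ell=(r,r^{*})|_L$.
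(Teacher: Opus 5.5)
Your proposal is correct and follows the paper's route. The paper simply defers to \cite[Section 3.3]{bursztyn-drummond-netto2022dirac} together with \cite[Lemma 6.1]{bursztyn-drummond-netto2022dirac}, and your three steps unpack that citation: identify $(r^{\mathrm{tg}},r^{\mathrm{cotg}})$ with the $1$-derivation $(\mathbb{D}^{r},(r,r^{*}),r)$, restrict it to $L$ using \eqref{eq:lagrangian-compatible} and \eqref{eq:related_tensors_derivation}, and then read off the IM and Nijenhuis equations from the cited lemma.

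One caution: the cited lemma is for the untwisted Courant bracket, so your phrase ``restricted $\eta$-Courant bracket'' is only justified for $\eta=0$, as in the statement. For $\xi=(u,\alpha)$ and $\zeta=(w,\beta)$, the second IM equation picks up the extra term $\iota_{u\wedge r(w)}\eta-r^{*}\iota_{u\wedge w}\eta$, which \eqref{eq:lagrangian-compatible} does not control. For example, take $L=D\oplus\operatorname{ann}(D)$ with $D=\langle\partial_{1},\partial_{2}\rangle\subset T\mathbb{R}^{3}$, $\eta=\mathrm{d}x^{1}\wedge\mathrm{d}x^{2}\wedge\mathrm{d}x^{3}$ and constant $r=\mathrm{diag}(\lambda_{1},\lambda_{2},\lambda_{3})$ with $\lambda_{2}\neq\lambda_{3}$: then $r$ is compatible with $L$, yet the IM equations fail.
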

\begin{proof}
  See \cite[Section 3.3]{bursztyn-drummond-netto2022dirac}.
\end{proof}
\subsubsection{Integrating Dirac-Nijenhuis structures.} The following theorem is an adaptation of \cite[Theorem 6.3]{bursztyn-drummond-netto2022dirac} to the twisted case where $\eta\neq 0$. The proof is similar.
\begin{theorem}
  \label{thm:symplectic-nijenhuis-integration}
  Let $(\mathscr{G},\omega,\eta)$ be a quasi-symplectic groupoid integrating the $\eta$-twisted Dirac structure $L\subseteq \mathbb{T}M$. Let $N \in \Omega^{1}(\mathscr{G},T\mathscr{G})$ be a Nijenhuis $(1,1)$-tensor field integrating the $1$-derivation $\left( D,\ell ,r \right)$. If $\omega$ and $N$ are compatible (Equations \eqref{eq:compatible-2-form-1} and \eqref{eq:compatible-2-form-2}), then $L$ and $r$ are compatible in the sense of Equations \eqref{eq:lagrangian-compatible}, and the converse holds when $\mathscr{G}$ is source connected.
\end{theorem}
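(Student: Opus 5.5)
The plan is to reduce the twisted statement to the untwisted argument of \cite[Theorem 6.3]{bursztyn-drummond-netto2022dirac}. The twisting only enters through the $\eta$-term in the Courant bracket and through $\mathrm{d}\omega = \mathbf{s}^{*}\eta-\mathbf{t}^{*}\eta$. We encode $\omega$ by its associated IM $2$-form (infinitesimal multiplicative form) $\sigma:L\to T^{*}M$, $\sigma(u,\alpha)=\alpha$, together with the anchor $\rho(u,\alpha)=u$. We then express the two compatibility conditions \eqref{eq:compatible-2-form-1} and \eqref{eq:compatible-2-form-2} for $(\omega,N)$ as infinitesimal conditions on the triple $(\sigma,\eta)$ and the IM $1$-derivation $(D,\ell,r)$. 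The key tool is the infinitesimal description of multiplicative forms: a multiplicative form on a source connected groupoid is determined by its IM components, and a relation between multiplicative forms holds if and only if it holds at the infinitesimal level.

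First, compute the infinitesimal counterpart of \eqref{eq:compatible-2-form-1}. The form $\omega_{N}(u,v)=\omega(N u,v)$ is multiplicative because $N$ is a groupoid morphism of $T\mathscr{G}$. The condition $\omega^{\flat}\circ N=N^{*}\circ\omega^{\flat}$ says that $\omega_N$ is again a $2$-form, i.e. skew. Evaluating on right-invariant vector fields $\overrightarrow{\xi}$ and $\mathbf{t}$-projectable vectors at units, using \eqref{eq:IM-derivation-groupoid}, shows that this is equivalent to two conditions: $\sigma\circ\ell = r^{*}\circ\sigma$, and $\ell$ being intertwined by $\rho$ with $r$ (already part of the IM equations). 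Because $L$ is Lagrangian and $\ell$ acts on $L$, this yields $\ell=(r,r^{*})|_L$, which is the condition $(r,r^{*})(L)\subseteq L$. The converse direction uses that $\omega_N - (\omega_N)^{\mathrm{skew}}$ is multiplicative and vanishes infinitesimally, hence vanishes by source connectedness.

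Second, treat \eqref{eq:compatible-2-form-2}. Here $\mathrm{d}(\omega_N)$ is a multiplicative $3$-form. Moreover, $(\mathrm{d}\omega)_N=(\mathbf{s}^{*}\eta-\mathbf{t}^{*}\eta)_N = \mathbf{s}^{*}\eta_r-\mathbf{t}^{*}\eta_r$, since $N$ covers $r$ under both $\mathbf{s}$ and $\mathbf{t}$. Thus \eqref{eq:compatible-2-form-2} says that the multiplicative $2$-form $\omega_N$ is $\eta_r$-twisted-closed relative to the boundary. Passing to IM forms via the twisted version of the Bursztyn–Cabrera correspondence, this becomes a condition on the IM form $\sigma_r=r^{*}\circ\sigma$ of $\omega_N$: the twisted IM equations with $\eta$ replaced by $\eta_r$. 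Unwinding these with the explicit brackets and \eqref{eq:im-equations} identifies the connection part $D$ with $\mathbb{D}^{r}|_{\Gamma(L)}$, where the Lie derivative terms come from the Courant bracket and the $\iota_{u\wedge v}\eta$ contribution is matched exactly by the $\eta_r$ twist. This gives $\mathbb{D}^r_v(\Gamma(L))\subseteq\Gamma(L)$.

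The main obstacle is this last bookkeeping of the $\eta$-terms. One must check that the $\eta$-contributions to the IM equations for $\omega_N$ cancel against those in $\mathbb{D}^r$ applied to the $\eta$-twisted bracket. I would first verify it on the untwisted formula of \cite{bursztyn-drummond-netto2022dirac}, then isolate the $\eta$-dependent terms and check they agree using $\mathrm{d}\eta=0$ and the Leibniz rule \eqref{eq:leibniz}. For the converse, given compatibility of $L$ and $r$, the $1$-derivation $(\mathbb{D}^r|_{\Gamma(L)},(r,r^{*}),r)$ is IM by \cite[Lemma 6.1]{bursztyn-drummond-netto2022dirac}, so it coincides with $(D,\ell,r)$. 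Then both identities \eqref{eq:compatible-2-form-1} and \eqref{eq:compatible-2-form-2} hold infinitesimally, and source connectedness, through uniqueness of multiplicative forms with given IM data, gives them globally.
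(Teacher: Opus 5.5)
Your route (translate both compatibility conditions into infinitesimal data and rerun the untwisted argument of \cite[Theorem 6.3]{bursztyn-drummond-netto2022dirac} while tracking $\eta$) is the one the paper has in mind; the paper only says the proof is similar to the untwisted case. The forward direction can be repaired, but the converse has two genuine gaps.

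\textbf{First gap: the identification of $(D,\ell,r)$.} You assert that $(\mathbb{D}^{r}|_{\Gamma(L)},(r,r^{*}),r)$ ``coincides with $(D,\ell,r)$''. Nothing forces this, because an IM $1$-derivation is not determined by its base component. Take the symplectic groupoid $T^{*}\mathbb{R}\rightrightarrows\mathbb{R}$ (coordinates $(x,p)$, $\omega=\mathrm{d}p\wedge\mathrm{d}x$), which integrates $L=\{0\}\oplus T^{*}\mathbb{R}$. The tensor $N_{1}=\partial_{p}\otimes\mathrm{d}p$ is multiplicative, Nijenhuis, covers $r=0$, and $r=0$ is trivially compatible with $L$. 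Yet $\ell=\mathrm{id}\neq(r,r^{*})|_{L}$ and $\omega(N_{1}\partial_{p},\partial_{x})=1\neq0=\omega(\partial_{p},N_{1}\partial_{x})$. So $(D,\ell)=(\mathbb{D}^{r}|_{\Gamma(L)},(r,r^{*})|_{L})$ must be an extra hypothesis of the converse, not a consequence; it is exactly what your forward direction outputs. Relatedly, your claim that \eqref{eq:compatible-2-form-1} is equivalent to $\sigma\circ\ell=r^{*}\circ\sigma$ plus $\rho\circ\ell=r\circ\rho$ is false. The symmetric part $S$ of $\omega_{N}$ is multiplicative but is not determined by $S(\overrightarrow{\xi},\cdot)|_{TM}$: its second infinitesimal component $(\mathscr{L}_{\overrightarrow{\xi}}S)|_{TM}$ involves $D$. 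For $N_{2}=p\,\partial_{p}\otimes\mathrm{d}x$ on the same groupoid, $\ell=0$ and $r=0$, so both your conditions hold, yet $\omega(N_{2}u,v)=p\,\mathrm{d}x(u)\,\mathrm{d}x(v)=-\omega(u,N_{2}v)$.

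\textbf{Second gap: the $\eta$-bookkeeping.} The bookkeeping you postpone is not closed by $\mathrm{d}\eta=0$, and in the converse it does not close at all. Since $\omega_{N}$ is a $2$-form, \eqref{eq:compatible-2-form-2} forces $(\mathrm{d}\omega)_{N}=\mathbf{s}^{*}\eta_{r}-\mathbf{t}^{*}\eta_{r}$ to be alternating. Evaluating on $(\overrightarrow{\xi},y,z)$ and $(y,\overrightarrow{\xi},z)$ at a unit gives $\eta(r\rho(\xi),y,z)=\eta(\rho(\xi),ry,z)$.

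In the forward direction this identity is exactly what you need. Your passage to the IM equations of $(r^{*}\circ\sigma,\eta_{r})$ never sees $D$; with the second IM equation it only determines $D$ along $\operatorname{im}\rho$. Instead, use the convention $\iota_{\overrightarrow{\xi}}\omega=\mathbf{t}^{*}\sigma(\xi)$, so that $\mathscr{L}_{\overrightarrow{\xi}}\omega=\mathbf{t}^{*}(\mathrm{d}\sigma(\xi)-\iota_{\rho(\xi)}\eta)$, together with $(\mathscr{L}_{\overrightarrow{\xi}}N)(v)=\overrightarrow{D_{\mathrm{d}\mathbf{t}(v)}\xi}$. Compare $\mathscr{L}_{\overrightarrow{\xi}}(\omega_{N})=(\mathscr{L}_{\overrightarrow{\xi}}\omega)_{N}+\omega_{\mathscr{L}_{\overrightarrow{\xi}}N}$ with $\mathrm{d}\iota_{\overrightarrow{\xi}}\omega_{N}+\iota_{\overrightarrow{\xi}}(\mathrm{d}\omega)_{N}$. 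This yields $\sigma(D_{y}\xi)=\nabla^{r,*}_{y}\sigma(\xi)+\eta(\rho(\xi),ry,\cdot)-\eta(r\rho(\xi),y,\cdot)$, and the $\eta$-terms cancel precisely by the identity above.

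In the converse, that identity is not implied by \eqref{eq:lagrangian-compatible}, which does not involve $\eta$. Here is a counterexample.
\begin{itemize}
\item On $\mathbb{R}^{3}$ take $\eta=\mathrm{d}x_{1}\wedge\mathrm{d}x_{2}\wedge\mathrm{d}x_{3}$ and $L=\langle\partial_{x_{1}}\rangle\oplus\langle\mathrm{d}x_{2},\mathrm{d}x_{3}\rangle$. It is $\eta$-twisted Dirac, since $\rho$ has rank one and the twist term vanishes on $\Gamma(L)$.
\item $L$ is integrated by $G_{1}=\{(x_{1}',x_{1},x_{2},x_{3},p_{2},p_{3})\}$ with $\mathbf{s}=(x_{1},x_{2},x_{3})$, $\mathbf{t}=(x_{1}',x_{2},x_{3})$ and $\omega=\mathrm{d}p_{2}\wedge\mathrm{d}x_{2}+\mathrm{d}p_{3}\wedge\mathrm{d}x_{3}+(x_{1}-x_{1}')\,\mathrm{d}x_{2}\wedge\mathrm{d}x_{3}$.
\item For $r=\mathrm{diag}(a,b,b)$, the pair $(L,r)$ satisfies \eqref{eq:lagrangian-compatible}. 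The tensor $N=\mathrm{diag}(a,a,b,b,b,b)$ integrates $(\mathbb{D}^{r}|_{\Gamma(L)},(r,r^{*})|_{L},r)$ and satisfies \eqref{eq:compatible-2-form-1}, since $\omega_{N}=b\,\omega$.
\item However $\mathrm{d}(\omega_{N})(\partial_{x_{1}},\partial_{x_{2}},\partial_{x_{3}})=b$, while $(\mathrm{d}\omega)_{N}(\partial_{x_{1}},\partial_{x_{2}},\partial_{x_{3}})=a$.
\end{itemize}

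For the same reason, \cite[Lemma 6.1]{bursztyn-drummond-netto2022dirac}, which you invoke, does not transfer to twisted $L$. The second IM equation for $(\mathbb{D}^{r}|_{\Gamma(L)},(r,r^{*}),r)$ acquires the defect $\eta(\rho(\xi),\rho(\zeta),r\,\cdot)-\eta(\rho(\xi),r\rho(\zeta),\cdot)$. For instance, take $L=\mathrm{Graph}(\partial_{x_{1}}\wedge\partial_{x_{2}})$ with the same $\eta$ and $r=\mathrm{diag}(a,a,c)$, $c\neq a$. So in the twisted setting the converse needs an additional hypothesis relating $\eta$ and $r$, and your converse argument cannot be completed as it stands.
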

When $\mathscr{G}$ is source-simply-connected, the previous theorem sets  a bijective correspondence between quasi-symplectic-Nijenhuis structures on $\mathscr{G}$ and Dirac-Nijenhuis structures on $M$.
\subsection{Global and infinitesimal Morita equivalences of compatible structures}
In this subsection, we introduce a notion of Morita equivalence for Dirac-Nijenhuis structures which corresponds to the infinitesimal counterpart of Morita equivalence for quasi-symplectic Nijenhuis groupoids. To do so, we first need to review the Morita equivalence of Dirac manifolds themselves. Similarly to how a Morita equivalence of Poisson manifolds can be expressed in terms of symplectic realizations, Morita equivalences of Dirac structures are expressed in terms of the so-called \textit{Dirac realizations}, or more precisely a \textit{dual pair}, a concept detailed in \cite{frejlich-marcut2018dualpairs}.

In the following, we fix two twisted Dirac manifolds $(M_{1},L_{1},\eta_{1})$ and $(M_{2},L_{2},\eta_{2})$ and consider a manifold $P$ equipped with a $2$-form $\varpi\in \Omega^{2}(P)$. Consider surjective submersions $\mu_{1}:P\to M_{1}$ and $\mu_{2}:P\to M_{2}$, define $\eta\coloneqq \mu_{2}^{*}\eta_{2}-\mu_{1}^{*}\eta_{1}$, and equip $P$ with the $\eta$-twisted Dirac structure $L_{\varpi}$. We denote the opposite Dirac structure on $M_2$ by $\bar{L}_{2}\coloneqq \{ (v,-\alpha)\mid (v,\alpha)\in L_{2} \}$. Furthermore, let $\mathscr{a}_{i}:\Gamma(L_{i})\to \mathfrak{X}(P)$ be the infinitesimal actions uniquely determined by the condition $\mathscr{a}_{i}\left( \mathrm{d}\mu_{i}(v), \alpha \right) = v$, where $v\in \mathfrak{X}(P)$ and $(\mathrm{d}\mu_{i}(v), \alpha) \in \Gamma(L_i)$.

\begin{definition}
  \label{def:morita-equiv-dirac}
  The Dirac manifolds $(M_{1},L_{1},\eta_{1})$ and $(M_{2},L_{2},\eta_{2})$ are said to be \textbf{Morita equivalent} if there exists a manifold $P$ equipped with a $2$-form $\varpi \in \Omega^{2}(P)$, and two surjective submersions
  \[
    (M_{1},L_{1})\xleftarrow{\quad \mu_{1}\quad} (P,\varpi) \xrightarrow{\quad \mu_{2}\quad } (M_{2},\bar{L}_{2}),
  \]
  satisfying the following condition:
  \begin{enumerate}[label=(\alph*)]
    \item The maps $\mu_{1}$ and $\mu_{2}$ are forward Dirac maps and their fibers are simply connected;
    \item The actions $\mathscr{a}_{1}$ and $\mathscr{a}_{2}$ are complete;
    \item $\operatorname{ker}\varpi \cap \operatorname{ker}\mathrm{d}\mu_{1}\cap \operatorname{ker}\mathrm{d}\mu_{2} = 0$; \label{item:non-deg}
    \item $\varpi\left( \operatorname{ker}\mathrm{d}\mu_{1},\operatorname{ker}\mathrm{d}\mu_{2} \right)=0$; \label{item:orthogonal}
    \item $\mathrm{d}\varpi = \eta$.
  \end{enumerate}
  In this setting, we call $(P,\varpi)$ an \textbf{infinitesimal Morita $(L_{1},L_{2})$-bibundle}. 
\end{definition}
\begin{example}[\cite{xu1991morita}]
  \label{eg:poisson-morita}
  Two Poisson manifolds $(M_{1},\pi_{1})$ and $(M_{2},\pi_{2})$ are Morita equivalent if there exists a symplectic manifold $(P,\varpi)$ and two Poisson morphisms $\mu_{1}:(P,\varpi)\to (M_{1},\pi_{1})$ and $\mu_{2}:(P,\varpi)\to (M_{2},-\pi_{2})$ that are also surjective submersions. Furthermore, the fibers of these maps must be connected and simply connected, and mutually $\varpi$-orthogonal. In this case, the graphs $L_{1}\coloneqq \mathrm{Graph}(\pi_{1})$ and $L_{2}\coloneqq \mathrm{Graph}(\pi_{2})$ are Morita equivalent Dirac manifolds. 
\end{example}
 
The following proposition justifies Definition \ref{def:morita-equiv-dirac} by showing that the Lie algebroids associated with  two Morita equivalent twisted Dirac structures are Morita equivalent as Lie algebroids. Consequently, we distinguish between these contexts by explicitly referring to an \textit{infinitesimal Morita bibundle of Dirac structures} for the setting of Definition \ref{def:morita-equiv-dirac}, and an \textit{infinitesimal Morita bibundle of Lie algebroids} when viewing $L_{1}$ and $L_{2}$ as Lie algebroids as in Definition \ref{def:morita-equiv-algebroid}.

\begin{proposition}
  \label{prop:morita-dirac-algebroid-same}
If two twisted Dirac manifolds are Morita equivalent via an infinitesimal Morita $(L_{1},L_{2})$-bibundle $(P,\varpi)$, then their respective Lie algebroids are Morita equivalent in the sense of Definition \ref{def:morita-equiv-algebroid} with $P$ serving as the infinitesimal Morita bibundle.
\end{proposition}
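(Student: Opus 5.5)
We need to check the conditions of Definition \ref{def:morita-equiv-algebroid} for the surjective submersions $\mu_{1}$, $\mu_{2}$ and the maps $\mathscr{a}_{1}:\Gamma(L_{1})\to\mathfrak{X}(P)$, $\mathscr{a}_{2}:\Gamma(\bar L_{2})\to\mathfrak{X}(P)$, regarding $\bar L_{2}$ as the Lie algebroid $L_{2}^{\mathrm{op}}$ (via $(v,\alpha)\mapsto(-v,\alpha)$ or a similar identification). Fiber simple-connectedness and completeness are assumed in (a) and (b). The plan has three steps.

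\emph{Step 1: the actions are well-defined and injective.} Since $\mu_{i}$ is forward Dirac, every $(u,\alpha)\in (L_{i})_{\mu_{i}(p)}$ is of the form $(\mathrm{d}\mu_{i}(v),\alpha)$ with $(v,\mathrm{d}\mu_{i}^{*}\alpha)\in (L_{\varpi})_{p}$, that is, $\iota_{v}\varpi=\mu_{i}^{*}\alpha$. I will show that $v$ is unique and lies in $\ker\mathrm{d}\mu_{j}$ for $j\neq i$. Take $i=1$. If $v,v'$ are two such lifts, then $w=v-v'$ satisfies $\mathrm{d}\mu_{1}w=0$ and $\iota_{w}\varpi=0$. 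For $w$ to be zero by (c), we also need $\mathrm{d}\mu_{2}w=0$. I will get this by choosing the lift in $\ker\mathrm{d}\mu_{2}$ from the outset. By (d), $\varpi(\ker\mathrm{d}\mu_{1},\ker\mathrm{d}\mu_{2})=0$, and a dimension count using (c) gives $\ker\mathrm{d}\mu_{2}=(\ker\mathrm{d}\mu_{1})^{\varpi}$ and the reverse. Hence $\iota_{v}\varpi=\mu_{1}^{*}\alpha$ forces $v\in(\ker\mathrm{d}\mu_{1})^{\varpi}=\ker\mathrm{d}\mu_{2}$, so $w\in\ker\mathrm{d}\mu_{1}\cap\ker\mathrm{d}\mu_{2}\cap\ker\varpi=0$. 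This gives a well-defined map $\mathscr{a}_{1}$. It is $C^{\infty}(M_{1})$-linear, satisfies $\mathrm{d}\mu_{1}\circ\mathscr{a}_{1}=\rho$, and has image in $\ker\mathrm{d}\mu_{2}$. Injectivity holds because $\mathscr{a}_{1}(u,\alpha)=0$ forces $u=0$ and $\mu_{1}^{*}\alpha=0$, and $\mu_{1}$ is a submersion. The argument for $\mathscr{a}_{2}$ is symmetric.

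\emph{Step 2: the images are the kernels.} We have $\operatorname{im}\mathscr{a}_{1,p}\subseteq\ker\mathrm{d}\mu_{2,p}$, and $\operatorname{rank}L_{1}=\dim M_{1}$. So it suffices to show $\dim\ker\mathrm{d}\mu_{2}=\dim M_{1}$, i.e.\ $\dim P=\dim M_{1}+\dim M_{2}$. For this I will use the $\varpi$-orthogonality $\ker\mathrm{d}\mu_{2}=(\ker\mathrm{d}\mu_{1})^{\varpi}$ together with the forward-Dirac property. The surjection $\ker\mathrm{d}\mu_{2}\to\ker\mathrm{d}\mu_{1}^{\circ}\cong T^{*}M_{1}$ modulo $\ker\varpi$ pairs with $L_{1}$ being Lagrangian. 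Alternatively, I will directly show that every $v\in\ker\mathrm{d}\mu_{2}$ satisfies $(\mathrm{d}\mu_{1}v,\alpha)\in L_{1}$ for some $\alpha$ with $\iota_{v}\varpi=\mu_{1}^{*}\alpha$. Indeed, $\iota_{v}\varpi$ annihilates $\ker\mathrm{d}\mu_{1}$ by (d), so it descends to some $\alpha$, and the forward-Dirac condition puts $(\mathrm{d}\mu_{1}v,\alpha)$ in $L_{1}$. This gives $v=\mathscr{a}_{1}(\mathrm{d}\mu_{1}v,\alpha)$, which is surjectivity without any dimension count.

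\emph{Step 3: bracket preservation and commutation.} This is the main obstacle. To show that $\mathscr{a}_{1}$ is a Lie algebra morphism, I will note that the pair $(\mathscr{a}_{1}(a),\mu_{1}^{*}\alpha)$ is a section of $L_{\varpi}$ that is $\mu_{1}$-related to $a=(u,\alpha)$. Forward Dirac maps are compatible with Courant brackets: with the twist $\eta=\mu_{2}^{*}\eta_{2}-\mu_{1}^{*}\eta_{1}$, the $\eta$-bracket upstairs projects to the $\eta_{1}$-bracket downstairs on $\mu_{1}$-related pairs. The $\mu_{2}^{*}\eta_{2}$ term vanishes on pairs of vectors in $\ker\mathrm{d}\mu_{2}$, and the sign must be checked against the convention for $\bar L_{2}$. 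Then uniqueness from Step 1 gives $\mathscr{a}_{1}[a,b]=[\mathscr{a}_{1}a,\mathscr{a}_{1}b]$. For the commutation $[\mathscr{a}_{1}(a),\mathscr{a}_{2}(b)]=0$, I will compute the Courant bracket of the two lifted sections of $L_{\varpi}$. Its tangent part lies in $\ker\mathrm{d}\mu_{1}\cap\ker\mathrm{d}\mu_{2}$, since the vector fields are $\mu_{i}$-projectable to $0$ or to a field on the other side. Its cotangent part is $\mathscr{L}_{X}\mu_{2}^{*}\beta-\iota_{Y}\mathrm{d}\mu_{1}^{*}\alpha+\iota_{X\wedge Y}\eta$, which reduces to $0$ using $\mathrm{d}\mu_{2}X=0$ and $\mathrm{d}\mu_{1}Y=0$. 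Hence the bracket lies in $\ker\varpi\cap\ker\mathrm{d}\mu_{1}\cap\ker\mathrm{d}\mu_{2}=0$ by (c).
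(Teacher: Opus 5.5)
\paragraph{Verdict.} The proposal has a genuine gap in Step 1: it never shows that the actions $\mathscr{a}_i$ are defined on all of $L_i$.

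\paragraph{The false claim in Step 1.} Your commutation argument in Step 3 is the same as the paper's, and your Step 2 argument for surjectivity onto $\operatorname{ker}\mathrm{d}\mu_{2}$ is a more explicit version of the paper's. Step 1, however, rests on a false claim. Conditions (c) and (d) give only $\operatorname{ker}\mathrm{d}\mu_{2}\subseteq(\operatorname{ker}\mathrm{d}\mu_{1})^{\varpi}$. Pointwise, $\dim(\operatorname{ker}\mathrm{d}\mu_{1})^{\varpi}=\dim M_{1}+\dim(\operatorname{ker}\mathrm{d}\mu_{1}\cap\operatorname{ker}\varpi)$, and this can exceed $\dim\operatorname{ker}\mathrm{d}\mu_{2}$.

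Here is an example. Take $M$ compact and simply connected. The pair groupoid $P=M\times M$ with $\varpi=0$ and $\mu_{i}=\mathrm{pr}_{i}$ is a genuine Morita self-equivalence of $L_{1}=L_{2}=TM$. Yet $(\operatorname{ker}\mathrm{d}\mu_{1})^{\varpi}=TP$, and every $(u,w)$ is a lift of $(u,0)$ under the forward Dirac map $\mu_{1}$.

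\paragraph{What is missing.} You have shown that a lift lying in $\operatorname{ker}\mathrm{d}\mu_{2}$ is unique, but not that one exists. Existence is exactly what is needed for $\mathscr{a}_{1}$ to be defined on all of $L_{1}$ with values in $\operatorname{ker}\mathrm{d}\mu_{2}$.

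Your own Step 2 reasoning shows that $v\mapsto(\mathrm{d}\mu_{1}v,\alpha_{v})$ embeds $\operatorname{ker}\mathrm{d}\mu_{2}$ into $L_{1}$. So existence is equivalent to $\dim P=\dim M_{1}+\dim M_{2}$, which is the count you raised and then bypassed. Your ``alternative'' argument there only proves $\operatorname{ker}\mathrm{d}\mu_{2}\subseteq\operatorname{im}\mathscr{a}_{1}$.

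\paragraph{Why separate forward-Diracness is not enough.} The gap cannot be closed using only that $\mu_{1}$ and $\mu_{2}$ are separately forward Dirac with simply connected fibers, together with (b)--(e). Take $P=M_{1}=M_{2}=M$, $\mu_{1}=\mu_{2}=\mathrm{id}$, $\varpi=0$ and $L_{1}=L_{2}=TM$. All of these hypotheses hold, but $\operatorname{ker}\mathrm{d}\mu_{2}=0$ while $L_{1}$ has rank $\dim M$.

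\paragraph{The fix.} Use what the paper's proof invokes: the combined map $\mu=(\mu_{1},\mu_{2})\colon(P,L_{\varpi})\to(M_{1}\times M_{2},L_{1}\times\bar{L}_{2})$ is forward Dirac. Apply this to $\big((u,0),(\alpha,0)\big)\in L_{1}\times\bar{L}_{2}$. It yields $v$ with $\mathrm{d}\mu_{1}v=u$, $\mathrm{d}\mu_{2}v=0$ and $\iota_{v}\varpi=\mu_{1}^{*}\alpha$, and (c) makes $v$ unique. With $\mathscr{a}_{i}$ defined this way, your injectivity, surjectivity and commutation arguments go through.

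\paragraph{Bracket preservation.} Your check in Step 3 that $\mathscr{a}_1$ preserves brackets is a useful addition; the paper does not carry it out. One correction: for lifts $X,Y\in\operatorname{ker}\mathrm{d}\mu_{2}$ of $u,w$, the twist term is $\iota_{X\wedge Y}\eta=-\mu_{1}^{*}\iota_{u\wedge w}\eta_{1}$. So the sign that must be checked is the one attached to $\eta_{1}$, not the convention for $\bar{L}_{2}$.
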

\begin{proof}
    Let $(M_{1},L_{1},\eta_{1})$  and $(M_{2},L_{2},\eta_{2})$ be Morita equivalent Dirac structures. We also denote their respective Lie algebroids by $L_{1}$ and $L_{2}$. Condition \ref{item:non-deg} in Definition \ref{def:morita-equiv-dirac} guarantees a well-defined map
\begin{equation}
   \label{eq:action-dirac-morita}
    \mathscr{a}:\Gamma(L_{1})\oplus\Gamma(L_{2})\to \mathfrak{X}(P), \quad (\widetilde{v}_{1},\alpha_{1})\oplus(\widetilde{v}_{2},\alpha_{2}) \mapsto v, 
\end{equation}
where $v\in \mathfrak{X}(P)$ is the unique vector field such that $\mathrm{d}\mu_{1}(v)=\widetilde{v}_{1}$, $\mathrm{d}\mu_{2}(v)=\widetilde{v}_{2}$ and $\varpi^{\flat}(v) = \mu_{1}^{*}\alpha_{1} -\mu_{2}^{*}\alpha_{2}$. We define the actions
  \[
      \mathscr{a}_{1}:\Gamma(L_{1})\to \mathfrak{X}(P)\qquad \text{and}\qquad \mathscr{a}_{2}:\Gamma(L_{2}) \to \mathfrak{X}(P)
  \]
  by restricting the map $\mathscr{a}$ to $\Gamma(L_{1})\oplus \{ 0 \} $ and $\{ 0 \}\oplus\Gamma(L_{2}) $, respectively. Because $\mu=(\mu_{1},\mu_{2})$ is a forward Dirac map, any section $\xi =(\mathrm{d}\mu_{1}(v_{1}),\alpha)  \in \Gamma(L_{1}) $ satisfies $\mathscr{a}_{1}(\xi) \in \operatorname{ker}\mathrm{d}\mu_{2}$. Similarly,  we have $\mathscr{a}_{2}(\zeta) \in \operatorname{ker}\mathrm{d}\mu_{1}$ for any section $\zeta = (\mathrm{d}\mu_{2}(v_{2}),\alpha_{2})$ of $L_{2}$.

  From condition \ref{item:orthogonal} in Definition \ref{def:morita-equiv-dirac}, it follows that $\varpi(v_{1},v_{2}) = 0 $. Consequently, evaluating the $\eta$-Courant bracket $\llbracket -,- \rrbracket_{\eta}$ yields 
\begin{align*}
    \left\llbracket \left( v_{1},\varpi^{\flat}(v_{1}) \right),\left( v_{2},\varpi^{\flat}(v_{2}) \right) \right\rrbracket_{\eta}& = \left( \left[ v_{1},v_{2} \right], \mathscr{L}_{v_{1}}\varpi^{\flat}(v_{2}) - \iota_{v_{2}}\mathrm{d}\left( \varpi^{\flat}(v_{1}) \right)+ \iota_{v_{1}\wedge v_{2}}\eta \right)\\
                                                                                                                      & = \left( \left[ v_{1},v_{2} \right], \iota_{v_{1}}\mathrm{d}\iota_{v_{2}}\varpi- \iota_{v_{2}}\mathrm{d}\iota_{v_{1}}\varpi \right).
\end{align*}
Because $\mu_{1}$ and $\mu_{2}$ are forward Dirac maps, we have that $\iota_{v_{1}}\varpi = \mu_{1}^{*}\alpha_{1}$ and $\iota_{v_{2}}\varpi = - \mu_{2}^{*}\alpha_{2}$. Using the fact that $v_{1}\in \operatorname{ker}\mathrm{d}\mu_{2}$ and $v_{2}\in \operatorname{ker}\mathrm{d}\mu_{1}$, we observe that $ \iota_{v_{1}}\mathrm{d}\iota_{v_{2}}\varpi- \iota_{v_{2}}\mathrm{d}\iota_{v_{1}}\varpi = - \iota_{v_{1}}\mu^{*}_{2}\mathrm{d}\alpha_{2} - \iota_{v_{2}}\mu_{1}^{*}\mathrm{d}\alpha_{1}=0$. Since $L_{\varpi}$ is closed under the $\eta$-Courant bracket, it follows that  $\varpi^{\flat}\left( \left[ v_{1},v_{2} \right] \right)=0$, meaning $\left[ v_{1},v_{2} \right] \in  \operatorname{ker}\varpi$. Furthermore, the naturality of the Lie bracket of vector fields ensures that $\left[ v_{1},v_{2} \right]$ also lies in the kernels of both $\mathrm{d}\mu_{1}$ and $\mathrm{d}\mu_{2}$. Applying condition \ref{item:non-deg} in Definition \ref{def:morita-equiv-dirac}, we obtain
\[
  \left[ \mathscr{a}_{1}(\xi),\mathscr{a}_{2}(\zeta) \right]=0;
\]
that is, the actions $\mathscr{a}_{1}$ and $\mathscr{a}_{2}$ commute, as required by Definition \ref{def:morita-equiv-algebroid}.

Furthermore, the actions are injective as a consequence of the forward Dirac map condition: if $\mathscr{a}_{1}(\xi)=0$ then $\rho_{1}(\xi)=\mathrm{d}\mu_{1}(0)=0$ and $(\mathrm{d}\mu_{1})^{*}\alpha = \varpi^{\flat}(0)=0$. Since $\mathrm{d}\mu_{1}$ is surjective, $\alpha = 0$, and thus $\xi=0$. An analogous argument shows that $\mathscr{a}_{2}(\zeta)=0$ implies $\zeta=0$. Finally, because $\mu:P\to M_{1}\times M_{2}$ is a forward Dirac map, the identity $\mathscr{a}_{1}\left( \mu_{1}^{*}L_{1} \right)_{p} = \operatorname{ker}\mathrm{d}\mu_{2 \, p}$ holds for every $p \in P$. To see this, note that for any $v \in \operatorname{ker}\mathrm{d}\mu_{2\, p}$, there exists $\alpha \in T_{\mu_{1}(p)}^{*}M_{1}$ such that $\mu_{1}^{*}\xi\coloneqq (\mathrm{d}\mu_{1}(v),\alpha)\in L_{1, \mu_{1}(p)}$. Therefore, $\mathscr{a}_{1}\left( \mu^{*}\xi \right) = v$ as desired. An identical argument establishes that $\mathscr{a}_{2}\left( \mu_{2}^{*}L_{2} \right)_{p} = \operatorname{ker}\mathrm{d}\mu_{1\, p}$. Because the fibers are connected and simply-connected, and the actions complete, the proposition follows.
\end{proof}
 
\begin{definition}
  \label{def:morita-dirac-nijenhuis}
  Two Dirac-Nijenhuis structures $(M_{1},L_{1},r_{1})$ and $(M_{2},L_{2},r_{2})$ are \textbf{Morita equivalent Dirac-Nijenhuis structures} if there exists an infinitesimal Morita bibundle of Lie algebroids
\[
    (M_{1},L_{1},r_{1}) \xleftarrow{\quad \mu_{1}\quad} (P,\varpi,J) \xrightarrow{\quad \mu_{2}\quad } (M_{2},\bar{L}_{2},r_{2})
\]
equipped with a compatible pair $(\varpi,J)$ such that:
\begin{enumerate}[label=(\alph*)]
    \item $(P,\varpi)$ defines a Morita equivalence of the underlying Dirac structures, and
    \item $(P,J)$ defines a Morita equivalence of the underlying infinitesimal Nijenhuis structures.
\end{enumerate}
\end{definition}
\begin{example}[Morita equivalence of Poisson-Nijenhuis manifolds]
  \label{eg:poisson-nijenhuis-morita}
  Consider two Poisson-Nijenhuis manifolds $(M_{1},\pi_{1},r_{1})$ and $(M_{2},\pi_{2},r_{2})$ and let $L_{1}=\mathrm{Graph}(\pi_{1})$ and $L_{2}=\mathrm{Graph}(\pi_{2})$ be the respective Dirac structures they define. The actions that the Lie algebroids associated to $\pi_{1}$ and $\pi_{2}$ define on $P$ are given by
\[
    \mathscr{a}_{i}:\Gamma(L_i)\to \mathfrak{X}(P), \quad (\pi_i(\alpha),\alpha)\mapsto \pi_{\varpi}^{\sharp}(\mu_i^{*}(\alpha)),
\]
where $\pi_{\varpi}^{\sharp}$ is the negative inverse of $\varpi^{\flat}:TM\to T^{*}M$. Then $(M_{1},L_{1},r_{1})$ and $(M_{2},L_{2},r_{2})$ are Morita equivalent Dirac-Nijenhuis structures if and only if there is a Morita equivalence of Poisson manifolds $(M_1,\pi_{1})\xleftarrow{\ \mu_{1}\ } (P,\varpi) \xrightarrow{\ \mu_{2}\ }(M_{2},-\pi_{2})$ together with a Nijenhuis tensor field $J\in \Omega^{1}(P,TP)$ compatible with $\varpi$, satisfying the Morita equivalence of the corresponding 1-derivations.

We express the conditions of the infinitesimal Morita equivalence of the Nijenhuis structures explicitly. Recall that the 1-derivation on the Dirac structure $L_i$ is $\mathscr{D}_i = (\mathbb{D}^{r_i}|_{L_i}, (r_i, r_i^*), r_i)$ (see Corollary \ref{cor:dirac-compatibility-algebroid}), where $\mathbb{D}^{r_i} = (\nabla^{r_i}, \nabla^{r_i,*})$ (see Example \ref{eg:tangent-cotangent-lift}). Considering the definition of the actions $\mathscr{a}_i$, the conditions of Definition \ref{def:morita-equiv-1-derivations} yield:
\begin{equation} 
    \label{eq:poisson-nijenhuis-morita-conditions}
    \begin{cases}
        \mathrm{d}\mu_i \circ J = r_i \circ \mathrm{d}\mu_i, \\
        J \circ \pi_{\varpi}^{\sharp} \circ \mu_i^{*} = \pi_{\varpi}^{\sharp} \circ \mu_i^{*} \circ r_i^*, \\
        \nabla^J_{v}\left( \pi_{\varpi}^{\sharp}(\mu_1^{*}\alpha + \mu_2^{*}\beta) \right) = \pi_{\varpi}^{\sharp}\left( \mu_1^{*}\left( \nabla^{r_{1},*}_{\mathrm{d}\mu_1(v)}\alpha \right) + \mu_2^{*}\left( \nabla^{r_{2},*}_{\mathrm{d}\mu_2(v)}\beta \right) \right),
    \end{cases}
\end{equation}
for every $v\in TP$, $\alpha \in \Omega^1(M_1)$ and $\beta \in \Omega^1(M_2)$, with $i=1,2$.
\end{example}
\begin{example}[Morita equivalence of holomorphic Dirac structures]
  \label{eg:holomorphic-dirac-morita}
  Recall that a \textit{holomorphic Dirac structure} is a holomorphic subbundle $L\subseteq \mathbb{T}^{(1,0)}M \coloneqq T^{(1,0)}M\oplus (T^{(1,0)}M)^{*}$ that is Lagrangian with respect to a natural non-degenerate symmetric pairing, and such that its sheaf of holomorphic sections is closed under the bracket
\[
  \llbracket (u,\alpha),(v,\beta) \rrbracket = \left( \left[ u,v \right], \mathscr{L}_{u}\beta - \iota_{v}\partial\alpha\right).
\]
Holomorphic Dirac structures can be equivalently described in terms of the Nijenhuis structure $I_{M}$ of the complex manifold \cite[Proposition 3.16]{bursztyn-drummond-netto2022dirac}, using the map
\[
    \Phi: \mathbb{T}M \to \mathbb{T}^{(1,0)}M, \quad \Phi(v,\alpha) = \left( \frac{1}{2} (v - iI_{M}(v)),\alpha - i I_{M}^*(\alpha)\right).
\]
Via this identification, we can see that two holomorphic Dirac structures, $L_{1}\subseteq \mathbb{T}^{(1,0)}M_{1}$ and $L_{2}\subseteq \mathbb{T}^{(1,0)}M_{2}$, are Morita equivalent if and only if there is a holomorphic closed $2$-form $\varpi \in \Omega^{(2,0)}(P)$, and two holomorphic forward Dirac maps $\left( M_{1},L_{1} \right)  \xleftarrow{\ \mu_{1}\ } (P,\varpi) \xrightarrow{\ \mu_{2}\ }\left( M_{2},\bar{L}_{2} \right)$, satisfying (a) and (b) in Definition \ref{def:morita-equiv-dirac}, and the actions $\mathscr{a}_{i}:\Gamma(L_{i}) \to \mathfrak{X}(P)$ defined by $(\mathrm{d}\mu_{i}(v),\alpha)\mapsto v$ preserve the holomorphic structure. In particular, Morita equivalences of \textit{holomorphic Poisson structures} \cite{laurent2008holomorphic} are described in this way. 
\end{example}
The next theorem shows that a Morita equivalence of quasi-symplectic-Nijenhuis groupoids (Definition \ref{def:morita-equivalence-symplectic-nijenhuis}) differentiates under the Lie functor to a Morita equivalence of Dirac-Nijenhuis structures as defined above.
\begin{theorem}
  \label{thm:symp-nij-morita-dirac-nij}
  Let $(\mathscr{G},\omega_{1},\eta_{1},N_{1})$ and $(\mathscr{H},\omega_{2},\eta_{2},N_{2})$ be quasi-symplectic-Nijenhuis groupoids. Assume they are Morita equivalent via a Morita $(\mathscr{G},\mathscr{H})$-bibundle
  \[
    \begin{tikzcd}
      {(G_{1},\omega_1,N_{1})} \arrow[d, shift right] \arrow[d, shift left] & {(P,\varpi,J)} \arrow[ld, "\mu_{1}" '] \arrow[rd, "\mu_{2}"] & {(H_1,\omega_2,N_{2})} \arrow[d, shift right] \arrow[d, shift left] \\
      {(G_0,\eta_1,r_{1})} & & {(H_0,\eta_2,r_{2})}.
    \end{tikzcd}
  \]
  Then their associated twisted Dirac-Nijenhuis structures, $(L_{1},r_{1})$ and $(L_{2},r_{2})$, are Morita equivalent, with $(P,\varpi,J)$ serving as the infinitesimal Morita $(L_{1},L_{2})$-bibundle equipped with infinitesimal actions induced by the global actions.
\end{theorem}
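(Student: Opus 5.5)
The plan is to split Definition \ref{def:morita-dirac-nijenhuis} into its three ingredients and obtain each from an earlier result. The compatibility of the pair $(\varpi,J)$ is part (c) of Definition \ref{def:morita-equivalence-symplectic-nijenhuis}, so it carries over unchanged. Everything else concerns the infinitesimal actions, and we take these to be the differentiations $\mathscr{a}_1,\mathscr{a}_2$ of the global actions $\mathscr{A}_1,\mathscr{A}_2$ via \eqref{eq:integration-action}.

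\textbf{Step 1 (infinitesimal Nijenhuis part).} Since $(P,J)$ is a Morita equivalence of the Nijenhuis groupoids $(\mathscr{G},N_1)$ and $(\mathscr{H},N_2)$, Theorem \ref{thm:morita-nijenhuis-groupoid-to-algebroid} shows that $(P,J)$, with the actions $\mathscr{a}_1,\mathscr{a}_2$, is a Nijenhuis Morita equivalence between the IM tensor fields $R_1,R_2$ differentiating $N_1,N_2$. By Lemma \ref{lemma:correspondence-morita}(a), $P$ is also an infinitesimal Morita bibundle of Lie algebroids for $A_{\mathscr{G}}$ and $A_{\mathscr{H}}$.

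It then remains to identify $R_i$ with the IM tensor field $(r_i^{\mathrm{tg}},r_i^{\mathrm{cotg}})$ of the Dirac--Nijenhuis structure $(L_i,r_i)$ from Corollary \ref{cor:dirac-compatibility-algebroid}, under the BCWZ isomorphism $A_{\mathscr{G}}\cong L_1$, $\xi\mapsto(\rho(\xi),\sigma_{\omega_1}(\xi))$ with $\sigma_{\omega_1}(\xi)=\mathbf{u}^*\iota_{\overrightarrow{\xi}}\omega_1$. I would obtain this from the compatibility of $\omega_1$ and $N_1$ together with Theorem \ref{thm:symplectic-nijenhuis-integration}. That theorem is proved, following \cite[Theorem 6.3]{bursztyn-drummond-netto2022dirac}, by showing that the IM $1$-derivation $(D,\ell,r)$ of $N_1$ corresponds to $(\mathbb{D}^{r_1}|_{L_1},(r_1,r_1^*),r_1)$ under this isomorphism. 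Transporting the actions through the isomorphism, we get that $(P,J)$ realizes a Morita equivalence of the Dirac--Nijenhuis infinitesimal structures, which gives condition (b).

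\textbf{Step 2 (Dirac part).} We show that $(P,\varpi)$ satisfies Definition \ref{def:morita-equiv-dirac}. Conditions (c) and (e) there are conditions (b) and (a) of the quasi-symplectic Morita equivalence. Completeness of the actions and simple connectedness of the fibers follow from Lemma \ref{lemma:correspondence-morita}(a), since the $\mu_i$-fibers are orbits of free actions of source-simply connected groupoids; here I assume, as implicitly in the statement, that $\mathscr{G}$ and $\mathscr{H}$ are source-simply connected.

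For the orthogonality condition (d) and the forward Dirac property, differentiate condition (c) of the quasi-symplectic Morita equivalence, $\mathscr{A}^*\varpi=\mathrm{pr}_{G_1}^*\omega_1+\mathrm{pr}_P^*\varpi+\mathrm{pr}_{H_1}^*\omega_2$, at identity arrows.
\begin{itemize}
\item Restricting to $G_1\times_{G_0}P$ gives $\iota_{\mathscr{a}_1(\xi)}\varpi=\mu_1^*\sigma_{\omega_1}(\xi)$. This says that $\mathscr{a}_1(\xi)$ is the unique vector $v$ with $\mathrm{d}\mu_1(v)=\rho(\xi)$ and $\varpi^\flat(v)=\mu_1^*\alpha$, i.e.\ the action of Definition \ref{def:morita-equiv-dirac}.
\item The analogous restriction on the $\mathscr{H}$-side gives $\iota_{\mathscr{a}_2(\zeta)}\varpi=-\mu_2^*\sigma_{\omega_2}(\zeta)$; the sign comes from right actions and $\mathbf{i}_{\mathscr{H}}$, and matches $\bar L_2$.
\item Since $\ker\mathrm{d}\mu_2=\operatorname{im}\mathscr{a}_1$, we obtain $\varpi(\mathscr{a}_1\xi,\mathscr{a}_2\zeta)=\mu_1^*\sigma_{\omega_1}(\xi)(\mathscr{a}_2\zeta)=0$, because $\mathscr{a}_2\zeta\in\ker\mathrm{d}\mu_1$. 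This is condition (d).
\end{itemize}
The forward Dirac property of $\mu_1$ then follows from these formulas by a dimension count. They give $L_1\subseteq(\mu_1)_*L_\varpi$, and both sides are Lagrangian, so equality holds. The same argument applies to $\mu_2$ with $\bar L_2$.

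\textbf{Main obstacle.} I expect the hardest part to be the identification in Step 1: the IM tensor differentiated from $N_i$, pulled through the BCWZ isomorphism, must coincide with $(r_i^{\mathrm{tg}},r_i^{\mathrm{cotg}})$, and the Morita condition \eqref{eq:morita-equivalence-linear-tensor} must be transported compatibly. The second point can be handled with Proposition \ref{prop:morita-equiv-1-derivation}. Conditions \eqref{eq:base_1-der_morita}–\eqref{eq:connection_1-der_morita} are phrased in terms of sections and actions, so they are invariant under a Lie algebroid isomorphism covering the identity that intertwines the $1$-derivations. The first point I would settle by checking \eqref{eq:IM-derivation-groupoid} against $\sigma_{\omega_1}$, using $\omega_1^\flat\circ N_1=N_1^*\circ\omega_1^\flat$ at units.
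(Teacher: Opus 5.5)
Your proposal is correct and follows the paper's decomposition. The Nijenhuis part comes from Theorem \ref{thm:morita-nijenhuis-groupoid-to-algebroid}, compatibility of $(\varpi,J)$ is inherited, and the orthogonality condition is obtained as in the paper: you write $\ker\mathrm{d}\mu_1$ and $\ker\mathrm{d}\mu_2$ as images of the infinitesimal actions and use $\iota_{\mathscr{a}_i(\cdot)}\varpi=\pm\mu_i^*\sigma_{\omega_i}(\cdot)$. The paper uses the $\mathscr{H}$-side identity; you use the $\mathscr{G}$-side one.

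The genuine difference is how you get the forward Dirac property. The paper regards $P$ as a Hamiltonian $\mathscr{G}\times\mathscr{H}^{\mathrm{op}}$-space and invokes \cite[Theorem 4.7]{bursztyn-crainic2005dirac} to conclude that $\mu=(\mu_1,\mu_2)$ is forward Dirac into $L_1\times\bar{L}_2$. You instead differentiate $\mathscr{A}^*\varpi=\mathrm{pr}_{G_1}^*\omega_1+\mathrm{pr}_P^*\varpi+\mathrm{pr}_{H_1}^*\omega_2$ along the units and finish with a Lagrangian dimension count. Your route unpacks that theorem in this special case and buys three things:
\begin{itemize}
\item It is self-contained.
\item It gives the forward Dirac property of each $\mu_i$ separately, which is what Definition \ref{def:morita-equiv-dirac}(a) literally requires.
\item It shows directly that the differentiated global actions coincide with the $\varpi$-defined actions of Definition \ref{def:morita-equiv-dirac}. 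The paper needs this for the clause ``infinitesimal actions induced by the global actions'' and for the identity $(\mathrm{d}\mu_2)^*\sigma_{\omega_2}(\zeta)=-\varpi^\flat(u)$, which it attributes loosely to the forward Dirac property.
\end{itemize}
The paper also leaves implicit the identification of $R_i$ with the IM tensor of $(L_i,r_i)$ through $(\rho_i,\sigma_{\omega_i})$, which you make explicit. You are right that simple connectedness of the fibers needs $\mathscr{G}$ and $\mathscr{H}$ to be source-simply connected; the paper's appeal to Lemma \ref{lemma:correspondence-morita}(a) uses this tacitly.

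Two small corrections. First, the vector $v$ with $\mathrm{d}\mu_1(v)=\rho(\xi)$ and $\varpi^\flat(v)=\mu_1^*\alpha$ is unique only once you also impose $\mathrm{d}\mu_2(v)=0$. In the presymplectic setting $\ker\varpi\cap\ker\mathrm{d}\mu_1$ can be nonzero: take the pair groupoid with $\omega=0$ and bibundle $P=M\times M$, $\varpi=0$. This is harmless, since $\mathscr{a}_1(\xi)$ is tangent to the $\mu_2$-fibers anyway. Second, your fallback check at the units identifies only $\ell$ with $(r,r^*)$. The connection component also needs the second compatibility condition $\mathrm{d}(\omega_{N})=(\mathrm{d}\omega)_{N}$, which is exactly what the argument behind Theorem \ref{thm:symplectic-nijenhuis-integration} supplies.
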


\begin{proof}
    By Theorem \ref{thm:morita-nijenhuis-groupoid-to-algebroid}, the associated infinitesimal Nijenhuis structures are Morita equivalent. Furthermore, the compatibility of the pair $(\varpi,J)$ holds by hypothesis. Therefore, it remains only to establish the Morita equivalence of the underlying Dirac structures.

    The quasi-symplectic Morita equivalence yields an action $\mathscr{A}:K_{1}\times_{(\mathbf{s},\mu)}P\to P$ of the product quasi-symplectic groupoid $\mathscr{K}\coloneqq \mathscr{G}\times \mathscr{H}^{\mathrm{op}}$ on the submersion $\mu\coloneqq (\mu_{1}, \mu_{2})$. Here, the $2$-form on $\mathscr{K}$ is given by $\omega\coloneqq \mathrm{pr}_{G_{1}}^{*}\omega_{1}-\mathrm{pr}_{H_{1}}^{*}\omega_{2}$, and the action satisfies the conditions
    \[
        \operatorname{ker}\mathrm{d}\mu \cap \operatorname{ker}\varpi = 0 \qquad \text{and} \qquad \mathscr{A}^{*}\varpi= \mathrm{pr}_{P}^{*}\varpi + \mathrm{pr}_{K_{1}}^{*}\omega.
    \]
    Consequently, $P$ inherits the structure of a \textit{Hamiltonian $\mathscr{K}$-space}. Applying \cite[Theorem 4.7]{bursztyn-crainic2005dirac}, we conclude that $\mu:P\to G_{0}\times H_{0}$ is a forward Dirac map from $L_{\varpi}$ to the product Dirac structure $L_{1}\times \bar{L}_{2}$ induced by $(\mathscr{K},\omega,\mathrm{pr}_{1}^{*}\eta_{1} - \mathrm{pr}_{2}^{*}\eta_{2})$.

    Let $A_{\mathscr{G}}$ and $A_{\mathscr{H}}$ denote the Lie algebroids of $\mathscr{G}$ and $\mathscr{H}$, respectively. These Lie algebroids are isomorphic to their associated Dirac structures via the bundle maps $(\rho_{1},\sigma_{\omega_{1}}):A_{\mathscr{G}}\xrightarrow{\sim} L_{1}$ and $(\rho_{2},\sigma_{\omega_{2}}):A_{\mathscr{H}}\xrightarrow{\sim} L_{2}$. Because the underlying Lie groupoids are Morita equivalent, Lemma \ref{lemma:correspondence-morita} ensures that their corresponding Lie algebroids are also Morita equivalent. By Definition \ref{def:morita-equiv-algebroid}, the associated infinitesimal actions must satisfy $\mathscr{a}_{1}(\mu^{*}_{1}A_{\mathscr{G}})_{p} = \operatorname{ker}\mathrm{d}\mu_{2\, p}$ and $\mathscr{a}_{2}(\mu_{2}^{*}A_{\mathscr{H}})_{p}=\operatorname{ker}\mathrm{d}\mu_{1\, p}$  for every $p \in P$. Therefore, for any tangent vectors $u \in \operatorname{ker}\mathrm{d}\mu_{1\, p} $ and $v \in \operatorname{ker}\mathrm{d} \mu_{2\, p}$ there exist elements $\mu_{1}^{*} \xi \in \mu_{1}^{*}A_{\mathscr{G}}$ and $\mu_{2}^{*}\zeta \in \mu_{2}^{*} A_{\mathscr{H}}$ such that $\mathscr{a}_{2}(\mu_{2}^{*}\zeta)=u$ and $\mathscr{a}_{1}(\mu_{1}^{*}\xi) =v$. Because $\mu_{2}$ is a forward Dirac map, it follows that $(\mathrm{d}\mu_{2})^{*}(\sigma_{\omega_{2}}(\zeta)) = - \varpi^{\flat}(u)$. Evaluating $\varpi$ on the pair $(u,v)$ thus yields
    \[
      \varpi(u,v) = \varpi^{\flat}(u)(v) = - (\mathrm{d}\mu_{2})^{*} \left( \sigma_{\omega_{2}}(\zeta) \right) (v) = -\sigma_{\omega_{2}}(\zeta)(\mathrm{d}\mu_{2}(v)) = 0,
    \]
    where the final equality holds because $v\in \operatorname{ker}\mathrm{d}\mu_{2\, p}$. This confirms that the orthogonality condition \ref{item:orthogonal} of Definition \ref{def:morita-equiv-dirac} is satisfied. Because the remaining conditions hold by hypothesis, the Morita equivalence of the Dirac structures is established.
    
\end{proof}
By definition, a Morita equivalence of Dirac-Nijenhuis structures requires the actions of the associated Lie algebroids to be complete. Consequently, the Dirac manifolds are integrable, which allows us to establish the converse of Theorem \ref{thm:symp-nij-morita-dirac-nij}.
\begin{theorem}
  \label{thm:morita-diracnij-to-sympnij}
  Let $\mathscr{G} = (G_1 \rightrightarrows G_0)$ and $\mathscr{H} = (H_1 \rightrightarrows H_0)$ be source-simply connected Lie groupoids. Suppose that the base spaces $G_0$ and $H_0$ admit Dirac--Nijenhuis structures $(G_0, L_1, r_1)$ and $(H_0, L_2, r_2)$, respectively, where $L_1$ is $\eta_1$-twisted and $L_2$ is $\eta_2$-twisted. Assume further that these structures are Morita equivalent via an infinitesimal Morita $(L_1, L_2)$-bibundle $(P, \varpi, J)$:
  \[
    (G_0, L_1, r_1) \xleftarrow{\quad \mu_1 \quad} (P, \varpi, J) \xrightarrow{\quad \mu_2 \quad} (H_0, L_2, r_2).
  \]
  Then the corresponding quasi-symplectic--Nijenhuis groupoids $(\mathscr{G}, \omega_1, \eta_1, N_1)$ and $(\mathscr{H}, \omega_2, \eta_2, N_2)$ are Morita equivalent. In particular, $(P, \varpi, J)$ serves as the Morita $(\mathscr{G}, \mathscr{H})$-bibundle, equipped with global actions that integrate the infinitesimal actions.
\end{theorem}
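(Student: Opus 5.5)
The plan is to split the statement into its three components, following Definition \ref{def:morita-equivalence-symplectic-nijenhuis}: the Nijenhuis part, the quasi-symplectic part, and the compatibility of $(\varpi,J)$. The compatibility is a hypothesis, because $(\varpi,J)$ is assumed to be a compatible pair in Definition \ref{def:morita-dirac-nijenhuis}. So the work lies in the first two parts.

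First, integrate the structures on the groupoids. By \cite[Theorems 2.2, 2.4]{BCWZ2004integration}, the source-simply connected groupoids carry quasi-symplectic forms $\omega_1,\omega_2$ integrating $L_1,L_2$, with $A_{\mathscr{G}}\cong L_1$ and $A_{\mathscr{H}}\cong L_2$. By Corollary \ref{cor:dirac-compatibility-algebroid}, each $r_i$ induces an infinitesimal Nijenhuis structure $(r_i^{\mathrm{tg}},r_i^{\mathrm{cotg}})$ on $L_i$. By Lemma \ref{lemma:tensor fields-correspondence-nijenhuis}, this integrates to a multiplicative Nijenhuis tensor $N_i$. Theorem \ref{thm:symplectic-nijenhuis-integration} (source-connected case) then makes $(\omega_i,N_i)$ compatible, so we have quasi-symplectic--Nijenhuis groupoids.

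Next, the Nijenhuis part. By Proposition \ref{prop:morita-dirac-algebroid-same}, $P$ is an infinitesimal Morita bibundle of Lie algebroids. By Definition \ref{def:morita-dirac-nijenhuis}(b), $J$ realizes a Morita equivalence of the infinitesimal Nijenhuis structures. Theorem \ref{thm:morita-nijenhuis-algebroid-to-groupoid} then says that $P$, with the integrated global actions (Lemma \ref{lemma:correspondence-morita}), is a Morita $(\mathscr{G},\mathscr{H})$-bibundle and that $(P,J)$ is a Morita equivalence of Nijenhuis groupoids between $N_1$ and $N_2$.

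The main obstacle is the quasi-symplectic part. Conditions (a) $\mathrm{d}\varpi=\mu_2^*\eta_2-\mu_1^*\eta_1$ and (b) $\ker\varpi\cap\ker\mathrm{d}\mu_1\cap\ker\mathrm{d}\mu_2=0$ are immediate from Definition \ref{def:morita-equiv-dirac}. The issue is condition (c), $\mathscr{A}^*\varpi=\mathrm{pr}_{G_1}^*\omega_1+\mathrm{pr}_P^*\varpi+\mathrm{pr}_{H_1}^*\omega_2$. I would view $P$ as carrying an action of $\mathscr{K}=\mathscr{G}\times\mathscr{H}^{\mathrm{op}}$, the source-simply connected integration of $L_1\times\bar L_2$, with moment map $\mu=(\mu_1,\mu_2)$. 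This $\mu$ is a forward Dirac map from $L_\varpi$ to $L_1\times\bar L_2$ (using Definition \ref{def:morita-equiv-dirac}(a) together with the orthogonality condition (d)), and the infinitesimal action is the one given in \eqref{eq:action-dirac-morita}.

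I would then invoke the integration theorem for Hamiltonian spaces, the converse direction of \cite[Theorem 4.7]{bursztyn-crainic2005dirac}. It says that a presymplectic realization that is forward Dirac, with complete infinitesimal action, integrates to a Hamiltonian $\mathscr{K}$-space satisfying $\mathscr{A}^*\varpi=\mathrm{pr}_P^*\varpi+\mathrm{pr}_{K_1}^*\omega$.

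If a direct argument is needed, the approach is to show that $\Theta:=\mathscr{A}^*\varpi-\mathrm{pr}_P^*\varpi-\mathrm{pr}_{K_1}^*\omega$ vanishes. First, $\Theta$ is zero along the unit section, by the infinitesimal moment condition $\iota_{\mathscr{a}(\xi)}\varpi=\mu^*\sigma(\xi)$. Second, $\Theta$ is invariant along the flows generated by right-invariant vector fields, by multiplicativity of $\omega$ and the twisting relations. Since the $\mathbf{s}$-fibers are connected, $\Theta=0$. The sign bookkeeping for $\mathscr{H}^{\mathrm{op}}$ and $\bar L_2$ must be tracked carefully, since it turns $-\omega_2$ on $\mathscr{H}^{\mathrm{op}}$ into $+\omega_2$ in condition (c).

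Combining this with the Nijenhuis part and the compatibility hypothesis gives all conditions of Definition \ref{def:morita-equivalence-symplectic-nijenhuis}.
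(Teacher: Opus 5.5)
Your proposal is correct and follows essentially the same route as the paper. The paper also gets the Nijenhuis part from Theorem \ref{thm:morita-nijenhuis-algebroid-to-groupoid} and takes compatibility of $(\varpi,J)$ as a hypothesis. It obtains condition (c) by applying \cite[Theorem 4.7]{bursztyn-crainic2005dirac} to $\mathscr{K}=\mathscr{G}\times\mathscr{H}^{\mathrm{op}}$ and flipping the sign of $\omega_2$ when converting the left $\mathscr{H}^{\mathrm{op}}$-action into a right $\mathscr{H}$-action. The one hypothesis of that theorem you leave unchecked is completeness of the total action of $L_1\times\bar{L}_2$. The paper verifies it by noting that $\mathscr{a}_1(\xi)+\mathscr{a}_2(\zeta)$ is a sum of commuting complete vector fields; it also follows directly from the global actions you already obtained via Lemma \ref{lemma:correspondence-morita}.
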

\begin{proof}
    Theorem \ref{thm:morita-nijenhuis-algebroid-to-groupoid} guarantees that the underlying Morita equivalence of the infinitesimal Nijenhuis structures integrates to yield Morita equivalent Nijenhuis groupoids.
    
    Furthermore, the compatibility of the pair $(\varpi,J)$ holds by hypothesis. Because all other requisites for quasi-symplectic Morita equivalence are already established, it remains only to prove that the total infinitesimal action integrates to a global action $\mathscr{A}$ satisfying
    \[
      \mathscr{A}^{*}\varpi = \mathrm{pr}_{P}^{*}\varpi + \mathrm{pr}_{G_{1}}^{*}\omega_{1} + \mathrm{pr}_{H_{1}}^{*}\omega_{2}.
    \]
        Observe that the forward Dirac map $\mu:P\to G_{0}\times H_{0}$ from $L_{\varpi}$ to $L_{1}\times \bar{L}_{2}$ is a \textit{complete Dirac realization}; that is, the infinitesimal action of $L_{1}\times \bar{L}_{2}$ on $P$ is complete. To verify this, note that because the individual actions $\mathscr{a}_{1}$ and $\mathscr{a}_{2}$ are complete, the induced vector fields $\mathscr{a}_{1}(\xi)$ and $\mathscr{a}_{2}(\zeta)$ are complete for any compactly supported sections $\xi\in \Gamma(L_{1})$ and $\zeta\in \Gamma(L_{2})$. Since these vector fields commute, their sum $\mathscr{a}_{1}(\xi)+\mathscr{a}_{2}(\zeta)$ is also complete, which establishes the completeness of the total infinitesimal action.

        Applying \cite[Theorem 4.7]{bursztyn-crainic2005dirac}, this complete Dirac realization integrates to a global left action $\widetilde{\mathscr{A}}$ of the product groupoid $\mathscr{K}=\mathscr{G}\times \mathscr{H}^{\mathrm{op}}$ on the submersion $\mu=(\mu_{1},\mu_{2})$ satisfying $\widetilde{\mathscr{A}}^{*}\varpi = \mathrm{pr}_{P}^{*}\varpi + \mathrm{pr}_{K_{1}}^{*}\omega$, where $\omega \in \Omega^{2}(\mathscr{K})$ is the multiplicative $2$-form integrating $L_{1}\times \bar{L}_{2}$, given explicitly by $\omega = \mathrm{pr}_{1}^{*}\omega_{1} - \mathrm{pr}_{2}^{*}\omega_{2}$. Applying the projection identities $\mathrm{pr}_{G_{1}}\circ \mathrm{pr}_{K_{1}}=\mathrm{pr}_{G_{1}}$ and $\mathrm{pr}_{H_{1}}\circ \mathrm{pr}_{K_{1}}=\mathrm{pr}_{H_{1}}$, the pullback condition expands to $\widetilde{\mathscr{A}}^{*}\varpi = \mathrm{pr}_{P}^{*}\varpi + \mathrm{pr}_{G_{1}}^{*}\omega_{1} - \mathrm{pr}_{H_{1}}^{*}\omega_{2}$. 

            The existence of this left $\mathscr{K}$-action $\widetilde{\mathscr{A}}$ corresponds precisely to equipping $P$ with commuting left Hamiltonian $\mathscr{G}$- and right Hamiltonian $\mathscr{H}$-actions. Let $\mathscr{A}$ denote the combination of this left $\mathscr{G}$-action and right $\mathscr{H}$-action. By reversing the sign of $\omega_{2}$---as consequence of converting the left $\mathscr{H}^{\mathrm{op}}$-action into a right $\mathscr{H}$-action---the combined action satisfies the desired compatibility condition $\mathscr{A}^{*}\varpi = \mathrm{pr}_{P}^{*}\varpi + \mathrm{pr}_{G_{1}}^{*}\omega_{1} + \mathrm{pr}_{H_{1}}^{*}\omega_{2}$. This concludes the proof.
\end{proof}
\begin{corollary}
    \label{cor:PN-SN}
    There is a one-to-one correspondence between Morita equivalences of Poisson-Nijenhuis structures and those of source-simply connected symplectic-Nijenhuis groupoids. In particular, this restricts to a correspondence between Morita equivalences of holomorphic Poisson structures and source-simply connected holomorphic symplectic groupoids.
\end{corollary}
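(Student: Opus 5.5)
The corollary follows by specializing Theorems \ref{thm:symp-nij-morita-dirac-nij} and \ref{thm:morita-diracnij-to-sympnij} to the case $\eta_{1}=\eta_{2}=0$ with $L_{i}=\mathrm{Graph}(\pi_{i}^{\sharp})$. The plan has three parts: identify the objects on each side, compose the two theorems, and specialize further to holomorphic structures.

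\emph{Identification of objects.} By Example \ref{eg:poisson-nijenhuis}, a Poisson--Nijenhuis manifold $(M,\pi,r)$ is the same as a Dirac--Nijenhuis structure $(\mathrm{Graph}(\pi^{\sharp}),r)$ with zero twist. By \cite{BCWZ2004integration} together with Theorem \ref{thm:symplectic-nijenhuis-integration}, the source-simply connected integration of such a structure is a quasi-symplectic--Nijenhuis groupoid $(\mathscr{G},\omega,0,N)$ with $\omega$ symplectic. Non-degeneracy of $\omega$ holds exactly because $L$ is a graph, which is the standard correspondence between Poisson structures and symplectic groupoids. Thus source-simply connected symplectic--Nijenhuis groupoids correspond bijectively to integrable Poisson--Nijenhuis manifolds.

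\emph{Morita equivalences.} On the Poisson side I use Example \ref{eg:poisson-nijenhuis-morita}: a Morita equivalence of Dirac--Nijenhuis structures between graphs of Poisson bivectors is a bibundle $(P,\varpi,J)$. On the groupoid side I use Definition \ref{def:morita-equivalence-symplectic-nijenhuis} with $\eta_{i}=0$.

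\begin{itemize}
\item \emph{Global to infinitesimal.} Theorem \ref{thm:symp-nij-morita-dirac-nij} sends a Morita equivalence of the groupoids to a Morita equivalence of the Poisson--Nijenhuis structures with the same $(P,\varpi,J)$.
\item \emph{Infinitesimal to global.} Theorem \ref{thm:morita-diracnij-to-sympnij} integrates such an equivalence to one of the source-simply connected groupoids, again with the same $(P,\varpi,J)$.
\end{itemize}

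Since both constructions keep $(P,\varpi,J)$ and only pass between the global actions and their infinitesimal counterparts, they are mutually inverse. The identification of actions is handled by Lemma \ref{lemma:correspondence-morita} and the uniqueness of integration of complete actions \eqref{eq:integration-action}. The tensor part is covered by Corollary \ref{cor:corresp-morita-nijenhuis}.

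\emph{Main obstacle.} The delicate point is non-degeneracy of $\varpi$. For symplectic groupoids one expects $\varpi$ to be symplectic, while the definitions only impose $\ker\varpi\cap\ker\mathrm{d}\mu_{1}\cap\ker\mathrm{d}\mu_{2}=0$.

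To close this gap I would use a dimension count. Let $n_{i}=\dim M_{i}$. The action conditions \eqref{eq:algebroid-morita-fibers} give $\dim\ker\mathrm{d}\mu_{2}=n_{1}$ and $\dim\ker\mathrm{d}\mu_{1}=n_{2}$. Together with surjectivity of $\mu_{2}$ this yields $\dim P=n_{1}+n_{2}$.

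Next take $u\in\ker\varpi$. The forward Dirac condition for $\mu_{1}$ gives $(\mathrm{d}\mu_{1}u,0)\in L_{1}$, so $\mathrm{d}\mu_{1}u=\pi_{1}^{\sharp}(0)=0$, since $L_{1}$ is a graph. The same argument applied to $\mu_{2}$ gives $\mathrm{d}\mu_{2}u=0$. Condition (c) then forces $u=0$, so $\varpi$ is symplectic.

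This recovers Xu's Morita equivalence of symplectic groupoids and Poisson manifolds (Example \ref{eg:poisson-morita}).

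\emph{Holomorphic case.} Take $r_{i}$ and $J$ to be complex structures, using Examples \ref{eg:holomorphic-symplectic-groupoid}, \ref{eg:holomorphic-morita-symplectic} and \ref{eg:holomorphic-dirac-morita}. It remains to check that the condition $J^{2}=-\mathrm{id}$ and its multiplicative counterparts $N_{i}^{2}=-\mathrm{id}$ correspond under the Lie functor. This follows from uniqueness of integration in Lemma \ref{lemma:tensor fields-correspondence-nijenhuis}, because $R^{2}=-\mathrm{id}$ integrates to $N^{2}=-\mathrm{id}$.
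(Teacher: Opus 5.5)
Your proposal is correct and takes essentially the paper's route: the corollary is obtained by specializing Theorems \ref{thm:symp-nij-morita-dirac-nij} and \ref{thm:morita-diracnij-to-sympnij} to untwisted graph Dirac structures, with the holomorphic case read off from Examples \ref{eg:holomorphic-symplectic-groupoid}, \ref{eg:holomorphic-morita-symplectic} and \ref{eg:holomorphic-dirac-morita}. Your forward-Dirac argument that $\varpi$ is automatically symplectic when $L_{1},L_{2}$ are graphs is a correct detail that the paper leaves implicit; the dimension count you add before it is not needed for that conclusion.
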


\subsection{Remarks on hierarchies of Morita equivalent Dirac-Nijenhuis structures}
 An important feature of Poisson-Nijenhuis manifolds is the fact that they produce a hierarchy of compatible geometric structures. In this subsection, we explore the compatibility between Morita equivalence and these hierarchies generated by Nijenhuis tensor fields. This is primarily theoretical: while we plan to find and study concrete examples of Morita equivalent hierarchies in subsequent work, demonstrating that under certain conditions the Morita equivalence persists through the hierarchy serves, at this point, as evidence for the soundness of the construction.

Recall that for a given Poisson-Nijenhuis manifold $(M,\pi,r)$, the hierarchy of compatible structures is a sequence of Poisson bivectors $\{ \pi^{(n)} \}_{n\in \mathbb{N}} $, defined recursively by
\begin{equation}
  \label{eq:hierarchy-poisson}
  \big(\pi^{(n)}\big)^{\sharp}\coloneqq r\circ \big(\pi^{(n-1)}\big)^{\sharp}= r^{n}\circ\pi^{\sharp}.
\end{equation}
This construction admits a generalization to the setting of Dirac-Nijenhuis structures \cite{bursztyn-drummond-netto2022dirac}. Let $(L,r)$ be a Dirac-Nijenhuis structure on a manifold $M$. For every $n\in \mathbb{N}$, define:
\begin{align}
  & L^{(n,0)}\coloneqq \left( r^{n},\mathrm{id}_{T^{*}M} \right)(L)\qquad \text{and} \label{hierarchy-dirac-1}\\
  & L^{(0,n)}\coloneqq \left( \mathrm{id}_{TM}, (r^{*})^{n} \right)(L). \label{hierarchy-dirac-2}
\end{align}
According to \cite[Proposition 4.8]{bursztyn-drummond-netto2022dirac}, if $\operatorname{ker}(r, \mathrm{id}_{T^{*}M})|_{L}=0$, then $(L^{(n,0)},r)$ is a Dirac-Nijenhuis structure for every $n\in \mathbb{N}$. Similarly, if $\operatorname{ker}(\mathrm{id}_{TM}, r^{*})|_{L}=0$ holds, then $(L^{(0,n)},r)$ is a Dirac-Nijenhuis structure.

We now turn to the Morita equivalence of these hierarchies. Note that given a Morita equivalence $(M_{1},L_{1},r_{1})\gets (P,\varpi,J)\to (M_{2},L_{2},r_{2})$, the Dirac structures $L_{\varpi}^{(0,n)}$ are graphs of $2$-forms; hence, they are natural candidates for bibundles between $L_{1}^{(0,n)}$ and $L_{2}^{(0,n)}$. In contrast, the structures $L_{\varpi}^{(n,0)}$ are not generally graphs of $2$-forms, so the hierarchies $L_{1}^{(n,0)}$ and $L_{2}^{(n,0)}$ given by \eqref{hierarchy-dirac-1} require separate analysis. Poisson-Nijenhuis structures yield hierarchies of the form \eqref{hierarchy-dirac-1}, so the investigation of Morita equivalence in this context requires extra refinement. Thus, we proceed first to study the Morita equivalence of Dirac structures defined by the recursion \eqref{hierarchy-dirac-2}. It turns out that, in this setting, the corresponding hierarchies are indeed Morita equivalent. Subsequently, we examine hierarchies of the form \eqref{hierarchy-dirac-1} specifically for the case of Poisson-Nijenhuis structures.
\begin{proposition}
  \label{prop:hierarchy-morita}
  Let $(M_{1},L_{1},r_{1})$ and $(M_{2},L_{2},r_{2})$ be Morita equivalent Dirac-Nijenhuis structures. Suppose that $\operatorname{ker}\left(\mathrm{id}_{TM_{i}}, r_{i}^{*}\right)|_{L_{i}}=0$ for $i=1,2$. Then the Dirac-Nijenhuis structures $\big(M_{1}, L_{1}^{(0,n)},r_{1}\big)$ and $\big(M_{2}, L_{2}^{(0,n)},r_{2}\big)$ are Morita equivalent for every $n\in \mathbb{N}$.
\end{proposition}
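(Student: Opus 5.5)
The natural candidate bibundle is the same $P$ with the same surjective submersions $\mu_1,\mu_2$ and the same tensor field $J$, but with the $2$-form replaced by
\[
\varpi^{(n)} \coloneqq \varpi_{J^{n}}, \qquad \text{i.e.}\qquad (\varpi^{(n)})^{\flat} = (J^{*})^{n}\circ \varpi^{\flat} = \varpi^{\flat}\circ J^{n},
\]
so that $L_{\varpi}^{(0,n)} = \operatorname{Graph}(\varpi^{(n)})$. Skew-symmetry of $\varpi^{(n)}$ follows from the first compatibility condition \eqref{eq:compatible-2-form-1}, applied repeatedly. The plan is to verify, one at a time, the conditions of Definition \ref{def:morita-dirac-nijenhuis} for the data $(P,\varpi^{(n)},J)$ relative to $(L_1^{(0,n)},r_1)$ and $(L_2^{(0,n)},r_2)$. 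Twisting should be handled the same way as for $\varpi$: the twisting forms of the hierarchy are $(\eta_i)$ deformed by $r_i$, and I expect $\mathrm{d}\varpi^{(n)}=(\mathrm{d}\varpi)_{J^n}$ to match them. Iterating the second compatibility condition \eqref{eq:compatible-2-form-2}, which holds for the pair $(\varpi_{J^k},J)$ by the Nijenhuis property as in the standard Magri–Morosi hierarchy argument, gives this. By \cite[Proposition 4.8]{bursztyn-drummond-netto2022dirac}, each $(L_i^{(0,n)},r_i)$ is Dirac–Nijenhuis, and $(\varpi^{(n)},J)$ is again a compatible pair by the same iteration.

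\textbf{The Dirac bibundle conditions.} Condition (e) is the twisted-closedness just discussed. For the forward Dirac property of $\mu_1$, take $(v,\varpi^{\flat}(J^n v))$ in $L_\varpi^{(0,n)}$. Since $\mathrm{d}\mu_1(v)=\tilde v$ and $\varpi^{\flat}(v)=\mu_1^*\alpha$ along $\ker\mathrm{d}\mu_2$-directions for $(\tilde v,\alpha)\in L_1$, applying $(J^*)^n$ and using $\mathrm{d}\mu_1\circ J=r_1\circ\mathrm{d}\mu_1$ (dually $J^*\mu_1^*=\mu_1^*r_1^*$) gives $(J^*)^n\mu_1^*\alpha=\mu_1^*(r_1^*)^n\alpha$. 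This shows that the pushforward of $L^{(0,n)}_\varpi$ equals $(\mathrm{id},(r_1^*)^n)(L_1)=L_1^{(0,n)}$, and the same argument works for $\mu_2$. Orthogonality (d) follows from $\varpi^{(n)}(u,v)=\varpi(J^nu,v)$ together with the fact that $J$ preserves $\ker\mathrm{d}\mu_1$ (since $J$ is $\mu_1$-related to $r_1$). Simple connectedness of the fibers is unchanged.

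\textbf{The main obstacle.} The hardest part is the nondegeneracy (c), $\ker\varpi^{(n)}\cap\ker\mathrm{d}\mu_1\cap\ker\mathrm{d}\mu_2=0$, together with completeness (b) of the new actions. Both are where the hypothesis $\ker(\mathrm{id},r_i^*)|_{L_i}=0$ enters. My plan is to show that the new actions $\mathscr{a}_i^{(n)}$ of $L_i^{(0,n)}$ factor through the old ones via the isomorphism $(\mathrm{id},(r_i^*)^n)\colon L_i\to L_i^{(0,n)}$. This map is injective by the hypothesis, hence a bundle isomorphism, and I expect it to be a Lie algebroid isomorphism by \cite{bursztyn-drummond-netto2022dirac}. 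Concretely, I would check that $\mathscr{a}^{(n)}_i\circ(\mathrm{id},(r_i^*)^n)=\mathscr{a}_i$ using the computation in the previous paragraph. Completeness, injectivity, the equalities $\operatorname{im}=\ker\mathrm{d}\mu_j$, and commutativity then transfer directly, which gives (b) and the Lie algebroid bibundle structure. For (c), take $v$ in the triple intersection. Since $v\in\ker\mathrm{d}\mu_1\cap\ker\mathrm{d}\mu_2$, it lies in the image of both actions, and the pair $\bigl(0,(r_i^*)^n\alpha_i\bigr)$ vanishing should force $\alpha_i=0$ by the kernel hypothesis. From this I would conclude $v\in\ker\varpi$, and hence $v=0$. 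I expect the bookkeeping here to be the delicate part.

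\textbf{The Nijenhuis part.} Finally, I need condition (b) of Definition \ref{def:morita-dirac-nijenhuis}, namely that $J$ gives a Morita equivalence of the infinitesimal Nijenhuis structures $(\mathbb{D}^{r_i}|_{L_i^{(0,n)}},(r_i,r_i^*),r_i)$. I would verify the three equations of Definition \ref{def:morita-equiv-1-derivations} via Proposition \ref{prop:morita-equiv-1-derivation}. The map $(\mathrm{id},(r_i^*)^n)$ commutes with $(r_i,r_i^*)$ and intertwines $\mathbb{D}^{r_i}$, by the Nijenhuis identity for $\nabla^{r,*}$ applied to $r^*$-powers. Because of this, all three equations reduce, through the factorization of the actions above, to the corresponding equations for $L_i$, which hold by assumption.
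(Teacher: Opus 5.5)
Your proposal is correct and follows the paper's route. It uses the same bibundle $(P,\varpi^{(n)},J)$ with $(\varpi^{(n)})^{\flat}=\varpi^{\flat}\circ J^{n}$, the same forward-Dirac computation via compatibility and $\mu_i$-relatedness, and the same key factorization $\mathscr{a}_i^{(n)}\circ(\mathrm{id},(r_i^{*})^{n})=\mathscr{a}_i$, through which all the bibundle and $1$-derivation conditions transfer. The differences are in presentation: the paper argues by induction on $n$ and calls the transferred conditions immediate. You instead treat all $n$ at once (for $n>1$, injectivity of $(\mathrm{id},(r_i^{*})^{n})|_{L_i}$ follows from the hypothesis together with $(r_i,r_i^{*})(L_i)\subseteq L_i$), and you explicitly check nondegeneracy (c), where the kernel hypothesis actually enters, and orthogonality (d).
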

\begin{proof}
We claim that if $(M_{1},L_{1},r_{1})\xleftarrow{\ \mu_{1}\ } (P,\varpi,J)\xrightarrow{\ \mu_{2}\ } (M_{2},L_{2},r_{2})$ is a Morita equivalence of Dirac-Nijenhuis structures, then 
  \[
    \left(M_{1},L_{1}^{(0,n)},r_{1}\right)\xleftarrow{\quad \mu_{1}\quad }\big(P,\varpi^{(n)},J\big)\xrightarrow{\quad \mu_{2}\quad }\left(M_{2},L_{2}^{(0,n)},r_{2}\right)
  \]
  is a Morita equivalence of the Dirac-Nijenhuis structures $\big(M_{i},L_{i}^{(0,n)},r_{i}\big)$, where $\big(\varpi^{(n)}\big)^{\flat}\coloneqq \varpi^{\flat}\circ J^{n}$.

    We proceed by induction on $n$. For the base case $n=1$, we first show that the maps $\mu_{i}\colon\left( P,L_{\varpi^{(1)}} \right)\to \big(M_{i},L_{i}^{(0,1)}\big)$ are forward Dirac maps. Because $\mu_i \colon (P, L_\varpi) \to (M_i, L_i)$ is already a forward Dirac map, elements in the fiber $(L_i)_{\mu_i(p)}$ take the form $\zeta = (\mathrm{d}\mu_i(v), \alpha)$, where $v \in T_p P$ satisfies $\varpi^{\flat}(v) = (\mathrm{d}\mu_i)^*\alpha$. By definition, elements in $L_i^{(0,1)}$ are of the form $(\mathrm{d}\mu_i(v), r_i^*\alpha)$. Because $(\varpi,J)$ is a compatible pair, we have $\big(\varpi^{(1)}\big)^{\flat} = \varpi^{\flat} \circ J = J^* \varpi^{\flat}$. Furthermore, the assumption that $J$ is $\mu_i$-related to $r_i$ implies $J^{*}\circ \mathrm{d}\mu_i^{*} = \mathrm{d}\mu_i^{*}\circ r_{i}^{*}$. Consequently, we obtain
  \[
    \big(\varpi^{(1)}\big)^{\flat}(v) = J^*\varpi^{\flat}(v) = J^*(\mathrm{d}\mu_i)^*\alpha = (\mathrm{d}\mu_i)^*(r_i^*\alpha).
  \]
  This establishes that $\mu_i \colon (P, L_{\varpi^{(1)}}) \to \big(M_i, L_i^{(0,1)}\big)$ is a forward Dirac map.

    Second, we verify that the actions induced by these forward Dirac maps define an infinitesimal Morita bibundle. Denote by $\mathscr{a}_{i}\colon \Gamma(L_{i})\to \mathfrak{X}(P)$ the actions on $P$ induced by the Dirac structures $L_{i}$. For $i \in \{1,2\}$, the corresponding actions induced by the structures $L_{i}^{(0,1)}$, denoted by $\mathscr{a}_{i}^{(0,1)}\colon \Gamma\big(L_{i}^{(0,1)}\big)\to \mathfrak{X}(P)$, map sections of the form $\xi=(\mathrm{id}_{TM_{i}}, r_{i}^{*})(\zeta) \in \Gamma\big(L_{i}^{(0,1)}\big)$---where $\zeta=(\mathrm{d}\mu_{i}(v),\alpha) \in \Gamma(L_{i})$---to the vector field $v$. In other words, we have
  \begin{equation}
    \label{eq:actions-hierarchy-1}
    \mathscr{a}_{i}^{(0,1)}(\xi) = \mathscr{a}_{i}(\zeta).
  \end{equation}
  
  Since the images of these actions coincide exactly with the vector fields of the original infinitesimal bibundle, it follows immediately that they satisfy all conditions for the Morita equivalence of both Dirac structures (Definition \ref{def:morita-equiv-dirac}) and infinitesimal Nijenhuis structures (Definition \ref{def:morita-equivalence-linear-tensor}). Furthermore, because the $2$-form $\varpi^{(1)}$ is compatible with $J$, we have successfully constructed a Dirac--Nijenhuis Morita equivalence for the base case $n=1$.

    Now, assume inductively that the result holds for a given integer $k \ge 1$; namely, that the structures $\big(M_{1}, L_{1}^{(0,k)},r_{1}\big)$ and $\big(M_{2}, L_{2}^{(0,k)},r_{2}\big)$ are Morita equivalent via the infinitesimal bibundle $\big(P,\varpi^{(k)},J\big)$. Applying a computation identical to that of the base case, we obtain
  \[
    \big(\varpi^{(k+1)}\big)^{\flat}(v) = (\mathrm{d}\mu_{i})^{*}\big((r_{i}^{*})^{k+1}\alpha\big)
  \]
  for $i \in \{1,2\}$. This identity implies that the maps $\mu_{i}\colon \left( P,L_{\varpi^{(k+1)}} \right)\to \big( M_{i},L_{i}^{(0,k+1)} \big)$ are forward Dirac maps, and that the actions of the Dirac structures $L_{i}^{(0,k+1)}$ satisfy
  \begin{equation}
    \label{eq:actions-hierarchy-k1}
    \mathscr{a}_{i}^{(0,k+1)}\big( (\mathrm{id}_{TM_{i}}, (r_{i}^{*})^{k+1})(\zeta) \big) = \mathscr{a}_{i}(\zeta).
  \end{equation}
  Because the images of these actions coincide with those of the original structures, and since $\varpi^{(k+1)}$ remains compatible with $J$ by construction, it follows that $M_{1}\xleftarrow{\ \mu_{1}\ }P\xrightarrow{\ \mu_{2}\ }M_{2}$ defines a Morita equivalence for the case $k+1$. This completes the induction and finishes the proof.
\end{proof}

\begin{proposition}
  \label{prop:hierarchy-poisson-nij}
  Let $(M_{1},\pi_{1},r_{1})$ and $(M_{2},\pi_{2},r_{2})$ be Morita equivalent Poisson--Nijenhuis structures via a bibundle
  \[
    (M_{1},\pi_{1},r_{1})\xleftarrow{\quad \mu_{1}\quad } (P,\varpi, J)\xrightarrow{\quad \mu_{2}\quad } (M_{2},\pi_{2},r_{2}).
  \]
  If $J \in \Omega^{1}(P,TP)$ is an invertible Nijenhuis $(1,1)$-tensor field, then the Poisson--Nijenhuis structures $(M_{1},\pi_{1}^{(n)},r_1)$ and $(M_{2},\pi_{2}^{(n)},r_2)$ are (weakly) Morita equivalent for all $n \in \mathbb{N}$. 
\end{proposition}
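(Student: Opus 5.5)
The plan is to mirror the proof of Proposition \ref{prop:hierarchy-morita}, but with the symplectic form replaced by a form built from the inverse of $J$. Set $\big(\varpi^{(n)}\big)^{\flat}\coloneqq \varpi^{\flat}\circ J^{-n}$. The candidate bibundle for $\pi_{1}^{(n)}$ and $\pi_{2}^{(n)}$ is then $(P,\varpi^{(n)},J)$, with the same maps $\mu_{1},\mu_{2}$.

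First, $\varpi^{(n)}$ is a well-defined $2$-form. Compatibility gives $\varpi^{\flat}\circ J=J^{*}\circ\varpi^{\flat}$. Hence $\varpi^{\flat}\circ J^{-n}=(J^{*})^{-n}\circ \varpi^{\flat}$, and skew-symmetry follows. Since $\varpi$ is symplectic and $J$ is invertible, $\varpi^{(n)}$ is non-degenerate. Its Poisson bivector is $\pi_{\varpi^{(n)}}^{\sharp}=J^{n}\circ\pi_{\varpi}^{\sharp}$.

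Closedness is the main obstacle. I would use the standard hierarchy argument for symplectic–Nijenhuis pairs: if $(\varpi,J)$ is compatible and $J$ is Nijenhuis, then $\varpi_{J^{k}}$ is closed for all $k\ge 0$. Equivalently, $(P,\pi_{\varpi},J)$ is Poisson–Nijenhuis, so each $J^{k}\pi_{\varpi}$ is Poisson. For negative powers, $J^{-1}$ is again Nijenhuis, because the torsion of an invertible $N$ satisfies $\mathscr{N}_{N^{-1}}(u,v)=N^{-2}\mathscr{N}_{N}(N^{-1}u,N^{-1}v)$. The pair $(\varpi,J^{-1})$ is also compatible. The algebraic condition is immediate. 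For the differential condition \eqref{eq:compatible-2-form-2}, I would express it through the vanishing of the Magri–Morosi concomitant $R^{J}_{\pi_{\varpi}}$ and check that invertibility transfers this vanishing to $J^{-1}$. Once that is done, the closedness of $\varpi_{J^{-k}}$ follows, and $J$ stays compatible with $\varpi^{(n)}$ since powers of $J$ commute.

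Next I would verify the Poisson-map condition. From $\mathrm{d}\mu_{i}\circ J=r_{i}\circ\mathrm{d}\mu_{i}$ and the fact that $\mu_{i}$ is Poisson,
\[
\mathrm{d}\mu_{i}\circ J^{n}\circ\pi_{\varpi}^{\sharp}\circ\mu_{i}^{*}=r_{i}^{n}\circ\mathrm{d}\mu_{i}\circ\pi_{\varpi}^{\sharp}\circ\mu_{i}^{*}=\pm r_{i}^{n}\pi_{i}^{\sharp}=\pm\big(\pi_{i}^{(n)}\big)^{\sharp}.
\]
So $\mu_{1}$ and $\mu_{2}$ are Poisson maps to $\pi_{1}^{(n)}$ and $-\pi_{2}^{(n)}$, respectively.

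Then I would check the remaining conditions. The fibers and the orthogonality condition need care: $\varpi^{(n)}(\ker\mathrm{d}\mu_{1},\ker\mathrm{d}\mu_{2})=\varpi(J^{-n}\ker\mathrm{d}\mu_{1},\ker\mathrm{d}\mu_{2})$. Since $J$ preserves $\ker\mathrm{d}\mu_{i}$ and is invertible, $J^{-n}$ preserves it too, and this vanishes. Finally, the Nijenhuis part follows from the second line of \eqref{eq:poisson-nijenhuis-morita-conditions} applied to the new actions $\pi_{\varpi^{(n)}}^{\sharp}\mu_{i}^{*}=J^{n}\pi_{\varpi}^{\sharp}\mu_{i}^{*}$; the image distributions are unchanged.

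Completeness of the new actions is the one condition I do not expect to obtain, which is why the conclusion is stated as a \emph{weak} Morita equivalence. The new hamiltonian vector fields are $J^{n}$ applied to the old ones, and completeness is not preserved in general.
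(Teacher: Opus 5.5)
Your proposal is correct and is essentially the paper's proof: it uses the same bibundle $(P,\varpi^{(n)},J)$ with $\big(\varpi^{(n)}\big)^{\flat}=\varpi^{\flat}\circ J^{-n}$, the same deformed actions $J^{n}\circ\pi_{\varpi}^{\sharp}\circ\mu_{i}^{*}$, and completeness is the one condition that cannot be guaranteed, whence ``weak''. Two remarks: the step you call the main obstacle needs no detour through $J^{-1}$, because you already observe that the Poisson bivector of $\varpi^{(n)}$ is $J^{n}\pi_{\varpi}$, which is Poisson by the hierarchy of $(P,\pi_{\varpi},J)$, so $\varpi^{(n)}$ is closed (and $\varpi^{(n)}_{J}=\varpi^{(n-1)}$ then gives compatibility with $J$), exactly as in the paper; and the connection condition at level $n$, which you leave unchecked, is just $J^{n}$ applied to the level-$0$ condition, since $\nabla^{J}_{v}(Ju)-J\nabla^{J}_{v}(u)=\mathscr{N}_{J}(u,v)=0$.
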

\begin{remark}
  \label{rmk:completeness_assumption}
  To obtain a (strong) Morita equivalence, this proposition requires the assumption that the infinitesimal actions associated with the hierarchy are complete. These actions are defined by deforming the original actions $\mathscr{a}_{1}$ and $\mathscr{a}_{2}$ with powers of $J$ (as detailed in the construction below) and are not automatically complete. Without this completeness assumption, the structures are only weakly Morita equivalent in the sense of Ginzburg \cite{ginzburg2001grothendieck}.
\end{remark}
 
We outline the construction of the hierarchy of bibundles; the proof proceeds analogously to Proposition \ref{prop:hierarchy-morita}. 

Since $(\varpi,J)$ is a compatible pair and $\varpi$ is symplectic, $(P,\pi_{\varpi},J)$ is a Poisson-Nijenhuis manifold, where $\pi_{\varpi} = - (\varpi^{\flat})^{-1}$. Consequently, there exists a hierarchy of Poisson-Nijenhuis structures $(\pi_{\varpi}^{(n)},J)$ on $P$. Note that each $\pi_{\varpi}^{(n)}$ is a non-degenerate Poisson structure. Indeed, $J^n$ is invertible since $J$ is invertible; furthermore,  $\pi_{\varpi}^{\sharp}$ is an isomorphism by the non-degeneracy of $\varpi$. It follows that the composition $\big(\pi_{\varpi}^{(n)}\big)^{\sharp} = J^n \circ \pi_{\varpi}^{\sharp}$ is invertible. We define the symplectic forms $\varpi^{(n)} \in \Omega^{2}(P)$ by $\big(\varpi^{(n)}\big)^{\flat}\coloneqq -\big(\pi_{\varpi}^{(n)\sharp}\big)^{-1}$, i.e. $\big(\varpi^{(n)}\big)^{\flat} = \varpi^{\flat} \circ J^{-n}$. Finally, the bibundle for the $n$-th level of the hierarchy is given by the bibundle
\[
  \left(M_{1},\pi_{1}^{(n)},r_{1}\right) \xleftarrow{\quad \mu_{1}\quad } \left(P,\varpi^{(n)},J\right) \xrightarrow{\quad \mu_{2}\quad } \left(M_{2},\pi_{2}^{(n)},r_{2}\right).
\]
The infinitesimal actions are defined by deforming the original actions via the tensor field $J^{n}$, which yields the natural actions on the corresponding symplectic realizations of the Poisson manifolds. Let $\mathscr{a}^{(n)}_{i}\colon \Gamma (T^{*}M_{i})\to \mathfrak{X}(P)$ denote the actions of the cotangent algebroids associated to the Poisson manifolds $\big(M_{i},\pi_{i}^{(n)}\big)$. For any $\alpha\in \Omega^{1}(M_{1})$ and $\beta\in \Omega^{1}(M_{2})$, these actions are explicitly given by
\begin{equation}
    \label{eq:actions_poisson_hierarchy}
\mathscr{a}^{(n)}_{1} (\alpha)= \big(\pi_{\varpi}^{(n)}\big)^{\sharp}(\mu_{1}^{*}\alpha) \qquad \text{and}\qquad \mathscr{a}^{(n)}_{2}(\beta) = \big(\pi^{(n)}_{\varpi}\big)^{\sharp}(\mu_{2}^{*}\beta).
\end{equation}
Provided the completeness of these infinitesimal actions, establishing Morita equivalence at the $n$-th level reduces to verifying the remaining conditions for an infinitesimal Morita bibundle. We now proceed to verify these conditions.

The commutativity of the actions $\mathscr{a}_{1}^{(n)}$ and $\mathscr{a}_{2}^{(n)}$ follows directly from the compatibility of the pair $(\pi_{\varpi}, J)$ and the assumption that $J$ is $\mu_{i}$-related to $r_{i}$ for each $i \in \{1, 2\}$. Specifically, we compute
\begin{align*}
    \left[\mathscr{a}^{(n)}_{1}(\alpha), \mathscr{a}^{(n)}_{2}(\beta) \right] & = \left[ J^{n}\big(\pi_{\varpi}^{\sharp}(\mu_{1}^{*}\alpha)\big), J^{n}\big(\pi_{\varpi}^{\sharp}(\mu_{2}^{*}\beta)\big)\right]\\
                                                                              & = \left[ \pi_{\varpi}^{\sharp}\left( \mu_{1}^{*}\left( (r_{1}^{*})^{n}\alpha \right) \right),  \pi_{\varpi}^{\sharp}\left( \mu_{2}^{*}\left( (r_{2}^{*})^{n}\beta \right) \right)\right]\\
                                                                              & = 0.
\end{align*}
Finally, the non-degeneracy of $J$ guarantees that for every $p\in P$, the linear maps $\mathscr{a}_{1\, p}^{(n)}\colon (\mu_{1}^{*}(T^{*}M_{1}))_{p}\to T_{p}P$ and $\mathscr{a}_{2\, p}^{(n)}\colon (\mu_{2}^{*}(T^{*}M_{2}))_{p}\to T_{p}P$ map injectively onto $\operatorname{ker}\mathrm{d}\mu_{2\, p}$ and $\operatorname{ker}\mathrm{d}\mu_{1\, p}$, respectively. 
\begin{remark}[Non-degenerate infinitesimal Morita bibundle]
\label{rmk:morita-poisson-nijenhuis-convenient}
The framework of Proposition \ref{prop:hierarchy-poisson-nij} is most naturally framed in terms of an infinitesimal bibundle endowed with a non-degenerate Poisson bivector and an invertible, compatible Nijenhuis tensor field. Specifically, the infinitesimal Morita equivalence of Poisson--Nijenhuis manifolds is given by
\[
    (M_{1},\pi_{1},r_{1})\xleftarrow{\quad \mu_{1}\quad} (P,\pi_{\varpi},J)\xrightarrow{\quad \mu_{2}\quad } (M_{2},\pi_{2},r_{2}).
\]
In this setting, the triple $(r_{1}, J, r_{2})$ generates the hierarchy of Morita equivalences of Poisson manifolds, yielding the following diagram:
\[
\begin{tikzcd}[column sep=huge]
\left(M_{1},\pi_{1}\right) \arrow[r, "r_{1}"] & \left(M_{1},\pi_{1}^{(1)}\right) \arrow[r, "r_{1}"] & \cdots \arrow[r, "r_{1}"] & \left(M_{1},\pi_{1}^{(n)}\right) \arrow[r, "r_{1}"] & \cdots \\
\left(P,\pi_{\varpi}\right) \arrow[u, "\mu_{1}"] \arrow[d, "\mu_{2}"'] \arrow[r, "J"] & \left(P,\pi_{\varpi}^{(1)}\right) \arrow[u, "\mu_{1}"] \arrow[d, "\mu_{2}"'] \arrow[r, "J"] & \cdots \arrow[r, "J"] & \left(P,\pi_{\varpi}^{(n)}\right) \arrow[u, "\mu_{1}"] \arrow[d, "\mu_{2}"'] \arrow[r, "J"] & \cdots \\
\left(M_{2},\pi_{2}\right) \arrow[r, "r_{2}"] & \left(M_{2},\pi_{2}^{(1)}\right) \arrow[r, "r_{2}"] & \cdots \arrow[r, "r_{2}"] & \left(M_{2},\pi_{2}^{(n)}\right) \arrow[r, "r_{2}"] & \cdots
\end{tikzcd}
\]
We call an infinitesimal bibundle with this structure a \textit{non-degenerate Morita equivalence} of Poisson--Nijenhuis manifolds.
\end{remark}

\subsubsection{Modular vector field of a Poisson-Nijenhuis manifold.}
We now comment on the cohomological consequences arising from the hierarchy of Morita equivalent Poisson--Nijenhuis structures. Henceforth, we consider a non-degenerate infinitesimal Morita bibundle between Poisson--Nijenhuis structures $(M_{1},\pi_{1},r_{1})$ and $(M_{2},\pi_{2},r_{2})$. By Proposition \ref{prop:hierarchy-poisson-nij}, there exists a corresponding hierarchy of infinitesimal bibundles
\[
\left(M_{1},\pi_{1}^{(n)},r_{1}\right) \xleftarrow{\quad \mu_{1}\quad } \left(P,\pi_{\varpi}^{(n)},J\right) \xrightarrow{\quad \mu_{2}\quad } \left(M_{2},\pi_{2}^{(n)},r_{2}\right),
\]
which constitute weak Morita equivalences (as discussed in Remark \ref{rmk:completeness_assumption}). The results of the present subsection remain valid in this weaker setting.

Every Poisson manifold $(M, \pi)$ induces a differential $\mathrm{d}_{\pi}\colon \mathfrak{X}^{k}(M)\to \mathfrak{X}^{k+1}(M)$. The cohomology $H^{\bullet}_{\pi}(M)$ of the complex $(\mathfrak{X}^{\bullet}(M),\mathrm{d}_{\pi})$ is called the \textit{Poisson cohomology} of $(M,\pi)$. It was shown in \cite{ginzburg-lu1992poisson} that Morita equivalent Poisson manifolds $(M_{1},\pi_{1})$ and $(M_{2},\pi_{2})$ have isomorphic first Poisson cohomology groups; that is, $H^{1}_{\pi_{1}}(M_{1}) \cong H^{1}_{\pi_{2}}(M_{2})$. In our setting, we thus obtain a hierarchy of isomorphic first Poisson cohomology groups. To simplify notation, we write $H^{1}_{(n)}(M_{i})\coloneqq H^{1}_{\pi_{i}^{(n)}}(M_{i})$. For every $n\in \mathbb{N}$, it then follows that
\[
  H^{1}_{(n)}(M_{1})\cong H^{1}_{(n)}(M_{2}).
\]

We recall the \textit{modular vector field} of a Poisson manifold $(M, \pi)$. Suppose initially that $M$ is orientable. The divergence of a vector field $v\in \mathfrak{X}(M)$ with respect to a volume form $\nu$ is the unique function $\mathrm{div}_{\nu}(v)\in C^{\infty}(M)$ such that $\mathscr{L}_{v}(\nu) = \mathrm{div}_{\nu}(v)\nu$. For any function $f\in C^{\infty}(M)$, let $X_{f}=\pi^{\sharp}(\mathrm{d}f)$ denote its Hamiltonian vector field. The \textbf{modular vector field} of $(M,\pi)$ with respect to $\nu$ is the Poisson vector field $\mathscr{X}^{\nu} \in \mathfrak{X}(M)$ defined by
\[
  \mathscr{X}_{\nu}(f) \coloneqq \mathrm{div}_{\nu}\left( X_{f} \right).
\]
While this vector field depends on the choice of the volume form, its cohomology class $\left[ \mathscr{X}_{\nu} \right]\in  H^{1}_{\pi}(M)$ does not. We therefore write $[\mathscr{X}]\coloneqq [\mathscr{X}_{\nu}]$, and this class is called the \textbf{modular class} of the Poisson manifold $(M,\pi)$. By replacing volume forms with smooth densities in the definition of the modular vector field, we may drop the orientability requirement on the manifold $M$.  

Under Morita equivalence, these classes correspond via the isomorphism of the first Poisson cohomology groups \cite[Corollary 7]{crainic2003differentiable}. Therefore, every non-degenerate infinitesimal Morita equivalence between Poisson--Nijenhuis manifolds $(M_{1},\pi_{1},r_{1})$ and $(M_{2},\pi_{2},r_{2})$ induces an isomorphism $H^{1}_{(n)}(M_{1}) \cong H^{1}_{(n)}(M_{2})$ for every $n\in \mathbb{N}$. This isomorphism maps the modular class $\big[ \mathscr{X}_{1}^{(n)} \big]$ of $\big( M_{1},\pi_{1}^{(n)} \big)$ to the modular class $\big[ \mathscr{X}_{2}^{(n)} \big]$ of $\big( M_{2},\pi_{2}^{(n)} \big)$.

Damianou and Fernandes \cite{damianou-fernandes2008integrable} introduce a different notion of \textit{modular vector field}, which is intrinsically associated with a Poisson-Nijenhuis manifold $(M, \pi, r)$. This modular vector field, denoted by $\mathscr{Y}_{r}$, is a Poisson vector field relative to the second Poisson structure of the hierarchy. When its associated Poisson cohomology class vanishes, $\mathscr{Y}_{r}$ becomes a bi-Hamiltonian vector field. The \textbf{modular vector field of the Poisson-Nijenhuis structure} is defined using the modular vector fields of $\pi^{(1)}$ and $\pi^{(0)}=\pi$ with respect to an arbitrary volume form $\nu$; specifically, $\mathscr{Y}_r\in \mathfrak{X}(M)$ is given by
\begin{equation}
    \label{eq:modular_PN}
    \mathscr{Y}_{r} = \mathscr{X}^{(1)}_{\nu}- r \left( \mathscr{X}^{(0)}_{\nu} \right).
\end{equation}
This vector field is independent of the choice of volume form $\nu$. In analogy with \cite[Corollary 7]{crainic2003differentiable}, we obtain a correspondence for the modular vector fields of Morita equivalent Poisson-Nijenhuis manifolds.
\begin{theorem}
  \label{thm:modular-vectors}
  Let $(M_{1},\pi_{1},r_{1})$ and $(M_{2},\pi_{2},r_{2})$ be Poisson-Nijenhuis manifolds that are Morita equivalent via a non-degenerate infinitesimal Morita bibundle $(P,\pi_{\varpi},J)$. Then their modular classes $[\mathscr{Y}_{r_{1}}]$ and $[\mathscr{Y}_{r_{2}}]$ correspond under the isomorphism of the first Poisson cohomology groups $H^{1}_{(1)}(M_{1}) \cong H^{1}_{(1)}(M_{2})$.
\end{theorem}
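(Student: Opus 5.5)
The plan is to realize both modular classes $[\mathscr{Y}_{r_{1}}]$ and $[\mathscr{Y}_{r_{2}}]$ as projections of a single vector field on the bibundle $P$. We use the description of the isomorphism $H^{1}_{(1)}(M_{1})\cong H^{1}_{(1)}(M_{2})$ of \cite{ginzburg-lu1992poisson} (as in \cite{crainic2003differentiable}). For $n=1$, Proposition \ref{prop:hierarchy-poisson-nij} gives the weak Morita equivalence $(M_{1},\pi_{1}^{(1)})\xleftarrow{\mu_{1}}(P,\pi_{\varpi}^{(1)})\xrightarrow{\mu_{2}}(M_{2},\pi_{2}^{(1)})$. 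Under it, a class $[X_{1}]\in H^{1}_{(1)}(M_{1})$ goes to $[X_{2}]\in H^{1}_{(1)}(M_{2})$ as soon as there is a Poisson vector field $Z$ on $(P,\pi_{\varpi}^{(1)})$ that is $\mu_{1}$-related to $X_{1}$ and $\mu_{2}$-related to $X_{2}$, up to Hamiltonian vector fields. It therefore suffices to produce such a $Z$ for $X_{i}=\mathscr{Y}_{r_{i}}$.

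The natural candidate is the intrinsic modular vector field $\mathscr{Y}_{J}=\mathscr{X}^{(1)}_{\nu_{P}}-J(\mathscr{X}^{(0)}_{\nu_{P}})$ of the Poisson--Nijenhuis manifold $(P,\pi_{\varpi},J)$. By \eqref{eq:modular_PN} and the independence of the volume form, it is a Poisson vector field for $\pi_{\varpi}^{(1)}$ and does not depend on $\nu_{P}$. This freedom lets us pick an adapted density: locally write $\nu_{P}=\mu_{1}^{*}\nu_{1}\otimes\lambda_{1}$ with $\lambda_{1}$ a fiberwise density on $\ker\mathrm{d}\mu_{1}$, and compute $\mathscr{X}^{(n)}_{\nu_{P}}$ for $n=0,1$ following the proof of \cite[Corollary 7]{crainic2003differentiable}. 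The expected outcome is that $\mathscr{X}^{(n)}_{\nu_{P}}$ is $\mu_{1}$-related to $\mathscr{X}^{(n)}_{1,\nu_{1}}$ up to a correction coming from the divergence along the fibers. That correction should be the $\pi_{\varpi}^{(n)}$-Hamiltonian vector field of a log-density function $g_{1}$ comparing $\nu_{P}$ with the Liouville density of $\varpi^{(n)}$.

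Combining the two levels, and using $\mathrm{d}\mu_{1}\circ J=r_{1}\circ\mathrm{d}\mu_{1}$ from \eqref{eq:poisson-nijenhuis-morita-conditions}, we expect $\mathscr{Y}_{J}$ to be $\mu_{1}$-related to $\mathscr{Y}_{r_{1}}$ up to a term of the form $(\pi^{(1)}_{\varpi})^{\sharp}\mathrm{d}h_{1}-J\big(\pi_{\varpi}^{\sharp}\mathrm{d}h_{1}'\big)$. Since $(\pi_{\varpi}^{(1)})^{\sharp}=J\circ\pi_{\varpi}^{\sharp}$, this term collapses to a $\pi_{\varpi}^{(1)}$-Hamiltonian vector field, provided the two log-density corrections differ by a function whose $\pi_{\varpi}^{(1)}$-Hamiltonian field is $\mu_{1}$-projectable. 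Here we use the invertibility of $J$. The Liouville densities of $\varpi$ and $\varpi^{(1)}=\varpi\circ J^{-1}$ differ by $|\det J|^{-1/2}$. By the Nijenhuis condition, $\mathrm{d}\log|\det J|$ is expressible through $\mathrm{d}(\operatorname{tr}J)$ (one has $J^{*}\mathrm{d}\operatorname{tr}J=\mathrm{d}\operatorname{tr}J^{2}/2$), which makes these Hamiltonian corrections explicit. The same argument with $\mu_{2}$ and a fiberwise density on $\ker\mathrm{d}\mu_{2}$ gives the relation with $\mathscr{Y}_{r_{2}}$, so the same $Z=\mathscr{Y}_{J}$ works on both sides, after absorbing Hamiltonian terms.

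The main obstacle is this bookkeeping of fiberwise divergences. We must check that the trace of $J$ restricted to $\ker\mathrm{d}\mu_{i}$, which is well defined because $J$ preserves the fibers by $\mu_{i}$-relatedness, enters $\mathscr{X}^{(1)}_{\nu_{P}}-J(\mathscr{X}^{(0)}_{\nu_{P}})$ only through a genuinely $\pi_{\varpi}^{(1)}$-Hamiltonian term. We must also check that no non-projectable, non-Hamiltonian remainder survives. I would first attempt this in local Darboux-type coordinates adapted to the two $\varpi$-orthogonal foliations $\ker\mathrm{d}\mu_{1}$ and $\ker\mathrm{d}\mu_{2}$. The key input is that $J$ is block-compatible with these foliations (the second line of \eqref{eq:poisson-nijenhuis-morita-conditions}), so that $\operatorname{tr}J=\mu_{1}^{*}\operatorname{tr}r_{1}+\mu_{2}^{*}\operatorname{tr}r_{2}$. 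With this decomposition, the divergence corrections split into pieces pulled back from $M_{1}$ and $M_{2}$, and the remaining terms cancel by the compatibility \eqref{eq:poisson-nijenhuis1}. That gives the required correspondence of $[\mathscr{Y}_{r_{1}}]$ and $[\mathscr{Y}_{r_{2}}]$.
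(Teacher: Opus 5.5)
Your criterion for transporting classes is wrong as stated. The Ginzburg--Lu isomorphism is computed with \emph{Hamiltonian} lifts on $P$; with merely Poisson lifts the correspondence is not well defined. For a closed $1$-form $\beta$ on $M_{2}$, consider $Z=(\pi_{\varpi}^{(1)})^{\sharp}(\mu_{2}^{*}\beta)=J\,\mathscr{a}_{2}(\beta)=\mathscr{a}_{2}(r_{2}^{*}\beta)$. This field is $\pi_{\varpi}^{(1)}$-Poisson and tangent to the $\mu_{1}$-fibres, so it is $\mu_{1}$-related to $0$. It is $\mu_{2}$-related to $\pm(\pi_{2}^{(1)})^{\sharp}\beta$, whose class in $H^{1}_{(1)}(M_{2})$ can be nonzero. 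Take, e.g., $P$ the fundamental groupoid of a symplectic torus, $r_{i}=J=\mathrm{id}$ and $[\beta]\neq 0$. So you must show that $\mathscr{Y}_{J}$ is $\pi_{\varpi}^{(1)}$-Hamiltonian, and the proposal never does. This is true, and your own remark on Liouville densities gives it at once. Compute $\mathscr{Y}_{J}$ with the Liouville density of $\varpi$: then $\mathscr{X}^{(0)}=0$, hence $\mathscr{Y}_{J}=\mathscr{X}^{(1)}=\pm\frac{1}{2}(\pi_{\varpi}^{(1)})^{\sharp}\mathrm{d}\log|\det J|$.

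The other gap is the step you flag as the main obstacle. The claim that $\mathscr{Y}_{J}$ is $\mu_{i}$-related to $\mathscr{Y}_{r_{i}}$ modulo $\pi_{i}^{(1)}$-Hamiltonian fields carries all the content, and it is only ``expected''. The trace splitting and Darboux computation in your last paragraph cannot supply it, because they describe only $\mathscr{Y}_{J}$, not $\mathscr{Y}_{r_{1}}$. Your block argument also gives $\det J=\mu_{1}^{*}\det r_{1}\cdot\mu_{2}^{*}\det r_{2}$. Hence $\mathrm{d}\mu_{1}(\mathscr{Y}_{J})=\pm\frac{1}{2}(\pi_{1}^{(1)})^{\sharp}\mathrm{d}\log|\det r_{1}|$ is itself Hamiltonian. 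Your claim is therefore equivalent to $[\mathscr{Y}_{r_{1}}]=0$ in $H^{1}_{(1)}(M_{1})$, a statement about $(M_{1},\pi_{1},r_{1})$ alone, where $\pi_{1}$ may be degenerate. That statement does hold here: invertibility of $J$ forces $r_{1}$ to be invertible, so $r_{1}^{*}$ is a Lie algebroid isomorphism from the cotangent algebroid of $\pi_{1}^{(1)}$ to that of $\pi_{1}$. Naturality of modular classes then gives $[\mathscr{X}_{1}^{(1)}]=[r_{1}\mathscr{X}_{1}^{(0)}]$. But this needs an argument on $M_{1}$ that your plan does not contain.

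The paper uses the same skeleton, a lift of the form ``level $1$ minus $J$ of level $0$'' on $P$, but avoids all density bookkeeping. It takes the Hamiltonian lifts $X_{H_{0}}, X_{H_{1}}$ of $\mathscr{X}_{1}^{(0)}, \mathscr{X}_{1}^{(1)}$ supplied by Crainic's result at levels $0$ and $1$. Using $J\circ\pi_{\varpi}^{\sharp}=(\pi_{\varpi}^{(1)})^{\sharp}$, it observes that $X_{H_{1}}-J(X_{H_{0}})=(\pi_{\varpi}^{(1)})^{\sharp}\mathrm{d}(H_{1}-H_{0})$ is a Hamiltonian lift of $\mathscr{Y}_{r_{1}}$ by construction. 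On $M_{2}$, the identity $r_{2}\circ\pi_{2}^{\sharp}=(\pi_{2}^{(1)})^{\sharp}$ merges the two corrections into $(\pi_{2}^{(1)})^{\sharp}\mathrm{d}(g_{1}-g_{0})$. So neither your proviso on the log-density corrections nor any trace or determinant identity is needed.
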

To prove this theorem, we briefly recall the construction of the isomorphism $H_{\pi_1}^1(M_1) \to H_{\pi_2}^1(M_2)$ following Ginzburg and Lu \cite{ginzburg-lu1992poisson}. Any Poisson vector field $v_1 \in \mathfrak{X}(M_1)$ lifts to a Hamiltonian vector field $X_H$ on the symplectic manifold $P$ that is $\mu_{1}$-related to $v_{1}$. This lift projects to a well-defined Poisson vector field $v_2 = \mathrm{d}\mu_{2}( X_H)$ on $M_2$, whose resulting cohomology class $[v_2] \in H_{\pi_2}^1(M_2)$ is independent of the choice of $H$. Furthermore, if $v_1 = X_h$ is a Hamiltonian vector field on $M_1$, we may choose the explicit lift $H = \mu_1^*h$; since the associated vector field $X_{\mu_1^*h}$ is tangent to the $\mu_2$-fibers, it projects to zero under $\mathrm{d}\mu_{2}$. Consequently, the assignment $v_1 \mapsto [v_2]$ maps exact elements to zero, thereby descending to a well-defined homomorphism in cohomology $H_{\pi_1}^1(M_1) \to H_{\pi_2}^1(M_2)$. By symmetry, the analogous construction from $M_2$ to $M_1$ yields the inverse map, establishing the isomorphism $H_{\pi_1}^1(M_1) \cong H_{\pi_2}^1(M_2)$.

Following Crainic \cite{crainic2003differentiable}, applying this construction to the modular vector field $v_1 = \mathscr{X}_{1}$ of $(M_1,\pi_{1})$ yields the projected vector field $v_2 = \mathscr{X}_{2} + X_g$ on $M_2$, where $\mathscr{X}_{2}$ is the modular vector field of $(M_2,\pi_{2})$ and $X_g$ is a Hamiltonian vector field. Consequently, the isomorphism induced by the Hamiltonian lift $X_H$ identifies the corresponding modular cohomology classes, mapping $[\mathscr{X}_{1}]$ to $[\mathscr{X}_{2}]$.

\begin{proof}[Proof of Theorem \ref{thm:modular-vectors}]
    Suppose that the functions $H_0, H_1 \in C^{\infty}(P)$ generate Hamiltonian vector fields $X_{H_{0}}$ and $X_{H_{1}}$ that lift the modular vector fields $\mathscr{X}_{1}^{(0)}$ and $\mathscr{X}_{1}^{(1)}$ on the Poisson manifolds $\big(M_{1},\pi_{1}^{(0)}\big)$ and $\big(M_{1},\pi_{1}^{(1)}\big)$, respectively. We assume that $X_{H_{0}}$ and $X_{H_{1}}$ project via the differential $\mathrm{d}\mu_{2}$ to $\mathscr{X}_{2}^{(0)}+X_{g_{0}}$ and $\mathscr{X}_{2}^{(1)}+X_{g_{1}}$, respectively, for some smooth functions $g_{0},g_{1}\in C^{\infty}(M_{2})$.

    Since $\mathscr{Y}_{r_{1}}$ and $\mathscr{Y}_{r_{2}}$ are Poisson vector fields on $\big(M_{1},\pi^{(1)}_{1}\big)$ and $\big(M_{2},\pi_{2}^{(1)}\big)$, respectively, relating their cohomology classes requires an intermediate Hamiltonian vector field on $P$ with respect to the symplectic structure $\big(\pi_{\varpi}^{(1)}\big)^{\sharp} \coloneqq J\circ \pi_{\varpi}^{\sharp}$. Observe that the vector field $X_{H_{1}}- J(X_{H_{0}}) \in \mathfrak{X}(P)$ is Hamiltonian on the non-degenerate Poisson manifold $(P,\pi_{\varpi}^{(1)})$ with Hamiltonian function $H \coloneqq H_{1}-H_{0}$. Indeed, we have
\[
    X_{H} = \big(\pi_{\varpi}^{(1)}\big)^{\sharp}(\mathrm{d}(H_{1}-H_{0})) = \big(\pi_{\varpi}^{(1)}\big)^{\sharp}(\mathrm{d}H_{1}) - J\circ(\pi_{\varpi})^{\sharp}(\mathrm{d}H_{0}) = X_{H_{1}}-J(X_{H_{0}}).
\]
We claim that $X_{H}$ projects under $\mathrm{d}\mu_{2}$ to a vector field in the same cohomology class as $\mathscr{Y}_{r_{2}}$. Recalling that $J$ is $\mu_{2}$-related to $r_{2}$, we compute
\begin{align*}
    \mathrm{d}\mu_{2}(X_{H}) &= \mathrm{d}\mu_{2}(X_{H_{1}}) - \mathrm{d}\mu_{2}(J(X_{H_{0}})) \\
                             &= \mathscr{X}_{2}^{(1)} + X_{g_{1}} - r_{2}(\mathrm{d}\mu_{2}(X_{H_{0}})) \\
                             &= \mathscr{X}_{2}^{(1)} + X_{g_{1}} - r_{2}\big( \mathscr{X}_{2}^{(0)} + X_{g_{0}}\big) \\
                             &= \mathscr{X}_{2}^{(1)} - r_{2}\big(\mathscr{X}_{2}^{(0)}\big) + X_{g_{1}} - r_{2}(X_{g_{0}}).
\end{align*}
    We observe that $X_{g_{1}}-r_{2}(X_{g_{0}})$ is a Hamiltonian vector field on $\big(M_{2},\pi_{2}^{(1)}\big)$, since $X_{g_{1}}-r_{2}(X_{g_{0}}) = (\pi_{2}^{(1)})^{\sharp}(\mathrm{d}(g_{1}-g_{0}))$. Consequently, the cohomology class of $\mathrm{d}\mu_{2}(X_{H})$ in $H_{(1)}^{1}(M_{2})$ equals that of $\mathscr{X}_{2}^{(1)} - r_{2}\big(\mathscr{Y}_{2}^{(0)}\big)$; that is, $[\mathrm{d}\mu_{2}(X_{H})] = \big[\mathscr{Y}_{r_{2}}\big]$.   
\end{proof}
We remark that the above correspondence extends to the full hierarchy. That is, for every $n\in \mathbb{N}$, the hierarchy of cohomology classes for the modular vector fields $\mathscr{Y}^{(n)}_{r_{1}}$ and $\mathscr{Y}^{(n)}_{r_{2}}$ is in correspondence: the isomorphism $H^{1}_{(n+1)}(M_{1}) \cong H^{1}_{(n+1)}(M_{2})$ maps the modular class $\big[ \mathscr{Y}^{(n)}_{r_{1}} \big]$  to the modular class $\big[ \mathscr{Y}_{r_{2}}^{(n)} \big]$ .

\printbibliography
\end{document}